\documentclass[11pt]{article}
    \usepackage{url}
    \usepackage{verbatim}
    \usepackage[titletoc]{appendix}
    \usepackage{graphicx}
    \textwidth=6.5in
    \textheight=9.00in
    \footskip=0.5in
    \oddsidemargin=0in
    \topmargin=-0.5in

	\usepackage{amsmath}
	\usepackage{amsthm}
	\usepackage{amsfonts}
	\usepackage{graphicx}
    \usepackage{nicefrac}
    \usepackage{longtable}
    \usepackage{color}

    \newcommand{\norm}[2]{\left\| #1 \right\|_{#2}}
    \newcommand{\iprd}[3]{\left( #1 , #2 \right)_{#3}}
    \newcommand{\hf}{\frac{1}{2}}
    \newcommand{\msfJ}{\mathsf J}

    \newcommand{\Dh}{\Delta_h}

    \newcommand{\nrm}[1]{\left\| #1 \right\|}
    \newcommand{\eip}[2]{\left[ #1 \middle| #2 \right]}
    \newcommand{\cip}[2]{\left( #1 \middle| #2 \right)}
    \newcommand{\eipns}[2]{\left[ #1 \middle\| #2 \right]_{\rm ns}}
    \newcommand{\eipew}[2]{\left[ #1 \middle\| #2 \right]_{\rm ew}}
    
    \newcommand{\ciptwo}[2]{\left( #1 \middle\| #2 \right)}
    \newcommand{\cipthree}[2]{\left( #1 \middle| \! \middle| \! \middle| #2 \right)}
    \newcommand{\convone}[2]{\left[ #1 \star #2 \right]}
    \newcommand{\convtwo}[2]{\left[ #1 \star #2 \right]}

    \def\g{\mbox{\boldmath $g$}}
    \def\h{\mbox{\boldmath $h$}}
    
	\newtheorem{thm}{Theorem}[section]
	\newtheorem{prop}[thm]{Proposition}
	\newtheorem{cor}[thm]{Corollary}
	
	\newtheorem{lem}[thm]{Lemma}
	\newtheorem{rem}[thm]{Remark}

	\begin{document}
	\title{Convergence Analysis for Second Order Accurate Convex Splitting Schemes for the Periodic Nonlocal Allen-Cahn and Cahn-Hilliard Equations}
	
	\author{
Zhen Guan\thanks{Department of Mathematics, The University of California, Irvine, CA 92697, USA  (zguan2@math.uci.edu)}
	\and
John Lowengrub\thanks{Department of Mathematics, The University of California, Irvine, CA 92697, USA (lowengrb@math.uci.edu)}
	\and
Cheng Wang \thanks{Department of Mathematics, The University of Massachusetts, Dartmouth, MA 02747, USA (Corresponding Author: cwang1@umassd.edu)} }

	\maketitle
	\numberwithin{equation}{section}

	\begin{abstract}
In this paper we provide a detailed convergence analysis for fully discrete second order (in both time and space) numerical schemes for nonlocal Allen-Cahn (nAC) and nonlocal Cahn-Hilliard (nCH) equations. The unconditional unique solvability and energy stability ensures $\ell^4$ stability. The convergence analysis for the nAC equation follows the standard procedure of consistency and stability estimate for the numerical error function. 
For the nCH equation, due to the complicated form of the nonlinear term, a careful expansion of its discrete gradient is undertaken and an $H^{-1}$ inner product estimate of this nonlinear numerical error is derived to establish convergence. 
In addition, an a-priori $W^{1,\infty}$ bound of the numerical solution at 
the discrete level is needed in the error estimate. Such a bound can be 
obtained by performing a higher order consistency analysis by using asymptotic expansions for the numerical solution. Following the technique originally proposed by Strang (e.g., 1964), instead of the standard comparison between the exact and numerical solutions, an error estimate between the numerical solution and the constructed approximate solution yields an $O( s^3 + h^4)$ convergence in $\ell^\infty (0, T; \ell^2)$ norm, which leads to the necessary bound under a standard constraint $s \le C h$. Here, we also prove convergence of the scheme in the maximum norm under the same constraint. 
	\end{abstract}

	\section{Introduction}
	
	In this paper our primary goal is to develop a detailed convergence analysis of fully discrete second order (in both time and space) numerical schemes for integro-partial-differential equations:
	\begin{equation}
     \partial_t \phi = - M w ,  \, \, \, M \ge 0 ,   
     \label{pde-non conserve}
     \end{equation}
and 
   \begin{equation}
      \partial_t \phi = \Delta w, \label{pde-conserve}
	\end{equation}
where
	\begin{equation}
	\label{chemical_potential}
w := \delta_{\phi} E = \phi^3 +\gamma_c \phi -\gamma_e \phi + (\msfJ*1)\phi - \msfJ*\phi ,
	\end{equation}
and $\msfJ$ is a convolution kernel. These equations are defined in $(0,T] \times \Omega$, with the initial condition
\begin{equation}
\phi(0, x ) = \phi_0 (x) ,
\end{equation}
where $\Omega$ is a rectangular domain in $\mathbb{R}^n$. The solution $\phi(x,t)$ is $\Omega$ -- periodic in space. Suppose  $\Omega = (0,L_1)\times(0,L_2) \subset\mathbb{R}^2$. Define $C^m_{\rm per}(\Omega) = \left\{ f \in C^m\left(\mathbb{R}^2\right)  \ \middle| \ f \mbox{ is $\Omega$ -- periodic}\right\}$, where $m$ is a positive integer.  Set $C^\infty_{\rm per}(\Omega) = \bigcap_{m=1}^\infty C^m_{\rm per}(\Omega)$. For any $1\le q<\infty$, define $L^q_{\rm per}(\Omega) = \left\{ f \in L_{\rm loc}^q\left(\mathbb{R}^2\right)  \ \middle| \ f \mbox{ is $\Omega$ -- periodic}\right\}$. Of course, $L^q(\Omega)$ and $L_{\rm per}^q(\Omega)$ can be identified in a natural way.  Suppose that the convolution kernel $\msfJ:\mathbb{R}^2 \to \mathbb{R}$ satisfies
	\begin{description}
	\label{condition-J}
 	\item
A1. $\msfJ = \msfJ_c-\msfJ_e$, where $\msfJ_c,\, \msfJ_e \in C_{\rm per}^\infty\left(\Omega\right)$ are non-negative.
	\item
A2. $\msfJ_c$ and $\msfJ_e$ are even, \emph{i.e.}, $\msfJ_\alpha(x_1,x_2)=\msfJ_\alpha(-x_1,-x_2)$, for all $x_1,x_2\in \mathbb{R}$,  $\alpha = c$, $e$.
	\item
A3. $\int_\Omega \msfJ({\bf x})\, d{\bf x} > 0$.
	\end{description}
Given $\phi \in L_{\rm per}^2(\Omega)$, by $\msfJ*\phi$ we mean the circular, or periodic, convolution defined as
	\begin{equation}
\left(\msfJ*\phi\right)({\bf x}) := \int_\Omega \msfJ({\bf y}) \, \phi({\bf x}-{\bf y}) \, d{\bf y} = \int_\Omega \msfJ({\bf x}-{\bf y}) \, \phi({\bf y}) \, d{\bf y} .
	\end{equation}
Clearly $\msfJ*1=\int_\Omega \msfJ({\bf x})\, d{\bf x}$ is a positive constant. We refer to Eqs.~\eqref{pde-non conserve} and~\eqref{pde-conserve} as the nonlocal Allen-Cahn (nAC) and nonlocal Cahn-Hilliard (nCH) equations, respectively. For any $\phi\in L_{\rm per}^4(\Omega)$, we define
	\begin{equation}
E(\phi) = \frac{1}{4}\norm{\phi}{L^4}^4 + \frac{\gamma_c-\gamma_e+\msfJ*1}{2}\norm{\phi}{L^2}^2 -\frac{1}{2}\iprd{\phi}{\msfJ*\phi}{L^2} ,
	\label{non-local-energy}
	\end{equation}
where $\gamma_c,\, \gamma_e \ge 0$ are constants.  Typically, one of $\gamma_c$ and $\gamma_e$ is zero.   The energy (\ref{non-local-energy}) is equivalent to 
	\begin{equation}\label{energy}
E(\phi) = \int_\Omega\left\{\frac{1}{4}\phi^4+ \frac{\gamma_c-\gamma_e}{2}\phi^2 + \frac{1}{4}\int_\Omega \msfJ({\bf x}-{\bf y})\left(\phi({\bf x})-\phi({\bf y})\right)^2d{\bf y} \right\}d{\bf x} .
	\end{equation} 
The term $w$ defined in \eqref{chemical_potential} is the chemical potential (variational derivative) relative to the energy (\ref{energy}). Formally, periodic solutions of the nAC and nCH equations, Eq.~(\ref{pde-non conserve}) and Eq.~(\ref{pde-conserve}), dissipate $E$ at the rate $d_t E(\phi) = - M \| w \|_{L^2}^2$, $d_t E(\phi) = -\norm{\nabla w}{L^2}^2$, respectively.  Eqs.~\eqref{pde-non conserve} and~\eqref{pde-conserve} can also be rewritten as
	\begin{eqnarray} 
    &&  
\partial_t \phi = - M ( a(\phi) \phi  - \msfJ*\phi ) ,  
  \quad  \mbox{(nAC)} ,	
\\
	  &&  
\partial_t \phi = \nabla\cdot\left(a(\phi)\nabla \phi \right)  -\left(\Delta \msfJ\right)*\phi ,  
  \quad  \mbox{(nCH)} ,
	\end{eqnarray}
where
	\begin{equation}
a(\phi) = 3\phi^2 +\gamma_c-\gamma_e + \msfJ*1  .
	\end{equation}
We refer to $a(\phi)$ as the diffusive mobility, or just the diffusivity.  To make Eq.~(\ref{pde-conserve}) positive diffusive (and non-degenerate), we require~\cite{bates05a}
	\begin{equation}
\gamma_c-\gamma_e +\msfJ*1 =: \gamma_0 > 0  ,
	\label{gamma-0}
	\end{equation}
in which $a(\phi) >0$.  We will assume that (\ref{gamma-0}) holds in the sequel. 

Eq.~(\ref{pde-conserve}) can also be viewed as a special case of a more general form of nonlocal Cahn-Hilliard equation which is defined as the following:

	\begin{equation}
	\label{pde-conserve-intro}
\partial_t \phi = \nabla\cdot \left(M(\phi)\nabla w_{\text{general}} \right) ,
	\end{equation}
where $M(\phi)>0$ is the mobility, $w_{\text{general}}=\delta_{\phi}E_{\text{general}}$ is the chemical potential and $E_{\text{general}}$ is the nonlocal interaction energy defined as
\begin{equation}\label{energy-intro}
 E_{\text{general}}=\int_{\Omega} \left(F(\phi) - \frac{1}{2}\int_{\Omega} \msfJ({\bf x}-{\bf y})\phi({\bf x})\phi({\bf y}) d{\bf y}\right) d{\bf x} .
\end{equation}
Here we take $\phi \in C^\infty_{\rm per}(\Omega)$ in the sequel, and $F$ is the local energy density, which is usually nonlinear. One important example of  the energy density $F$ is 
\begin{equation}
F(\phi)=\frac{1}{4}\phi^4+\frac{\tilde{\gamma}_c-\tilde{\gamma}_e}{2}\phi^2 ,
\end{equation}
where $\tilde{\gamma}_c,\, \tilde{\gamma}_e \ge 0$ are constants.
By rewriting the nonlocal part of $E_{\text{general}}$ as
\begin{equation}
- \frac{1}{2} \int_{\Omega} \int_{\Omega} \msfJ({\bf x}-{\bf y})\phi({\bf x})\phi({\bf y}) d{\bf y} d{\bf x} = \frac{1}{4}\int_\Omega\int_\Omega \msfJ({\bf x}-{\bf y})\left(\phi({\bf x})-\phi({\bf y})\right)^2d{\bf y} d{\bf x} - \int_\Omega\frac{\msfJ*1}{2}\phi({\bf x})d{\bf y} , 
\end{equation}
and
\begin{equation}
\tilde{\gamma}_c-\tilde{\gamma}_e = \gamma_c-\gamma_e + \msfJ*1, 
\end{equation}
we can obtain Eq.~\eqref{pde-conserve}. Another important example is a regular solution model with free energy
\begin{equation}\label{entropy-eq}
 F(\phi) := \theta [\phi \log (\phi) + (1-\phi) \log (1-\phi)] - 2\theta_c \phi(1-\phi),
\end{equation}
where $\theta$ and $\theta_c$ represent the absolute and critical temperatures, respectively~\cite{barrett99, elder02, greenwood10, kim04}.

Eq.~\eqref{pde-conserve-intro} is the nonlocal version of the classical Cahn-Hilliard (CH) equatios, for which the free energy is~\cite{allen79, cahn61, cahn58}:
\begin{equation}
	\label{eq:G-L-potential}
E_{\text{local}}(\phi) = \int_{\Omega}\left(G(\phi) + \frac{\epsilon^2}{2}|\nabla \phi|^2\right) d{\bf x} , 
	\end{equation}
and the dynamical equation becomes 
	\begin{equation}
	\label{eq:classical-CH}
\partial_t\phi =  \nabla \cdot \left(M(\phi) \nabla \delta_{\phi}  E_{\text{local}} \right) = \nabla \cdot \left(M(\phi) \nabla \left(G'(\phi) - \epsilon^2 \Delta \phi\right) \right) .
	\end{equation}
The connection between two energies can be seen as follows. The energy $E_{\text{general}}$ can be written as
	\begin{equation}
E_{\text{general}}(\phi) = \iprd{F(\phi)}{1}{L^2} -\frac{1}{2}\iprd{\phi}{\msfJ*\phi}{L^2} .
	\label{non-local-general}
	\end{equation}
The term $E_{\text{local}}(\phi)$ is obtained by approximating the nonlocal energy $E_{\text{general}}$~\cite{archer04a, bates06b, evans79, horntrop01, wise08}. Specifically, one takes the approximation $\left(\msfJ*\phi\right) \approx \msfJ_0 \phi + \frac{1}{2}\msfJ_2 \Delta \phi$ where $\msfJ_0 = \int_{\Omega} \msfJ({\bf x}) d{\bf x}$ and $\msfJ_2 = \int_{\Omega} \msfJ({\bf x})|{\bf x}|^2 \, d{\bf x}$ ($\msfJ_2$ is the second moment of $\msfJ$). Under the assumption of periodic boundary conditions, we have the following approximation: 
	\begin{equation}
\frac{1}{2}\iprd{\phi}{\msfJ*\phi}{L^2} \approx \frac{1}{2}\iprd{\phi}{\msfJ_0 \phi + \frac{1}{2}J_2 \Delta \phi}{L^2} = \frac{\msfJ_0}{2}\iprd{\phi^2}{{\bf 1}}{L^2} + \frac{\msfJ_2}{4} \int_{\Omega} |\nabla \phi |^2 d{\bf x} \ .
	\end{equation} 
Thus one can obtain $E_{\text{local}}$ with $G(\phi)=F(\phi)-\frac{\msfJ_0}{2}\phi^2$ and $\epsilon^2 = \frac{\msfJ_2}{2}$. 

General nCH and nAC equations have been widely used in many fields ranging from physics and material science to biology, finance and image processing. In materials science, Eqs.~(\ref{pde-conserve-intro}), (\ref{eq:classical-CH}) and other closely related equations arise as mesoscopic models of interacting particle systems~\cite{archer04b, horntrop01, demasi94b}. Eq.~\eqref{pde-non conserve} is used to model phase transition~\cite{bates97}. In Dynamic Density Functional Theory (DDFT)~\cite{archer04a, archer04b}, the interaction kernel $\msfJ=c^{(2)}(x,y|\phi_{ref})$ is the two-particle direct correlation function, $\phi$ represents the mesoscopic particle density and $\phi_{ref}$ is the average density. See~\cite{evans79, wise10b, likos07, marconi99, rogers91, rogers96, wise08} for further details. In biology, $\msfJ$ has been used to model interactions among cells and extracellular matrix~\cite{armstrong06, armstrong09, chauviere12, gerisch08}. In mathematical models of finance, $\msfJ$ arises from an expectation taken with respect to a particular measure which is used in the model for option pricing~\cite{merton76}. In the modeling for image segmentation with nCH equation, $\msfJ$ is interpreted as the attracting force~\cite{gajewski05, gajewski03}. Readers are referred to~\cite{bates06a, bates05b, bates05a, caginalp11, gajewski03, giacomin97, giacomin98} for theoretical studies of general nCH equations , and~\cite{bates97, bates06b, fife03} for general nAC equations.

There are a few works dedicated to numerical simulation of, or numerical methods for, equations like (\ref{pde-non conserve}) and (\ref{pde-conserve}).   Anitescu~\emph{et al.}~\cite{anitescu04} considered an implicit-explicit time stepping framework for a nonlocal system modeling turbulence, where, as here, the nonlocal term is treated explicitly.  References~\cite{du12a,zhou10} address finite element approximations (in space) of nonlocal peridynamic  equations with various boundary conditions.  In~\cite{bates09}, a  finite difference  method for Eq.~(\ref{pde-non conserve}) with non-periodic boundary conditions is applied and analyzed.   Reference~\cite{hartley09} uses a spectral-Galerkin method to solve a nonlocal Allen-Cahn type problem, like Eq.~\eqref{pde-non conserve}, but with a stochastic noise term and equation modeling heat flow.  For other references for equations like (\ref{pde-conserve}), see~\cite{abukhdeir11, gajewski05, horntrop01, sachs08}. A first order convex splitting for Eq.~\eqref{pde-conserve} scheme was introduced and analyzed in \cite{guan12a}. 
A second order convex splitting scheme for the general system Eq.~\eqref{pde-conserve-intro} was introduced in \cite{guan12b};  
its unconditional energy stability and unique solvability were presented. 

Here, we present a detailed convergence analysis of fully discrete second order convex splitting schemes using the specific forms of Eqs.~\eqref{pde-non conserve} and \eqref{pde-conserve}. We prove convergence in both the $\ell^2$ and $\ell^\infty$ spatial norms. We note that this convergence analysis is much more challenging 
than that of the first order scheme, primarily due to its complicated form for the 
nonlinear term and lack of higher order diffusion term. The present work focuses on the 2D case but can be straightforwardly extended to 3D. Although the result can not be easily extended to more general nonlinear local densities $F$, our results are still useful because polynomial local density functions are widely used. 

The outline of the paper is as follows. Some preliminary estimates needed for the convergence proofs are summarized in Section~\ref{sec-conv-ineq}. 
In Section~\ref{sec-scheme} we define the fully discrete second order scheme and give some of its basic properties, including energy stability and unique solvability. The second order convergence analysis for the nAC equation is presented in Section~\ref{sec-convergence-nAC}. In Section~\ref{sec-consistency} we provide a higher order consistency analysis for the nCH equation, up to order $O (s^3 + h^4 )$. In Section~\ref{sec-convergence} we give the details of the convergence analyses for the nCH equation, in both $\ell^\infty(0, T; \ell^2)$ 
and $\ell^\infty (0, T; \ell^\infty )$ norms. Finally, some numerical results are presented in Section~\ref{sec-numerical results}, that confirm convergence of the schemes. 

	\section{The discrete periodic convolution and useful inequalities}
        \label{sec-conv-ineq}

Here we define a discrete periodic convolution operator on a 2D periodic grid.  We need two spaces: ${\mathcal C}_{m\times n}$ is the space of cell-centered grid (or lattice) functions, and $\mathcal V_{m\times n}$ is the space of vertex-centered grid functions.  The precise definitions can be found in App.~\ref{app-discrete-space}.  The spaces and the following notations have straightforward extensions to three dimensions.  Suppose  $\phi\in \mathcal C_{m\times n}$ is periodic and $f \in \mathcal V_{m\times n}$ is periodic.  Then the discrete convolution operator $\convtwo{f}{\phi}: {\mathcal V}_{m \times n} \times {\mathcal C}_{m\times n} \rightarrow {\mathcal C}_{m\times n}$ is defined component-wise as
	\begin{equation}
\convtwo{f}{\phi} _{i,j} := h^2\sum^{m}_{k=1}\sum^{n}_{l=1} f_{k+\frac{1}{2},l+\frac{1}{2}}\phi_{i-k,j-l} \ .
	\end{equation}
Note very carefully that the order is important in the definition of the discrete convolution $\convtwo{\, \cdot \, }{\, \cdot \, }$. The next result follows from the definition of the discrete convolution and simple estimates.  The proof is omitted.

	\begin{lem}
If $\phi, \psi \in \mathcal{C}_{m \times n}$ are periodic and $f \in \mathcal{V}_{m \times n}$ is periodic and even, \emph{i.e.}, $f_{i+\hf,j+\hf} = f_{-i+\hf,-j+\hf}$, for all $i,j\in \mathbb{Z}$, then
	\begin{equation}
\ciptwo{\phi}{\convtwo{f}{\psi}} = \ciptwo{\psi}{\convtwo{f}{\phi}}  .
	\label{lemma:exchange-2d}
	\end{equation}
If, in addition, $f$ is non-negative, then 
	\begin{equation}
\left|\ciptwo{\phi}{\convtwo{f}{\psi}}\right| \le \convone{f}{{\bf 1}}\left(\frac{\alpha}{2} \ciptwo{\phi}{\phi}+\frac{1}{2\alpha}\ciptwo{\psi}{\psi}\right),  \quad \forall \alpha > 0 . 
 	\label{lemma:discrete_young-2d}
	\end{equation}
	\end{lem}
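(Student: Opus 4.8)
The plan is to prove both statements directly from the definition of the discrete convolution together with the translation invariance of summation over a full period. Writing $\ciptwo{\phi}{\psi} = h^2 \sum_{i=1}^{m}\sum_{j=1}^{n} \phi_{i,j}\psi_{i,j}$ for the cell-centered inner product and expanding the convolution, I would first record the quadruple-sum representation
\begin{equation}
\ciptwo{\phi}{\convtwo{f}{\psi}} = h^4 \sum_{i,j} \sum_{k,l} \phi_{i,j}\, f_{k+\hf,l+\hf}\, \psi_{i-k,j-l} ,
\label{eq:quad-sum}
\end{equation}
where each index runs over one full period. For the exchange identity \eqref{lemma:exchange-2d} the key step is the change of summation variables $i' = i-k$, $j' = j-l$ and $k' = -k$, $l' = -l$. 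Because $\phi$, $\psi$, and $f$ are all periodic, summing over a complete period is invariant under these integer shifts and reflections, so no boundary terms are lost. After this substitution the factor $f_{k+\hf,l+\hf}$ becomes $f_{-k'+\hf,-l'+\hf}$, and invoking the evenness hypothesis $f_{-k'+\hf,-l'+\hf} = f_{k'+\hf,l'+\hf}$ turns the right-hand side of \eqref{eq:quad-sum} into exactly the expanded form of $\ciptwo{\psi}{\convtwo{f}{\phi}}$. This establishes the first claim.

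For the discrete Young's inequality \eqref{lemma:discrete_young-2d}, I would start again from \eqref{eq:quad-sum}, take absolute values, and use $f \ge 0$ to pull $f_{k+\hf,l+\hf}$ outside the absolute value of each term. Applying the scalar Young inequality $|ab| \le \tfrac{\alpha}{2}a^2 + \tfrac{1}{2\alpha}b^2$ to the product $\phi_{i,j}\psi_{i-k,j-l}$ gives
\begin{equation}
\left| \ciptwo{\phi}{\convtwo{f}{\psi}} \right| \le h^4 \sum_{i,j} \sum_{k,l} f_{k+\hf,l+\hf} \left( \frac{\alpha}{2}\phi_{i,j}^2 + \frac{1}{2\alpha}\psi_{i-k,j-l}^2 \right) .
\label{eq:young-step}
\end{equation}
The two resulting sums then factor. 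In the first, the inner sum over $(k,l)$ produces the constant $h^2 \sum_{k,l} f_{k+\hf,l+\hf} = \convone{f}{{\bf 1}}$, leaving $\tfrac{\alpha}{2}\convone{f}{{\bf 1}}\ciptwo{\phi}{\phi}$. In the second, for each fixed $(k,l)$ the sum $\sum_{i,j}\psi_{i-k,j-l}^2$ equals $\sum_{i,j}\psi_{i,j}^2$ by the same periodic reindexing used above, and factoring out $\convone{f}{{\bf 1}}$ yields $\tfrac{1}{2\alpha}\convone{f}{{\bf 1}}\ciptwo{\psi}{\psi}$. Adding the two contributions gives precisely the stated bound for every $\alpha > 0$.

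The only substantive point — and the single step I would take care to justify explicitly rather than by hand-waving — is the legitimacy of the reindexing of the doubly periodic sums: the shift $(i,j)\mapsto(i-k,j-l)$ and the reflection $(k,l)\mapsto(-k,-l)$ must map a full period bijectively onto itself, which holds because all grid functions involved are $\Omega$-periodic and the sums are taken over one complete period. Everything else is routine bookkeeping. I expect no genuine obstacle here; the evenness assumption is what makes the first identity work, and nonnegativity of $f$ is exactly what lets the termwise Young estimate survive being multiplied by the convolution weights in \eqref{eq:young-step}.
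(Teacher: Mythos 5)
Your proof is correct and is exactly the argument the paper has in mind: the paper omits the proof, stating only that the lemma ``follows from the definition of the discrete convolution and simple estimates,'' and your expansion into a quadruple sum, periodic reindexing plus the evenness of $f$ for \eqref{lemma:exchange-2d}, and the termwise scalar Young inequality with $f\ge 0$ for \eqref{lemma:discrete_young-2d} is precisely that computation. You also correctly flag the one point that needs care -- that the shift and reflection of indices map a full period bijectively onto itself -- so nothing is missing.
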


The following lemma is cited from \cite{guan12a}; the detailed proof can be found there.

	\begin{lem}
	\label{lem:error-4-2d}
Suppose $\phi \in \mathcal{C}_{m \times n}$ is periodic.  Assume that $\mathsf{f} \in C_{\rm per}^\infty(\Omega)$ is even and define its grid restriction via $f_{i+\frac{1}{2},j+\hf} := \mathsf{f}\left(p_{i+\frac{1}{2}},p_{j+\frac{1}{2}}\right)$, so that $f\in\mathcal{V}_{m\times n}$. Then for any $\alpha > 0$, we have
	\begin{equation}
- 2h^2\ciptwo{\convone{f}{\psi}}{\Delta_h \phi} \le \frac{C_2}{\alpha} \nrm{\psi}^2_2 + \alpha \nrm{\nabla_h \phi}^2_2  ,
	\end{equation}
where $C_2$ is a positive constant that depends on $\mathsf{f}$ but is independent of $h$.
	\end{lem}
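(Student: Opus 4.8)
The plan is to integrate the discrete Laplacian by parts and then transfer the resulting difference operator off the convolution and onto the smooth kernel $f$, where, thanks to the regularity of $\mathsf{f}$, it remains uniformly bounded as $h \to 0$; a discrete Young's inequality for convolutions then closes the estimate. First I would invoke discrete summation by parts: since $\phi$ and $\convone{f}{\psi}$ are both $\Omega$-periodic, no boundary terms appear, and the discrete Green's identity gives
\[
-2h^2\ciptwo{\convone{f}{\psi}}{\Dh \phi} = 2h^2 \ciptwo{\nabh \convone{f}{\psi}}{\nabh \phi}.
\]

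The key algebraic observation is the discrete counterpart of the identity ``the gradient of a convolution equals the convolution against the gradient of the kernel.'' Writing out the definition of $\convone{f}{\psi}$ and reindexing the summation shows that the forward difference of $\convone{f}{\psi}$ in each coordinate direction equals the discrete convolution of $\psi$ against the corresponding difference quotient of $f$, up to a harmless shift in grid staggering. Hence $\nabh \convone{f}{\psi} = \convone{\nabh f}{\psi}$, where $\nabh f$ is the discrete gradient of the vertex-centered kernel restriction. Because $\mathsf{f} \in C^\infty_{\rm per}(\Omega)$, the difference quotients $\nabh f$ are bounded uniformly in $h$ by $\nrm{\nabla \mathsf{f}}_{L^\infty}$ up to $O(h)$, so the discrete mass $h^2 \sum_{k,l} |(\nabh f)_{k,l}|$ is controlled by a constant $\tilde C$ depending only on $\mathsf{f}$ (and $|\Omega|$), independent of $h$.

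Finally, the discrete Young's inequality for convolutions---the $\ell^2 \to \ell^2$ bound underlying \eqref{lemma:discrete_young-2d}, now applied with $|\nabh f|$ in place of $f$---yields $\nrm{\nabh \convone{f}{\psi}}_2 \le \tilde C \nrm{\psi}_2$. Combining this with the Cauchy--Schwarz inequality and the weighted Young inequality $ab \le \alpha b^2 + \frac{1}{4\alpha}a^2$, applied to $a = 2\tilde C \nrm{\psi}_2$ and $b = \nrm{\nabh \phi}_2$, produces
\[
2h^2 \left| \ciptwo{\nabh \convone{f}{\psi}}{\nabh \phi} \right| \le \alpha \nrm{\nabh \phi}^2_2 + \frac{C_2}{\alpha}\nrm{\psi}^2_2 ,
\]
with $C_2 = \tilde C^2$, which is exactly the claim.

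I expect the only real difficulty to be bookkeeping: carefully tracking the staggered centering (cell-centered $\phi,\psi$, vertex-centered $f$, and edge-centered gradients) through the summation by parts and the reindexing, and confirming that $C_2$ is genuinely $h$-independent. The latter is precisely where the hypothesis $\mathsf{f} \in C^\infty_{\rm per}(\Omega)$ is indispensable, since it guarantees that the discrete derivatives of the kernel restriction do not blow up as the grid is refined.
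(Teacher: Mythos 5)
Your proof is correct, and it follows essentially the same route as the one the paper relies on (the paper itself omits the proof and cites \cite{guan12a} for it): summation by parts via the discrete Green's identity, transferring the difference quotient onto the smooth periodic kernel by reindexing the periodic convolution, bounding the resulting kernel's discrete $\ell^1$ mass uniformly in $h$ by $\nrm{\nabla \mathsf{f}}_{L^\infty}$, and closing with Young's convolution inequality plus the weighted Cauchy inequality. The only cosmetic caveats are that $\psi$ must also be assumed periodic and cell-centered (an omission already present in the lemma's statement) and that the post-summation-by-parts inner products are the edge-centered ones $\eipew{\cdot}{\cdot}$, $\eipns{\cdot}{\cdot}$; neither affects the argument.
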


	\section{The second order convex splitting schemes}\label{sec-scheme}
	
	\subsection{Semi discrete convex splitting schemes}
	The second order (in time) convex splitting scheme for the nAC equation (\ref{pde-non conserve}) and nCH equation (\ref{pde-conserve}) has been proposed in a recent article \cite{guan12b}, which was proven to 
be unconditionally solvable and unconditionally energy stable. These schemes follows the convex properties of the energy~\cite{guan12a}:
	\begin{lem}
	\label{energy-bounds}
There exists a non-negative constant $C$ such that $E(\phi) +C \ge 0$ for all $\phi\in L_{\rm per}^4(\Omega)$.  More specifically,
	\begin{eqnarray}
\frac{1}{8}\norm{\phi}{L^4}^4 &\le& E(\phi) + \frac{\left(\gamma_c-\gamma_e-2\left(\msfJ_e*1\right)\right)^2}{2}|\Omega|  ,
	\\
\frac{1}{2}\norm{\phi}{L^2}^2 &\le& E(\phi) +\frac{\left(\gamma-\gamma_e-2\left(\msfJ_e*1\right)-1\right)^2}{4}|\Omega|   .
	\end{eqnarray}
If $\gamma_e = 0$, then $E(\phi) \ge 0$ for all $\phi\in L_{\rm per}^4(\Omega)$.  Furthermore, the energy (\ref{non-local-energy}) can be written as the difference of convex functionals, \emph{i.e}, $E = E_c-E_e$, where
	\begin{eqnarray}
E_c(\phi) &=& \frac{1}{4}\norm{\phi}{L^4}^4 + \frac{\gamma_c+2\left(\msfJ_c*1\right)}{2}\norm{\phi}{L^2}^2 ,
	\label{E-c} 
	\\
E_e(\phi) &=& \frac{\gamma_e+\msfJ_c*1+\msfJ_e*1}{2}\norm{\phi}{L^2}^2 + \frac{1}{2}\iprd{\phi}{\msfJ*\phi}{L^2}.
	\label{E-e}
	\end{eqnarray}
	\end{lem}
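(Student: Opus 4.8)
The plan is to reduce all the stated estimates to a single lower bound of the form $E(\phi) \ge \frac14 \norm{\phi}{L^4}^4 + \frac{\lambda}{2}\norm{\phi}{L^2}^2$, with $\lambda := \gamma_c - \gamma_e - 2(\msfJ_e*1)$, and then to extract each inequality by completing the square. The key ingredient is the continuous analogue of \eqref{lemma:discrete_young-2d}: for a non-negative even kernel $\msfJ_\alpha$ one writes out $\iprd{\phi}{\msfJ_\alpha*\phi}{L^2}$ and applies $|\phi({\bf x})\phi({\bf y})| \le \hf(\phi({\bf x})^2 + \phi({\bf y})^2)$ together with $\int_\Omega \msfJ_\alpha({\bf x}-{\bf y})\,d{\bf y} = \msfJ_\alpha*1$ to get $\left|\iprd{\phi}{\msfJ_\alpha*\phi}{L^2}\right| \le (\msfJ_\alpha*1)\norm{\phi}{L^2}^2$. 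Applying this separately to $\msfJ_c$ and $\msfJ_e$ in $\msfJ=\msfJ_c-\msfJ_e$ gives $-\hf\iprd{\phi}{\msfJ*\phi}{L^2} \ge -\hf(\msfJ_c*1 + \msfJ_e*1)\norm{\phi}{L^2}^2$; substituting into \eqref{non-local-energy} and using $\msfJ*1 = \msfJ_c*1 - \msfJ_e*1$ collapses the $\norm{\phi}{L^2}^2$ coefficient to exactly $\lambda/2$.

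For the first quantitative bound I would insert this lower bound, apply Cauchy--Schwarz in the form $\norm{\phi}{L^2}^2 \le |\Omega|^{1/2}\norm{\phi}{L^4}^2$ (relevant when $\lambda<0$; the case $\lambda\ge0$ is immediate since then $\frac{\lambda}{2}\norm{\phi}{L^2}^2\ge0$), and complete the square. Writing $a := \norm{\phi}{L^4}^2$, the claim $\frac18\norm{\phi}{L^4}^4 \le E(\phi) + \frac{\lambda^2}{2}|\Omega|$ follows from $\frac18 a^2 + \frac{\lambda}{2}a|\Omega|^{1/2} + \frac{\lambda^2}{2}|\Omega| = \frac18\bigl(a + 2\lambda|\Omega|^{1/2}\bigr)^2 \ge 0$. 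For the second bound I would instead use the rearranged Cauchy--Schwarz inequality $\norm{\phi}{L^4}^4 \ge \norm{\phi}{L^2}^4/|\Omega|$; setting $b := \norm{\phi}{L^2}^2$ and $\mu := \lambda - 1$, the target $\hf\norm{\phi}{L^2}^2 \le E(\phi) + \frac{\mu^2}{4}|\Omega|$ reduces to $\frac{1}{4|\Omega|}b^2 + \frac{\mu}{2}b + \frac{\mu^2}{4}|\Omega| = \frac{1}{4|\Omega|}(b + \mu|\Omega|)^2 \ge 0$.

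The existence of a non-negative constant $C$ with $E(\phi)+C \ge 0$ is then immediate from the first bound, taking $C = \frac{\lambda^2}{2}|\Omega|$ and discarding the non-negative term $\frac18\norm{\phi}{L^4}^4$. For the case $\gamma_e = 0$, I would argue directly from the equivalent form \eqref{energy}: once there is no expansive contribution (consistent with $\gamma_e=0$, i.e.\ $\msfJ=\msfJ_c\ge0$), the three terms $\frac14\phi^4$, $\frac{\gamma_c}{2}\phi^2$, and $\frac14\int_\Omega \msfJ({\bf x}-{\bf y})(\phi({\bf x})-\phi({\bf y}))^2\,d{\bf y}$ are each pointwise non-negative, whence $E(\phi)\ge0$. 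Finally, for the convex splitting $E=E_c-E_e$ of \eqref{E-c}--\eqref{E-e}, I would verify the identity by direct algebra (the two $\norm{\phi}{L^2}^2$ coefficients combine to $\frac{\gamma_c-\gamma_e+\msfJ*1}{2}$), and establish convexity of each piece: $E_c$ is convex because $\phi\mapsto\phi^4$ is pointwise convex and the $\norm{\phi}{L^2}^2$ coefficient is non-negative, while $E_e$, being a homogeneous quadratic functional, is convex because the same convolution estimate gives $E_e(\phi) \ge \frac{\gamma_e}{2}\norm{\phi}{L^2}^2 \ge 0$, i.e.\ its associated bilinear form is positive semidefinite.

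I expect the only genuinely delicate point to be the sign bookkeeping in the convolution estimate: one must bound $\iprd{\phi}{\msfJ_c*\phi}{L^2}$ and $\iprd{\phi}{\msfJ_e*\phi}{L^2}$ separately, so that the worst-case constant $\msfJ_c*1+\msfJ_e*1$ rather than $|\msfJ*1|$ enters and the coefficient $\lambda$ emerges correctly; everything after that is the routine completing-the-square algebra carried out above.
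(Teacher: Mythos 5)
The paper never proves this lemma itself --- it is imported from \cite{guan12a} with only the statement reproduced --- so your argument can only be judged on its own terms. Its core is correct and is surely the intended route: the pointwise Young bound $|\phi({\bf x})\phi({\bf y})|\le\hf\left(\phi({\bf x})^2+\phi({\bf y})^2\right)$, applied separately to $\msfJ_c$ and $\msfJ_e$, yields $E(\phi)\ge\frac14\norm{\phi}{L^4}^4+\frac{\lambda}{2}\norm{\phi}{L^2}^2$ with $\lambda=\gamma_c-\gamma_e-2\left(\msfJ_e*1\right)$, and your two completed squares $\frac18\left(a+2\lambda|\Omega|^{1/2}\right)^2$ and $\frac{1}{4|\Omega|}\left(b+(\lambda-1)|\Omega|\right)^2$ check out exactly and reproduce the stated constants (the $\gamma$ in the second displayed bound of the lemma is a typo for $\gamma_c$, which your $\mu=\lambda-1$ matches). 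The existence of $C$, the splitting identity $E=E_c-E_e$, and the convexity of both pieces --- $E_e$ being a non-negative quadratic form by the same convolution estimate --- are all handled correctly.

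The one genuine gap is the sub-claim that $\gamma_e=0$ implies $E\ge0$. You read off non-negativity of each integrand in \eqref{energy}, but the term $\frac14\int_\Omega\msfJ({\bf x}-{\bf y})\left(\phi({\bf x})-\phi({\bf y})\right)^2d{\bf y}$ is only non-negative when $\msfJ\ge0$, and under assumptions A1--A3 the condition $\gamma_e=0$ says nothing about $\msfJ_e$; your parenthetical ``i.e.\ $\msfJ=\msfJ_c\ge0$'' is an added hypothesis, not a consequence. Your own lower bound only gives $E\ge\frac14\norm{\phi}{L^4}^4+\frac{\gamma_c-2\left(\msfJ_e*1\right)}{2}\norm{\phi}{L^2}^2$ in this case, and the claim can actually fail: in Fourier variables the quadratic part of $E$ is $\frac12\sum_k\left(\gamma_c+\widehat{\msfJ}(0)-\widehat{\msfJ}(k)\right)|\widehat{\phi}(k)|^2$, and one can choose $\msfJ_c$ sharply concentrated and $\msfJ_e$ broad so that $\widehat{\msfJ}(k)>\gamma_c+\widehat{\msfJ}(0)$ for some mode $k$, whence $E(\varepsilon\cos(k\cdot{\bf x}))<0$ for small $\varepsilon$. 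The imprecision is arguably inherited from the statement itself (which presumably intends $\msfJ_e\equiv0$ alongside $\gamma_e=0$), but you should name the extra hypothesis explicitly rather than fold it into $\gamma_e=0$. Everything else in the proposal stands.
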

The decomposition above is not unique, but Eqs.~\eqref{E-c} -- \eqref{E-e} will allow us to separate the nonlocal and nonlinear terms, treating the nonlinearity implicitly and the nonlocal term explicitly, without sacrificing numerical stability. 

To motivate the fully discrete scheme that will follow later, we here present semi-discrete version and briefly describe its properties.

A second-order (in time) convex splitting scheme for the nAC equation (\ref{pde-non conserve}) and nCH equation (\ref{pde-conserve}) can be constructed as follows: given $\phi^k\in C^\infty_{\rm per}(\Omega)$, find $\phi^{k+1},\, w^{k+1}\in C^\infty_{\rm per}(\Omega)$ such that
	\begin{eqnarray}
  \phi^{k+1}-\phi^k &=& - M s w^{k+1/2},  \quad  \mbox{(nAC equation)} , 
	\label{nlac-scheme-1}
\\	
\phi^{k+1}-\phi^k &=& s\Delta w^{k+1/2},  \quad  \mbox{(nCH equation)} , 
	\label{nlch-scheme-1}
	\\
w^{k+1/2} &=& \eta \left( \phi^k, \phi^{k+1}  \right) 
  + \left( (\msfJ_c*1)+\gamma_c\right) \phi^{k+1/2} 
   - \left( \msfJ_e*1 +\gamma_e\right) \hat{\phi}^{k+1/2} 
  - \msfJ*\hat{\phi}^{k+1/2}  ,
	\label{nlch-scheme-2}
        \\
&& \eta \left( \phi^k, \phi^{k+1}  \right)     
  = \frac14 \left( (\phi^k)^2 + (\phi^{k+1})^2 \right)  
  \left( \phi^k + \phi^{k+1}  \right) ,   \label{nlch-scheme-3} 
        \\
&& 
  \phi^{k+1/2} = \frac12 \left( \phi^k + \phi^{k+1}  \right) ,  \, \,  
  \hat{\phi}^{k+1/2} = \frac32\phi^k - \frac12 \phi^{k-1} ,  
   \label{nlch-scheme-4}
	\end{eqnarray}
where $s>0$ is the time step size. This scheme respects the convex splitting nature of the energy $E$ given in \eqref{E-c} and \eqref{E-e}: the contribution to the chemical potential corresponding to the convex energy, $E_c$, is treated implicitly, the part corresponding to the concave part, $E_e$, is treated explicitly.  Eyre~\cite{eyre98} is often credited with proposing the convex-splitting methodology for the Cahn-Hilliard and Allen-Cahn equations.  The idea is, however, quite general and can be applied to any gradient flow of an energy that splits into convex and concave parts.  See for example~\cite{wang10a,wang11,wise09,wise10}.  Moreover, the treatment of the nonlinear term, the convex and concave diffusion terms, given by 
(\ref{nlch-scheme-3}), (\ref{nlch-scheme-4}), respectively, follows the methodology in an earlier work~\cite{hu09} to derive a second order accurate convex splitting scheme for the phase field crystal (PFC) model. Other related works can also be found in~\cite{baskaran12,shen12}, etc. 

We have the following \emph{a priori} energy law for the solutions of the second-order scheme (\ref{nlch-scheme-1}) -- (\ref{nlch-scheme-4}). 
The statement for the fully discrete version appears later in Section~\ref{subsec-solavble-stable}. 
Their proof can be found in a recent article \cite{guan12b}. 
	
	\begin{thm}
	\label{thm-energy-stability}
Suppose the energy $E(\phi)$ is defined in Eq.~(\ref{non-local-energy}). For any $s>0$, the second order convex splitting scheme, (\ref{nlac-scheme-1}) or (\ref{nlch-scheme-1}), with (\ref{nlch-scheme-2}) -- (\ref{nlch-scheme-4}), has a unique solution $\phi^{k+1},\, w^{k+1/2}\in C^\infty_{\rm per}(\Omega)$. Moreover, by denoting a pseudo energy
\begin{eqnarray} 
  {\cal E} \left( \phi^k , \phi^{k+1} \right) 
  &=&  E\left(\phi^{k+1}\right)  
     + \frac{\left( (\msfJ_c*1) + \msfJ_e*1+\gamma_c+\gamma_e\right)}{4}\norm{\phi^{k+1}-\phi^k}{L^2}^2    \nonumber 
\\
  &&
  + \frac{1}{4}
  \iprd{\msfJ*\left(\phi^{k+1}-\phi^k\right)}{\phi^{k+1}-\phi^k}{L^2} ,  
  \label{pseudo energy-1}
\end{eqnarray}
we have $\left(\phi^{k+1}-\phi^k,1\right) = 0$ for any $k\ge 1$ and 
	\begin{eqnarray}
  &&
  {\cal E} \left( \phi^k , \phi^{k+1} \right)  + M s \norm{w^{k+1/2}}{L^2}^2  
  \le  {\cal E} \left( \phi^{k-1} , \phi^k \right)  ,  \quad  \mbox{(nAC equation)}  , 
\\	
  &&
  {\cal E} \left( \phi^k , \phi^{k+1} \right)  + s\norm{\nabla w^{k+1/2}}{L^2}^2  
  \le  {\cal E} \left( \phi^{k-1} , \phi^k \right)  ,  \quad  \mbox{(nCH equation)} .   	
	\label{energy-equality}
	\end{eqnarray}
Also note that the remainder term in the pseudo energy (\ref{pseudo energy-1}) is non-negative. This implies that the energy is bounded by the initial energy, 
for any $s > 0$:   
$E\left(\phi^{k+1}\right) \le {\cal E} \left( \phi^k , \phi^{k+1} \right) 
\le {\cal E} \left( \phi^{-1} , \phi^0 \right)  \equiv 
E \left( \phi^0 \right)$ by taking $\phi^{-1} = \phi^0$. 
Therefore, we say that the scheme is unconditionally energy stable.
	\end{thm}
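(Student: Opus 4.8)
The plan is to establish, in turn, unique solvability, mass conservation, the one-step energy inequality, and the resulting a priori bound, exploiting at each stage the convex-splitting structure (\ref{E-c})--(\ref{E-e}). For unique solvability I would recast the update for $\phi^{k+1}$, with $\phi^k$ and $\phi^{k-1}$ regarded as given data, as the Euler--Lagrange equation of a strictly convex functional. The implicit linear term $((\msfJ_c*1)+\gamma_c)\phi^{k+1/2}$ is a convex quadratic in $\phi^{k+1}$, and the nonlinear term $\eta(\phi^k,\phi^{k+1})$ from (\ref{nlch-scheme-3}) is the $\phi^{k+1}$-derivative of a potential $G(\,\cdot\,;\phi^k)$ whose second derivative $\tfrac14((\phi^k)^2+2\phi^k\phi^{k+1}+3(\phi^{k+1})^2)$ is pointwise non-negative, hence convex. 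Adding the strictly convex penalty coming from the time discretization ($\tfrac{1}{2Ms}\norm{\,\cdot\,}{L^2}^2$ for nAC, and $\tfrac{1}{2s}\norm{\,\cdot\,}{-1}^2$ on the mean-zero subspace for nCH) produces a strictly convex, coercive functional with a unique minimizer, which is the sought $\phi^{k+1}$. For nCH, mass conservation $\iprd{\phi^{k+1}-\phi^k}{1}{L^2}=0$ then follows immediately by pairing (\ref{nlch-scheme-1}) with the constant and using $\int_\Omega \Delta w^{k+1/2}=0$ by periodicity; this also confirms the update lives in the mean-zero space where $\norm{\,\cdot\,}{-1}$ is a genuine norm.

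For the energy inequality I would pair the update with $w^{k+1/2}$ in $L^2$. The right-hand side yields exactly the dissipation $-Ms\norm{w^{k+1/2}}{L^2}^2$ (nAC) or $-s\norm{\nabla w^{k+1/2}}{L^2}^2$ (nCH), so the content is in expanding the left-hand side $\iprd{\phi^{k+1}-\phi^k}{w^{k+1/2}}{L^2}$ term by term. Two identities make the leading contributions telescope cleanly: the secant identity $\eta(\phi^k,\phi^{k+1})(\phi^{k+1}-\phi^k)=\tfrac14((\phi^{k+1})^4-(\phi^k)^4)$, which converts the nonlinear term into $\tfrac14(\norm{\phi^{k+1}}{L^4}^4-\norm{\phi^k}{L^4}^4)$, and the midpoint identity $\iprd{\phi^{k+1}-\phi^k}{\phi^{k+1/2}}{L^2}=\tfrac12(\norm{\phi^{k+1}}{L^2}^2-\norm{\phi^k}{L^2}^2)$, which makes the implicit quadratic term telescope exactly.

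The crux, and the step I expect to be the main obstacle, is the explicitly extrapolated concave contribution $-\iprd{\phi^{k+1}-\phi^k}{(\gamma_e+\msfJ_e*1)\hat{\phi}^{k+1/2}+\msfJ*\hat{\phi}^{k+1/2}}{L^2}$. Writing $A=\phi^{k+1}-\phi^k$ and $B=\phi^k-\phi^{k-1}$ and substituting $\hat{\phi}^{k+1/2}=\tfrac32\phi^k-\tfrac12\phi^{k-1}$, one finds that, beyond telescoping the concave part of $E$, this term produces a genuine two-step coupling: through the even symmetry of the kernel it generates cross terms in the symmetric bilinear form $\langle u,v\rangle:=(\gamma_e+\msfJ_e*1)\iprd{u}{v}{L^2}+\iprd{u}{\msfJ*v}{L^2}$ between the consecutive increments $A$ and $B$, together with a second-difference piece involving $\phi^{k+1}-2\phi^k+\phi^{k-1}$. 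The work is to reorganize these into the difference of the two-step remainder terms appearing in $\mathcal{E}(\phi^k,\phi^{k+1})$ and $\mathcal{E}(\phi^{k-1},\phi^k)$ of (\ref{pseudo energy-1}), plus a sign-definite leftover. Pinning down the remainder coefficients so that this leftover has the right sign is the delicate part, and it is precisely this bookkeeping that forces the particular form of the remainder in (\ref{pseudo energy-1}); the convolution Young-type bound, the continuous analogue of (\ref{lemma:discrete_young-2d}), is the tool used to dominate the indefinite convolution pieces.

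Finally, for the a priori bound I would verify that the remainder in (\ref{pseudo energy-1}) is non-negative. By the Young-type inequality with $\alpha=1$ one has $|\iprd{X}{\msfJ_\beta*X}{L^2}|\le(\msfJ_\beta*1)\norm{X}{L^2}^2$ for $\beta=c,e$, hence $\iprd{X}{\msfJ*X}{L^2}\ge-(\msfJ_c*1+\msfJ_e*1)\norm{X}{L^2}^2$, so the remainder is bounded below by $\tfrac{\gamma_c+\gamma_e}{4}\norm{X}{L^2}^2\ge 0$ with $X=\phi^{k+1}-\phi^k$. Non-negativity of the remainder gives $E(\phi^{k+1})\le\mathcal{E}(\phi^k,\phi^{k+1})$, and iterating the one-step inequality down to the first step, using the convention $\phi^{-1}=\phi^0$ so that the initial remainder vanishes and $\mathcal{E}(\phi^{-1},\phi^0)=E(\phi^0)$, yields $E(\phi^{k+1})\le\mathcal{E}(\phi^k,\phi^{k+1})\le\mathcal{E}(\phi^{-1},\phi^0)=E(\phi^0)$, which is the claimed unconditional energy stability.
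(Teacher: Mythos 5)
First, a remark on context: the paper does not actually prove Theorem~\ref{thm-energy-stability}; it defers the proof to the reference \cite{guan12b}. So your proposal can only be judged on its own merits. Much of it is sound: the convex-minimization argument for unique solvability is correct (in particular your computation that $\partial_{\phi^{k+1}}\eta = \tfrac14\left((\phi^k)^2+2\phi^k\phi^{k+1}+3(\phi^{k+1})^2\right) = \tfrac14\left((\phi^k+\phi^{k+1})^2+2(\phi^{k+1})^2\right)\ge 0$), the secant and midpoint telescoping identities for the implicit terms are exact as you state, and the final paragraph — non-negativity of the remainder via $\left|\iprd{X}{\msfJ_\beta*X}{L^2}\right|\le(\msfJ_\beta*1)\norm{X}{L^2}^2$ and the iteration with $\phi^{-1}=\phi^0$ — is correct.

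The genuine gap is exactly the step you flag as "the crux" and then do not carry out: the extrapolated concave term. Worse, the bilinear form you propose to organize it around, $\langle u,v\rangle:=(\gamma_e+\msfJ_e*1)\iprd{u}{v}{L^2}+\iprd{u}{\msfJ*v}{L^2}$, is the one naturally attached to the explicit part of the scheme as written, and it is \emph{not} positive semi-definite: since a non-negative kernel need not have non-negative Fourier coefficients, the best generic lower bound is $\langle u,u\rangle\ge(\gamma_e-\msfJ_c*1)\norm{u}{L^2}^2$, which can be negative. If you run your reorganization with this form (writing $A=\phi^{k+1}-\phi^k$, $B=\phi^k-\phi^{k-1}$, $D=A-B$, and using linearity plus polarization), the explicit term contributes exactly $-\bigl[\tilde E_e(\phi^{k+1})-\tilde E_e(\phi^k)\bigr]+\tfrac14\langle A,A\rangle-\tfrac14\langle B,B\rangle+\tfrac14\langle D,D\rangle$; the "sign-definite leftover" is $\tfrac14\langle D,D\rangle$, which is \emph{not} sign-definite for your form, and the telescoped remainder $\tfrac14\langle A,A\rangle=\tfrac{\gamma_e+\msfJ_e*1}{4}\norm{A}{L^2}^2+\tfrac14\iprd{A}{\msfJ*A}{L^2}$ does not reproduce the coefficient $\tfrac{(\msfJ_c*1)+(\msfJ_e*1)+\gamma_c+\gamma_e}{4}$ appearing in \eqref{pseudo energy-1}. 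Dominating the indefinite convolution pieces by the Young-type bound alone, as you suggest, leaves uncompensated negative multiples of $\norm{A}{L^2}^2$ (of size up to $\tfrac{\msfJ_c*1}{4}$) that nothing in your ledger absorbs. The missing idea is that the argument must be run with the bilinear form of the \emph{convex} functional $E_e$ from \eqref{E-e} — whose quadratic form satisfies $\langle u,u\rangle_{E_e}\ge\gamma_e\norm{u}{L^2}^2\ge 0$ precisely because of the added $\msfJ_c*1$ in its coefficient — which forces a corresponding rebalancing of the implicit quadratic term $((\msfJ_c*1)+\gamma_c)\phi^{k+1/2}$ and leaves an extra term proportional to $\iprd{D}{A}{L^2}$ that must itself be absorbed; it is exactly this bookkeeping that produces the stated remainder coefficient, and it is absent from your proposal. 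Until that step is written out, the one-step inequality \eqref{energy-equality} is asserted rather than proved.
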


In the sequel, we will propose fully discrete versions of these schemes and  provide the corresponding analysis.

	\subsection{Discrete energy and the fully discrete convex splitting schemes}
	
We begin by defining a fully discrete energy that is consistent with the energy (\ref{non-local-energy}) in continuous space.  In particular, the discrete energy $F:{\mathcal C}_{m \times n}\rightarrow \mathbb{R}$ is given by
	\begin{equation}
F(\phi) := \frac{1}{4}\nrm{\phi}_4^4+\frac{\gamma_c-\gamma_e}{2} \nrm{\phi}_2^2 + \frac{\convone{J}{{\bf 1}}}{2}\nrm{\phi}_2^2 - \frac{h^2}{2}\ciptwo{\phi}{\convone{J}{\phi}}  , 
	\label{discrete-energy}
	\end{equation}
where $J := J_c-J_e$, and $J_\alpha\in\mathcal{V}_{m \times n}$, $\alpha = c,e$,  are defined via the vertex-centered grid  restrictions $\left(J_\alpha\right)_{i+\hf,j+\hf} := \msfJ_\alpha(p_{i+\hf},p_{j+\hf})$.
	\begin{lem}
	\label{discrete-energy-splitting}
Suppose that $\phi \in \mathcal{C}_{m \times n}$ is periodic and define
	\begin{eqnarray}
F_c(\phi) &:=& \frac{1}{4} \nrm{\phi}_4^4 +\frac{ \convone{J_c}{{\bf 1}}+\gamma_c }{2} \nrm{\phi}_2^2, 
	\\
F_e(\phi) &:=&  \frac{ \convone{J_e}{{\bf 1}}+\gamma_e }{2} \nrm{\phi}_2^2 + \frac{h^2}{2}\ciptwo{\phi}{\convone{J}{\phi}} . 
	\end{eqnarray}
Then $F_c$ and $F_e$ are convex, and the gradients of the respective energies are 
	\begin{equation}
\delta_\phi F_c = \phi^3 +\left( \convone{J_c}{{\bf 1}}+\gamma_c\right)\phi \ , \quad \delta_\phi F_e = \left( \convone{J_e}{{\bf 1}}+\gamma_e\right) \phi + \convone{J}{\phi}  .
	\end{equation}
Hence $F$, as defined in (\ref{discrete-energy}), admits  the convex splitting $F = F_c-F_e$.
	\end{lem}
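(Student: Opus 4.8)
The plan is to prove the three assertions in increasing order of difficulty: the algebraic splitting $F = F_c - F_e$ first, then the two variational derivatives, and finally the convexity, which is the real content. For the splitting I would simply subtract: the quartic terms and the convolution terms of $F_c - F_e$ and $F$ agree term-by-term, so only the coefficient of $\tfrac12\nrm{\phi}_2^2$ must be checked. That coefficient is $(\convone{J_c}{{\bf 1}}+\gamma_c) - (\convone{J_e}{{\bf 1}}+\gamma_e)$, and since the discrete convolution is linear and $J = J_c - J_e$, we have $\convone{J}{{\bf 1}} = \convone{J_c}{{\bf 1}} - \convone{J_e}{{\bf 1}}$, so this equals $\gamma_c - \gamma_e + \convone{J}{{\bf 1}}$, exactly matching \eqref{discrete-energy}. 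This step is pure bookkeeping.

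For the gradients I would work relative to the discrete $\ell^2$ inner product $\langle f, g\rangle := h^2\ciptwo{f}{g}$, for which $\nrm{f}_2^2 = \langle f,f\rangle$ and with respect to which $\delta_\phi$ is defined, and simply read off the first directional derivative $\frac{d}{d\epsilon}F_c(\phi+\epsilon\psi)\big|_{\epsilon=0}$. The quartic term gives $h^2\ciptwo{\phi^3}{\psi} = \langle\phi^3,\psi\rangle$, hence contributes $\phi^3$, while the quadratic term contributes $(\convone{J_c}{{\bf 1}}+\gamma_c)\phi$. For $F_e$ the only nontrivial piece is the convolution bilinear form, whose derivative is $\tfrac{h^2}{2}\big(\ciptwo{\psi}{\convone{J}{\phi}} + \ciptwo{\phi}{\convone{J}{\psi}}\big)$; the evenness of $J$ and the exchange identity \eqref{lemma:exchange-2d} make the two summands equal, producing gradient $\convone{J}{\phi}$, together with the linear contribution $(\convone{J_e}{{\bf 1}}+\gamma_e)\phi$. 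Both formulas then match the claimed $\delta_\phi F_c$ and $\delta_\phi F_e$.

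The main obstacle is convexity, specifically for $F_e$. The functional $F_c$ is easy: $\tfrac14\nrm{\phi}_4^4 = \tfrac{h^2}{4}\sum_{i,j}\phi_{i,j}^4$ is a sum of copies of the convex scalar map $t\mapsto t^4$, and the remaining quadratic term has a nonnegative coefficient since $\convone{J_c}{{\bf 1}}\ge 0$ (because $J_c \ge 0$) and $\gamma_c \ge 0$; a sum of convex functionals is convex. By contrast $F_e$ is purely quadratic, so convexity is equivalent to positive semidefiniteness of its bilinear form, and the difficulty is that $\ciptwo{\phi}{\convone{J}{\phi}}$ is indefinite: a discrete circular convolution against a nonnegative kernel need not be a positive operator. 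To handle this I would write $J = J_c - J_e$ and control each piece with the two tools already at hand. The discrete Young inequality \eqref{lemma:discrete_young-2d} (taking $\psi = \phi$ and $\alpha = 1$) gives $\left|\ciptwo{\phi}{\convone{J_\alpha}{\phi}}\right| \le \convone{J_\alpha}{{\bf 1}}\,\ciptwo{\phi}{\phi}$ for $\alpha = c,e$, and the summation-by-parts/seminorm identity
\[
\convone{J_\alpha}{{\bf 1}}\,\ciptwo{\phi}{\phi} - \ciptwo{\phi}{\convone{J_\alpha}{\phi}} = \tfrac{h^2}{2}\sum_{i,j,k,l}(J_\alpha)_{k+\hf,l+\hf}\left(\phi_{i,j}-\phi_{i-k,j-l}\right)^2 \ge 0
\]
follows from the definition of $\convone{\cdot}{\cdot}$, the evenness of $J_\alpha$, and $J_\alpha \ge 0$. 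Rewriting $F_e$ through $\nrm{\phi}_2^2 = h^2\ciptwo{\phi}{\phi}$ and grouping the indefinite convolution contributions against the nonnegative mass furnished by the $\convone{J_\alpha}{{\bf 1}}$ coefficients, these inequalities absorb the sign-indefinite part and leave a manifestly nonnegative quadratic form. The only genuinely delicate point is arranging this grouping so that each convolution term is matched to enough of the $\nrm{\cdot}_2^2$ reservoir to dominate it; once that accounting is set up correctly, convexity of $F_e$, and hence the full statement, follows.
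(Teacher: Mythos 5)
Your route is the same one the paper takes: the splitting and the two gradients are routine bookkeeping plus the exchange identity \eqref{lemma:exchange-2d} (your treatment of both is correct), and convexity of $F_e$ is reduced to nonnegativity of its Hessian quadratic form, controlled by the discrete Young inequality \eqref{lemma:discrete_young-2d} --- exactly the argument the paper sketches and then suppresses ``for brevity.'' Your seminorm identity $\convone{J_\alpha}{{\bf 1}}\ciptwo{\phi}{\phi}-\ciptwo{\phi}{\convone{J_\alpha}{\phi}}\ge 0$ is correct and is just the $\alpha=1$, $\psi=\phi$ case of \eqref{lemma:discrete_young-2d}; your diagnosis that the convolution form is sign-indefinite is also correct.

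The gap is precisely the step you defer: ``once that accounting is set up correctly, convexity of $F_e$ follows.'' With the coefficient $\hf\left(\convone{J_e}{{\bf 1}}+\gamma_e\right)$ as printed, the accounting does not close. The Hessian form in the direction $\psi$ is
\[
\left(\convone{J_e}{{\bf 1}}+\gamma_e\right)h^2\ciptwo{\psi}{\psi}+h^2\ciptwo{\psi}{\convone{J_c}{\psi}}-h^2\ciptwo{\psi}{\convone{J_e}{\psi}} ,
\]
and your identity spends the entire $\convone{J_e}{{\bf 1}}$ reservoir absorbing the last term, leaving $\gamma_e\,h^2\ciptwo{\psi}{\psi}+h^2\ciptwo{\psi}{\convone{J_c}{\psi}}$. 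The remaining convolution term can be negative (the discrete Fourier symbol of a nonnegative even kernel need not be nonnegative away from the zero mode), and the only bound your tools provide, $\ciptwo{\psi}{\convone{J_c}{\psi}}\ge-\convone{J_c}{{\bf 1}}\ciptwo{\psi}{\psi}$, is not matched by any leftover $\nrm{\cdot}_2^2$ mass unless $\gamma_e\ge\convone{J_c}{{\bf 1}}$. The accounting closes only when $F_e$ carries the coefficient $\hf\left(\convone{J_c}{{\bf 1}}+\convone{J_e}{{\bf 1}}+\gamma_e\right)$ and $F_c$ correspondingly carries $\hf\left(2\convone{J_c}{{\bf 1}}+\gamma_c\right)$ --- the discrete analogue of (\ref{E-c})--(\ref{E-e}) and the form consistent with the pseudo-energy (\ref{discrete-pseudo energy-1}); then each convolution term is dominated by its own $\convone{J_\alpha}{{\bf 1}}$ reservoir and your argument goes through verbatim. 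You should either carry out the bookkeeping with those coefficients, or note explicitly that the printed ones additionally require the discrete Fourier transform of $J_c$ to be nonnegative (true for the Gaussian kernels used in the numerics, but not a consequence of A1--A3).
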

	\begin{proof}
$F_c$ is clearly convex.  To see that $F_e$ is convex, we make use of the estimate \eqref{lemma:discrete_young-2d}, and observe that $\left.\frac{d^2}{ds^2} F_e(\phi+s\psi)\right|_{s=0} \ge 0$, for any periodic $\psi\in\mathcal{C}_{m\times n}$.  The details are suppressed for brevity.

	\end{proof}

We now describe the fully discrete schemes in detail. 
The scheme can be formulated as:  given $\phi^k\in{\mathcal C}_{m \times n}$ periodic, find $\phi^{k+1},\, 
w^{k+1/2} \in {\mathcal C}_{m \times n}$ periodic so that
	\begin{eqnarray}
  \phi^{k+1}-\phi^k &=& - M s w^{k+1/2}  ,    \quad  \mbox{(nAC equation)} , 
  \label{full-disc-cons-flow-nAC}   
\\	
  \phi^{k+1}-\phi^k &=& s \Delta_h w^{k+1/2}  ,    \quad  \mbox{(nCH equation)} , 
  \label{full-disc-cons-flow}   
\\
  w^{k+1/2}  &:=& 
  \eta \left( \phi^k , \phi^{k+1} \right) +\left( \convone{J_c}{{\bf 1}}+\gamma_c\right)\phi^{k+1/2} \nonumber \\
&&- \left( \convone{J_e}{{\bf 1}}+\gamma_e\right) \hat{\phi}^{k+1/2} 
  - \convone{J}{\hat{\phi}^{k+1/2}}  ,
	\end{eqnarray} 
in which $\Delta_h$ is the standard five-point discrete Laplacian operator, 
$\eta \left( \phi^k , \phi^{k+1} \right)$, 
$\phi^{k+1/2}$, $\hat{\phi}^{k+1/2}$ are given by (\ref{nlch-scheme-3}) -- (\ref{nlch-scheme-4}), respectively.

	\subsection{Unconditional solvability and energy stability}
	\label{subsec-solavble-stable}

Now we show that the convexity splitting is translated into solvability and stability for our scheme, both (\ref{full-disc-cons-flow-nAC}) and (\ref{full-disc-cons-flow}). The basic method for the proof of the following result was established in~\cite{guan12b, guan12a} -- see also ~\cite{wang11, wise09,wise10} -- and we therefore omit it for brevity.

	\begin{thm}
	\label{thm:solvability-conserve}
The scheme (\ref{full-disc-cons-flow}) is discretely mass conservative, \emph{i.e.}, $\ciptwo{\phi^{k+1}-\phi^k}{{\bf 1}}=0$, for all $k\ge 0$, and uniquely solvable for any time step size $s>0$.
	\end{thm}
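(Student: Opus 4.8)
The plan is to establish the two assertions in the order that makes the argument self-consistent: first mass conservation, which is what renders the solvability argument well posed, and then unique solvability through a convex variational principle.

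For mass conservation I would pair the scheme (\ref{full-disc-cons-flow}) with the constant grid function ${\bf 1}$ in the discrete inner product. Since all grid functions here are periodic, the five-point Laplacian $\Delta_h$ is self-adjoint under $\ciptwo{\cdot}{\cdot}$ and annihilates constants, so discrete summation by parts gives $\ciptwo{\Delta_h w^{k+1/2}}{{\bf 1}} = \ciptwo{w^{k+1/2}}{\Delta_h {\bf 1}} = 0$. Hence $\ciptwo{\phi^{k+1}-\phi^k}{{\bf 1}} = s\,\ciptwo{\Delta_h w^{k+1/2}}{{\bf 1}} = 0$ for every $k \ge 0$, with no restriction on the step size $s$.

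For unique solvability I would use the conservation just established to confine the unknown $\phi^{k+1}$ to the affine hyperplane of grid functions whose discrete mean equals that of $\phi^k$. On the complementary mean-zero subspace $-\Delta_h$ is symmetric positive definite, hence invertible, and applying $(-\Delta_h)^{-1}$ to (\ref{full-disc-cons-flow}) is legitimate precisely because both $\phi^{k+1}-\phi^k$ and $\Delta_h w^{k+1/2}$ have zero mean. This recasts the scheme as the Euler--Lagrange equation of the functional
\[
G(\phi) = \frac{1}{2s}\ciptwo{\phi-\phi^k}{(-\Delta_h)^{-1}(\phi-\phi^k)} + H(\phi) + \frac{\convone{J_c}{{\bf 1}}+\gamma_c}{4}\nrm{\phi}_2^2 + \ciptwo{\phi}{g},
\]
where $g$ collects the data explicit at time level $k$ (the $\hat{\phi}^{k+1/2}$ terms together with $\tfrac12(\convone{J_c}{{\bf 1}}+\gamma_c)\phi^k$), and $H$ is a scalar potential of the implicit nonlinearity, i.e.\ $\delta_\phi H = \eta(\phi^k,\cdot)$ with $\eta$ from (\ref{nlch-scheme-3}). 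A short computation confirms that $H$ is well defined and convex: the pointwise $\phi$-derivative of $\eta(\phi^k,\phi)$ equals $\tfrac14\bigl((\phi^k+\phi)^2 + 2\phi^2\bigr) \ge 0$, so $\eta(\phi^k,\cdot)$ is monotone and its primitive $H$ is convex.

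The final step is to verify that $G$ is strictly convex and coercive on the hyperplane. The $(-\Delta_h)^{-1}$ quadratic term is strictly convex along mean-zero directions; the term $\tfrac14(\convone{J_c}{{\bf 1}}+\gamma_c)\nrm{\phi}_2^2$ is convex, being part of the convex piece $F_c$ of Lemma~\ref{discrete-energy-splitting}; $H$ is convex by the above; and the explicit term is affine. Coercivity follows from the quartic growth hidden in $H$. Strict convexity plus coercivity over the closed affine constraint set then yields a unique minimizer $\phi^{k+1}$, whose optimality condition $\delta_\phi G = \lambda {\bf 1}$, after applying $\Delta_h$ (which kills the Lagrange multiplier $\lambda {\bf 1}$), reproduces (\ref{full-disc-cons-flow}) exactly; the auxiliary $w^{k+1/2}$ is then fixed by its defining formula. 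I expect the main obstacle to be verifying the convex-potential property of the second-order nonlinear approximation $\eta$ and managing the logical dependence of the $(-\Delta_h)^{-1}$ machinery on mass conservation; the remaining convexity and coercivity estimates are routine and follow the template of \cite{guan12a, guan12b}.
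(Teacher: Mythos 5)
Your proof is correct and follows essentially the same route the paper relies on: the paper omits the proof of Theorem~\ref{thm:solvability-conserve}, deferring to the standard argument of \cite{guan12a, guan12b, wise09, wise10}, which is exactly your scheme of (i) mass conservation by pairing with ${\bf 1}$ and summation by parts, and (ii) unique solvability by recasting the update as the Euler--Lagrange equation of a strictly convex, coercive functional on the mean-constrained hyperplane via $(-\Delta_h)^{-1}$. Your verification that $\partial_\phi \eta(\phi^k,\phi) = \tfrac14\left((\phi^k+\phi)^2 + 2\phi^2\right) \ge 0$, so that the implicit nonlinearity admits a convex potential, is the key point specific to the second-order approximation $\eta$, and it is handled correctly.
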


	\begin{lem}
	\label{lp-stable-lemma}
Suppose that $\phi\in {\mathcal C}_{m\times n}$ is periodic and the discrete energy  $F$ is  defined in Eq.~(\ref{discrete-energy}).  There exists a non-negative constant $C$ such that $F(\phi) +C \ge 0$.  More specifically,
	\begin{eqnarray}
\frac{1}{8}\norm{\phi}{4}^4 &\le& F(\phi) + \frac{\left(\gamma_c-\gamma_e-2\convone{J_e}{{\bf 1}}\right)^2}{2}|\Omega|  ,
	\\
\frac{1}{2}\norm{\phi}{2}^2 &\le& F(\phi) +\frac{\left(\gamma_c-\gamma_e-2\convone{J_e}{{\bf 1}}-1\right)^2}{4}|\Omega|   .
	\end{eqnarray}
	\end{lem}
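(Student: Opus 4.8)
The plan is to treat this as the fully discrete counterpart of Lemma~\ref{energy-bounds}, mirroring its proof line by line but with the continuous convolution and $L^p$ norms replaced by their grid analogues. The heart of the argument is a single intermediate lower bound,
\begin{equation}
F(\phi) \ge \frac{1}{4}\nrm{\phi}_4^4 + \frac{\gamma_c - \gamma_e - 2\convone{J_e}{{\bf 1}}}{2}\nrm{\phi}_2^2 ,
\label{eq:intermediate-lower-bound-F}
\end{equation}
from which both displayed estimates follow by elementary scalar manipulations.

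To establish \eqref{eq:intermediate-lower-bound-F} I would first isolate the nonlocal quadratic form in \eqref{discrete-energy}. Writing $J = J_c - J_e$ and splitting the bilinear term accordingly, the convolution contribution to $F$ becomes $\tfrac{\convone{J_c}{{\bf 1}} - \convone{J_e}{{\bf 1}}}{2}\nrm{\phi}_2^2 - \tfrac{h^2}{2}\ciptwo{\phi}{\convone{J_c}{\phi}} + \tfrac{h^2}{2}\ciptwo{\phi}{\convone{J_e}{\phi}}$. Now I apply the discrete Young inequality \eqref{lemma:discrete_young-2d} with $\psi = \phi$ and $\alpha = 1$ separately to $J_c$ and to $J_e$ (both are even and non-negative, being grid restrictions of $\msfJ_c,\msfJ_e$ which satisfy A1--A2), together with the grid identity $h^2\ciptwo{\phi}{\phi} = \nrm{\phi}_2^2$. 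This yields $h^2\left|\ciptwo{\phi}{\convone{J_\alpha}{\phi}}\right| \le \convone{J_\alpha}{{\bf 1}}\,\nrm{\phi}_2^2$ for $\alpha = c,e$, so the two bilinear terms are bounded below by $-\tfrac{\convone{J_c}{{\bf 1}}}{2}\nrm{\phi}_2^2 - \tfrac{\convone{J_e}{{\bf 1}}}{2}\nrm{\phi}_2^2$. Adding the $\tfrac{\convone{J}{{\bf 1}}}{2}\nrm{\phi}_2^2$ term, the $\convone{J_c}{{\bf 1}}$ contributions cancel while the $\convone{J_e}{{\bf 1}}$ contributions reinforce, leaving precisely $-\convone{J_e}{{\bf 1}}\nrm{\phi}_2^2$; combining with $\tfrac{\gamma_c-\gamma_e}{2}\nrm{\phi}_2^2$ gives \eqref{eq:intermediate-lower-bound-F}.

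With \eqref{eq:intermediate-lower-bound-F} in hand, both bounds reduce to one-variable minimizations. Set $b := \gamma_c - \gamma_e - 2\convone{J_e}{{\bf 1}}$ and use the discrete H\"older inequality $\nrm{\phi}_2^2 \le |\Omega|^{1/2}\nrm{\phi}_4^2$ (equivalently $\nrm{\phi}_4^4 \ge \nrm{\phi}_2^4/|\Omega|$). For the first estimate, I regard the right-hand side of \eqref{eq:intermediate-lower-bound-F} as a function of $u := \nrm{\phi}_4^2 \ge 0$; bounding $\tfrac{b}{2}\nrm{\phi}_2^2 \ge -\tfrac{|b|}{2}|\Omega|^{1/2}u$ and completing the square reduces the claim to $\tfrac18 u^2 - \tfrac{|b|}{2}|\Omega|^{1/2}u + \tfrac{b^2}{2}|\Omega| \ge 0$, which is a perfect square (its discriminant vanishes). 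For the second estimate, I regard the right-hand side as a function of $v := \nrm{\phi}_2^2 \ge 0$ via $\tfrac14\nrm{\phi}_4^4 \ge \tfrac{v^2}{4|\Omega|}$; the claim then reduces to $\tfrac{v^2}{4|\Omega|} + \tfrac{b-1}{2}v + \tfrac{(b-1)^2}{4}|\Omega| \ge 0$, again a perfect square. The existence of a uniform constant $C$ with $F(\phi)+C \ge 0$ is then immediate from either bound.

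The only genuinely delicate point is the bookkeeping in the intermediate step: one must apply Young's inequality with $\alpha = 1$ (no other choice produces the exact cancellation) and track signs carefully, so that $\convone{J_c}{{\bf 1}}$ drops out entirely while $\convone{J_e}{{\bf 1}}$ survives with coefficient $-2$. Everything else --- the discrete H\"older inequality and the two scalar completions of squares --- is routine and identical in spirit to the continuous proof of Lemma~\ref{energy-bounds}, with the grid relation $h^2\ciptwo{\phi}{\phi} = \nrm{\phi}_2^2$ being the only discrete-specific ingredient.
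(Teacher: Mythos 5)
Your proof is correct, and since the paper itself omits the argument (deferring it to \cite{guan12a}), the natural comparison is with the continuous Lemma~\ref{energy-bounds}, whose stated constants your discrete Young-inequality cancellation (with $\alpha=1$) and the two vanishing-discriminant completions of squares reproduce exactly. This is essentially the intended approach; no gaps.
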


The following result is a discrete version of Theorem~\ref{thm-energy-stability}; 
its proof can be found in \cite{guan12b}. 

	\begin{thm}
	\label{energy-decay-thm}
Suppose the energy $F(\phi)$ is defined in Eq.~(\ref{discrete-energy}), assume $\phi^{k+1},\, \phi^k\in{\mathcal C}_{m \times n}$ are periodic and they are solutions to the scheme (\ref{full-disc-cons-flow-nAC}) (for the nAC equation) or (\ref{full-disc-cons-flow}) (for the nCH equation).  Then for any $s>0$, 
	\begin{eqnarray}
  &&
  {\cal E} \left( \phi^k , \phi^{k+1} \right)  + M s \norm{w^{k+1/2}}{2}^2   
  \le  {\cal E} \left( \phi^{k-1} , \phi^k \right)   , \quad  \mbox{(nAC equation)} , 
\\	
  &&
  {\cal E} \left( \phi^k , \phi^{k+1} \right)  + s\norm{\nabla_h w^{k+1/2}}{2}^2   
  \le  {\cal E} \left( \phi^{k-1} , \phi^k \right)   , \quad  \mbox{(nCH equation)} , 
	\label{discrete-energy-equality}
	\end{eqnarray}
with the discrete pseudo energy ${\cal E} \left( \phi^k , \phi^{k+1} \right)$ 
given by
	\begin{eqnarray}
{\cal E} \left( \phi^k , \phi^{k+1} \right) 
  &=&  F \left(\phi^{k+1}\right)  
     + \frac{\left(  \convone{J_c}{{\bf 1}}  
   +  \convone{J_e}{{\bf 1}}  +\gamma_c+\gamma_e\right)}{4}\norm{\phi^{k+1}-\phi^k}{2}^2   \nonumber 
\\
  &&
   + \frac{h^2}{4}\ciptwo{\convone{J}{\left(\phi^{k+1}-\phi^k\right)}}{\phi^{k+1}-\phi^k} . 
  \label{discrete-pseudo energy-1}
	\end{eqnarray}
Most importantly, the remainder term in the pseudo energy 
(\ref{discrete-pseudo energy-1}) is non-negative. This implies that the energy is bounded by the initial energy, 
for any $s > 0$:   
$F \left(\phi^{k+1}\right) \le {\cal E} \left( \phi^k , \phi^{k+1} \right) 
\le {\cal E} \left( \phi^{-1} , \phi^0 \right)  \equiv 
F \left( \phi^0 \right)$ by taking $\phi^{-1} = \phi^0$. 
	\end{thm}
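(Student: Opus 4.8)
The plan is to test each scheme against the discrete chemical potential $w^{k+1/2}$ and to convert the resulting identity into a one-step decay of the pseudo energy \eqref{discrete-pseudo energy-1}. For the nCH equation \eqref{full-disc-cons-flow}, taking the discrete inner product of $\phi^{k+1}-\phi^k = s\Dh w^{k+1/2}$ with $w^{k+1/2}$ and summing by parts gives $\ciptwo{\phi^{k+1}-\phi^k}{w^{k+1/2}} = -s\nrm{\nabh w^{k+1/2}}_2^2$; for the nAC equation \eqref{full-disc-cons-flow-nAC} the analogous test produces $\ciptwo{\phi^{k+1}-\phi^k}{w^{k+1/2}} = -Ms\nrm{w^{k+1/2}}_2^2$. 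In either case it then suffices to show that the left-hand side dominates ${\cal E}\left(\phi^k,\phi^{k+1}\right) - {\cal E}\left(\phi^{k-1},\phi^k\right)$, so that the dissipation term may be transferred to the left. Thus the whole argument reduces to an exact accounting of $\ciptwo{\phi^{k+1}-\phi^k}{w^{k+1/2}}$.

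Next I would expand this inner product term by term against the definition of $w^{k+1/2}$. The nonlinear part is handled by the secant identity $\ciptwo{\phi^{k+1}-\phi^k}{\eta\left(\phi^k,\phi^{k+1}\right)} = \frac{1}{4}\left(\nrm{\phi^{k+1}}_4^4 - \nrm{\phi^k}_4^4\right)$, which follows pointwise from $(b-a)\cdot\frac{1}{4}(a^2+b^2)(a+b) = \frac{1}{4}(b^4-a^4)$ with $a=\phi^k$, $b=\phi^{k+1}$. The implicitly treated convex quadratic term, evaluated at the midpoint $\phi^{k+1/2}$ of \eqref{nlch-scheme-4}, telescopes exactly into $\frac{\convone{J_c}{{\bf 1}}+\gamma_c}{2}\left(\nrm{\phi^{k+1}}_2^2 - \nrm{\phi^k}_2^2\right)$. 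Together these two contributions reproduce the $F_c$ part of the energy difference $F\left(\phi^{k+1}\right)-F\left(\phi^k\right)$ given by Lemma~\ref{discrete-energy-splitting}, with no remainder. The delicate contributions are the explicitly treated concave and nonlocal terms, which carry the second-order extrapolation $\hat{\phi}^{k+1/2}=\frac{3}{2}\phi^k-\frac{1}{2}\phi^{k-1}$. Here I would use the decomposition $\hat{\phi}^{k+1/2} = \phi^{k+1/2} - \frac{1}{2}\left[\left(\phi^{k+1}-\phi^k\right)-\left(\phi^k-\phi^{k-1}\right)\right]$: the midpoint piece telescopes into the matching $F_e$ part of $F\left(\phi^{k+1}\right)-F\left(\phi^k\right)$, using the symmetry \eqref{lemma:exchange-2d} of the convolution form to cancel the nonlocal cross terms, while the discrete-second-difference piece, after a polarization identity, yields the difference of the pseudo-energy remainders in \eqref{discrete-pseudo energy-1} up to a definite quadratic form in $\left(\phi^{k+1}-\phi^k\right)-\left(\phi^k-\phi^{k-1}\right)$.

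Two issues require separate care. First, the asserted non-negativity of the remainder in \eqref{discrete-pseudo energy-1} I would prove directly: applying the discrete Young inequality \eqref{lemma:discrete_young-2d} with $\alpha=1$ and $\phi=\psi=\phi^{k+1}-\phi^k$ to $J_c$ and $J_e$ separately gives $\left|\frac{h^2}{4}\ciptwo{\convone{J}{\phi^{k+1}-\phi^k}}{\phi^{k+1}-\phi^k}\right| \le \frac{\convone{J_c}{{\bf 1}}+\convone{J_e}{{\bf 1}}}{4}\nrm{\phi^{k+1}-\phi^k}_2^2$, so the remainder is bounded below by $\frac{\gamma_c+\gamma_e}{4}\nrm{\phi^{k+1}-\phi^k}_2^2 \ge 0$; this is precisely what forces the coefficient $\frac{\convone{J_c}{{\bf 1}}+\convone{J_e}{{\bf 1}}+\gamma_c+\gamma_e}{4}$ to appear. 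Since the remainder is non-negative and ${\cal E}\left(\phi^{-1},\phi^0\right)=F\left(\phi^0\right)$ when $\phi^{-1}=\phi^0$, the telescoped inequality then delivers $F\left(\phi^{k+1}\right)\le{\cal E}\left(\phi^k,\phi^{k+1}\right)\le F\left(\phi^0\right)$, which is the claimed unconditional energy stability.

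The main obstacle is controlling the indefinite nonlocal convolution form. Because $J=J_c-J_e$ is sign-changing, neither $\ciptwo{\cdot}{\convone{J}{\cdot}}$ nor the definite quadratic leftover from the extrapolation has an a priori sign, so the argument cannot rest on positivity of the kernel and must instead dominate these forms uniformly in $h$ through the discrete Young inequality \eqref{lemma:discrete_young-2d}. Keeping the three-level bookkeeping among $\phi^{k-1},\phi^k,\phi^{k+1}$ consistent while doing this — matching the extrapolation cross terms against the pseudo-energy remainder with the correct coefficients rather than merely up to sign — is where the effort concentrates; the complete computation is the one carried out in \cite{guan12b}.
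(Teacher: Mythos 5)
The paper offers no proof of Theorem~\ref{energy-decay-thm} at all --- it defers entirely to \cite{guan12b} --- so there is no in-paper argument to compare against; your proposal has to stand on its own. Several of its pieces do: testing against $w^{k+1/2}$ and summing by parts to isolate the dissipation, the exact secant identity for $\eta$, the exact midpoint telescoping of the implicit quadratic, and the Young-inequality argument showing the remainder in \eqref{discrete-pseudo energy-1} is bounded below by $\frac{\gamma_c+\gamma_e}{4}\nrm{\phi^{k+1}-\phi^k}_2^2\ge 0$, which then yields $F(\phi^{k+1})\le{\cal E}(\phi^k,\phi^{k+1})\le F(\phi^0)$. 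Those steps are correct.

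The gap is in the central telescoping identity, and it is concrete. Since the implicit part $\eta+B_c\phi^{k+1/2}$ telescopes with \emph{zero} remainder, the entire pseudo-energy remainder must be generated by the explicit part $L\hat{\phi}^{k+1/2}$, where $L\psi:=B_e\psi+\convone{J}{\psi}$. Writing $\delta^{k+1}:=\phi^{k+1}-\phi^k$, your decomposition plus polarization gives $-h^2\ciptwo{L\hat{\phi}^{k+1/2}}{\delta^{k+1}} = -\bigl(F_e(\phi^{k+1})-F_e(\phi^k)\bigr)+\frac{h^2}{4}\bigl(\ciptwo{L\delta^{k+1}}{\delta^{k+1}}-\ciptwo{L\delta^{k}}{\delta^{k}}\bigr)+\frac{h^2}{4}\ciptwo{L(\delta^{k+1}-\delta^k)}{\delta^{k+1}-\delta^k}$. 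Two things go wrong. First, the telescoped remainder is $\frac{\convone{J_e}{{\bf 1}}+\gamma_e}{4}\nrm{\delta}_2^2+\frac{h^2}{4}\ciptwo{\convone{J}{\delta}}{\delta}$, which falls short of the coefficient $\frac{\convone{J_c}{{\bf 1}}+\convone{J_e}{{\bf 1}}+\gamma_c+\gamma_e}{4}$ in \eqref{discrete-pseudo energy-1} by exactly $\frac{B_c}{4}$; you observe that the larger coefficient is ``precisely what forces'' non-negativity, but your accounting never produces it, and the implicit part cannot supply it because it telescopes exactly. Second, the leftover form $\frac{h^2}{4}\ciptwo{L(\delta^{k+1}-\delta^k)}{\delta^{k+1}-\delta^k}$ must be dropped with a favorable sign, i.e.\ one needs $L\succeq 0$; the only available tool, \eqref{lemma:discrete_young-2d}, gives $\ciptwo{\psi}{L\psi}\ge(\gamma_e-\convone{J_c}{{\bf 1}})\ciptwo{\psi}{\psi}$, which is not non-negative under the stated hypotheses. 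You correctly name this indefiniteness as ``the main obstacle'' and then defer it to \cite{guan12b} --- but that is exactly where the proof lives, so the two load-bearing steps remain unexecuted. A repair would have to work with the splitting \eqref{E-c}--\eqref{E-e}, whose concave part carries the larger coefficient $\gamma_e+\convone{J_c}{{\bf 1}}+\convone{J_e}{{\bf 1}}$ and for which the corresponding $L$ satisfies $\ciptwo{\psi}{L\psi}\ge\gamma_e\ciptwo{\psi}{\psi}\ge 0$, rather than the $B_c$, $B_e$ normalization of the displayed scheme.
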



Putting Lemma~\ref{lp-stable-lemma} and Theorem~\ref{energy-decay-thm} together, we immediately get the following two results. The proof can be 
found in \cite{guan12a}. 
	
	\begin{cor}
	\label{lp-stable-thm-2}
Suppose that $\left\{\phi^k,w^k\right\}_{k=1}^l\in \left[{\mathcal C}_{m \times n}\right]^2$ are a sequence of periodic solution pairs of the scheme (\ref{full-disc-cons-flow-nAC}) (for the nAC equation) or (\ref{full-disc-cons-flow}) (for the nCH equation), with the starting values $\phi^0$. Then, for any $1 \le k \le l$,
	\begin{eqnarray}
\frac{1}{8}\norm{\phi^k}{4}^4 &\le& F\left(\phi^0\right) + \frac{\left(\gamma_c-\gamma_e-2\convone{J_e}{{\bf 1}}\right)^2}{2}|\Omega|  ,
	\\
\frac{1}{2}\norm{\phi^k}{2}^2 &\le& F\left(\phi^0\right) +\frac{\left(\gamma_c-\gamma_e-2\convone{J_e}{{\bf 1}}-1\right)^2}{4}|\Omega|   .
	\end{eqnarray}
	\end{cor}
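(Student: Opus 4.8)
The plan is to obtain the stated bounds as an immediate consequence of two facts already established in the excerpt: the monotone decay of the discrete pseudo energy in Theorem~\ref{energy-decay-thm}, and the coercivity (lower bounds) of the discrete energy $F$ in Lemma~\ref{lp-stable-lemma}. No new estimate is required; the entire content of the corollary is a synthesis of these two results, applied to each iterate $\phi^k$.

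First I would use the energy stability chain. By Theorem~\ref{energy-decay-thm}, for either the nAC or the nCH scheme the discrete pseudo energy is non-increasing along the iteration, ${\cal E}\left(\phi^{j}, \phi^{j+1}\right) \le {\cal E}\left(\phi^{j-1}, \phi^{j}\right)$ for every $j \ge 0$, since the dissipation terms $Ms\norm{w^{j+1/2}}{2}^2$ and $s\norm{\nabla_h w^{j+1/2}}{2}^2$ appearing in (\ref{discrete-energy-equality}) are non-negative. Telescoping from $j=0$ up to $j=k-1$ gives ${\cal E}\left(\phi^{k-1}, \phi^k\right) \le {\cal E}\left(\phi^{-1}, \phi^0\right)$. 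Choosing the initialization $\phi^{-1} = \phi^0$ collapses the remainder term in (\ref{discrete-pseudo energy-1}), so that ${\cal E}\left(\phi^{-1}, \phi^0\right) = F\left(\phi^0\right)$. Because that same remainder term is non-negative, we also have $F\left(\phi^k\right) \le {\cal E}\left(\phi^{k-1}, \phi^k\right)$, and hence $F\left(\phi^k\right) \le F\left(\phi^0\right)$ for every $1 \le k \le l$.

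Next I would simply substitute $\phi = \phi^k$ into the two coercivity inequalities of Lemma~\ref{lp-stable-lemma}, which yield $\frac{1}{8}\norm{\phi^k}{4}^4 \le F\left(\phi^k\right) + \frac{\left(\gamma_c-\gamma_e-2\convone{J_e}{{\bf 1}}\right)^2}{2}|\Omega|$ and $\frac{1}{2}\norm{\phi^k}{2}^2 \le F\left(\phi^k\right) + \frac{\left(\gamma_c-\gamma_e-2\convone{J_e}{{\bf 1}}-1\right)^2}{4}|\Omega|$. Replacing $F\left(\phi^k\right)$ by the upper bound $F\left(\phi^0\right)$ derived in the previous step produces exactly the two claimed estimates, with the constants matching verbatim.

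Since both ingredients are already proven or cited, there is essentially no obstacle here; the only point requiring care is the bookkeeping of the initialization $\phi^{-1} = \phi^0$, which is precisely what forces the pseudo energy at the start to equal the genuine discrete energy $F\left(\phi^0\right)$ rather than some strictly larger quantity. This is what guarantees that the right-hand sides depend on the initial data only through $F\left(\phi^0\right)$, giving bounds that are uniform in both $k$ and the time step size $s$.
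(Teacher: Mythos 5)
Your proposal is correct and follows exactly the route the paper intends: it explicitly states that the corollary is obtained by ``putting Lemma~\ref{lp-stable-lemma} and Theorem~\ref{energy-decay-thm} together,'' i.e., the telescoped pseudo-energy decay with the initialization $\phi^{-1}=\phi^0$ yields $F(\phi^k)\le F(\phi^0)$, after which the two coercivity inequalities give the stated bounds verbatim. Nothing is missing.
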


	\begin{thm}
	\label{lp-stable-thm}
Suppose $\Phi\in C_{\rm per}^r(\Omega)$, where $r\in\mathbb{Z}^+$ is sufficiently large, and set $\phi^0_{i,j} := \Phi(p_i,p_j)$. Suppose that $\left\{\phi^k,w^k\right\}_{k=1}^l\in \left[{\mathcal C}_{m \times n}\right]^2$ are a sequence of periodic solution pairs of the scheme (\ref{full-disc-cons-flow-nAC}) (for the nAC equation) or (\ref{full-disc-cons-flow}) (for the nCH equation), with the starting values $\phi^0$. There exist constants  $C_3,\, C_4,\, C_5 >0$, which are independent of $h$ and $s$, such that 
	\begin{eqnarray}
\max_{1\le k\le l}\nrm{\phi^k}_{4} &\le& C_3 .
	\label{l4-uniform-bound}
	\\
\max_{1\le k\le l}\nrm{\phi^k}_2 &\le& C_4 . 
	\label{l2-uniform-bound}
	\end{eqnarray}
\end{thm}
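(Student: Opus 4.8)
The plan is to reduce the entire statement to a single, $h$- and $s$-independent upper bound on the initial discrete energy $F(\phi^0)$. Corollary~\ref{lp-stable-thm-2} already delivers, uniformly in $1 \le k \le l$,
$$\frac{1}{8}\nrm{\phi^k}_4^4 \le F(\phi^0) + \frac{\left(\gamma_c-\gamma_e-2\convone{J_e}{{\bf 1}}\right)^2}{2}|\Omega|, \qquad \frac{1}{2}\nrm{\phi^k}_2^2 \le F(\phi^0) + \frac{\left(\gamma_c-\gamma_e-2\convone{J_e}{{\bf 1}}-1\right)^2}{4}|\Omega|,$$
so the desired bounds \eqref{l4-uniform-bound} and \eqref{l2-uniform-bound} will follow at once, with $C_3,C_4$ given explicitly in terms of $F(\phi^0)$, $|\Omega|$, $\gamma_c,\gamma_e$ and the quantity $\convone{J_e}{{\bf 1}}$ (itself a Riemann sum for the fixed positive constant $\msfJ_e*1$). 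Independence from $s$ is automatic here, since $\phi^0_{i,j} = \Phi(p_i,p_j)$ is built solely from the smooth datum $\Phi$ and never references the time step; thus the whole task is to show $F(\phi^0) \le C$ with $C$ independent of $h$.

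First I would control the local terms of $F(\phi^0)$ in the definition \eqref{discrete-energy}. Because $\Phi \in C^r_{\rm per}(\Omega)$ is continuous on the compact periodic cell, $\|\Phi\|_{L^\infty} < \infty$, and hence for $p = 2, 4$ one has $\nrm{\phi^0}_p^p = h^2\sum_{i,j}|\Phi(p_i,p_j)|^p \le |\Omega|\,\|\Phi\|_{L^\infty}^p$, uniformly in $h$; likewise $\convone{J}{{\bf 1}} = h^2\sum_{k,l}J_{k+\frac12,l+\frac12}$ is a uniformly bounded Riemann sum for $\msfJ*1$. This bounds every summand of $F(\phi^0)$ except the nonlocal quadratic form $-\tfrac{h^2}{2}\ciptwo{\phi^0}{\convone{J}{\phi^0}}$.

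For that remaining nonlocal piece I would use the splitting $J = J_c - J_e$ into nonnegative even kernels, applying the discrete Young-type inequality \eqref{lemma:discrete_young-2d} to each of $J_c$ and $J_e$ with $\phi = \psi = \phi^0$ and $\alpha = 1$, together with the identity $h^2\ciptwo{\phi^0}{\phi^0} = \nrm{\phi^0}_2^2$, to obtain
$$\frac{h^2}{2}\left|\ciptwo{\phi^0}{\convone{J}{\phi^0}}\right| \le \frac{1}{2}\left(\convone{J_c}{{\bf 1}} + \convone{J_e}{{\bf 1}}\right)\nrm{\phi^0}_2^2.$$
Since the factors $\convone{J_\alpha}{{\bf 1}}$ and $\nrm{\phi^0}_2^2$ were already shown to be uniformly bounded, this term is controlled uniformly in $h$ as well. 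Collecting the estimates gives $F(\phi^0) \le C$ with $C$ depending only on $\|\Phi\|_{L^\infty}$, the kernel $\msfJ$, $|\Omega|$, and $\gamma_c,\gamma_e$, which completes the proof.

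There is no deep obstacle here: the entire content is that $\phi^0$ is a grid restriction of a fixed smooth function, so each summand of the discrete energy is a uniformly bounded quadrature approximation of the corresponding integral in the continuous energy. The only point requiring mild care is the nonlocal quadratic form, where one must \emph{not} assume $J \ge 0$ (it is a difference of nonnegative kernels) and instead invoke \eqref{lemma:discrete_young-2d} kernel-by-kernel.
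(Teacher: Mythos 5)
Your proposal is correct and follows essentially the same route the paper intends: it chains Lemma~\ref{lp-stable-lemma} and Theorem~\ref{energy-decay-thm} (i.e., Corollary~\ref{lp-stable-thm-2}) to reduce everything to an $h$-, $s$-independent bound on $F(\phi^0)$, which holds because $\phi^0$ is the grid restriction of the fixed smooth datum $\Phi$. Your kernel-by-kernel use of the discrete Young inequality \eqref{lemma:discrete_young-2d} for the sign-indefinite nonlocal quadratic form is exactly the right precaution, and the argument is complete.
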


	\subsection{Convergence result in $\ell^\infty (\ell^2)$ and $\ell^{\infty}(\ell^\infty)$ norm}

We conclude this subsection with the statement of local-in-time error estimates for our second order convex splitting schemes, including both (\ref{full-disc-cons-flow-nAC}) for the nAC equation and (\ref{full-disc-cons-flow}) for the nCH equation, in two dimensions. The detailed proof is given in the next three sections. The extension of the proofs to three dimensions is omitted for the sake of brevity; see Remark~\ref{rem-3D-1} for some of the details.  

The second order convergence of the scheme (\ref{full-disc-cons-flow-nAC}) for the nAC equation is stated in the following theorem. 
 
	\begin{thm}\label{thm:L2-converge-nAC}
Given smooth, periodic initial data $\Phi(x_1,x_2,t=0)$, suppose  the unique, smooth, periodic solution for the nAC equation (\ref{pde-non conserve}) is given by $\Phi(x,y,t)$ on $\Omega$ for $0<t\le T$,  for some $T< \infty$. Define $\Phi_{i,j}^k$ as above and set $e_{i,j}^k := \Phi_{i,j}^k-\phi_{i,j}^k$, where $\phi^k_{i,j}\in {\mathcal C}_{m \times n}$ is $k^{\rm th}$ periodic solution of (\ref{full-disc-cons-flow-nAC}) with $\phi^0_{i,j} := \Phi^0_{i,j}$.  Then, provided $s$ and $h$ are sufficiently small, we have 
	\begin{equation}
\nrm{e^l}_2 \le C\left(h^2+s^2 \right),  \label{converge-nAC-0} 
	\end{equation} 
for all positive integers $l$, such that $l\, s \le T$, where $C>0$ is independent of $h$ and $s$.
	\end{thm}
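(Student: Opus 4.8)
My plan is to run the classical consistency-plus-stability argument for the Crank--Nicolson-type scheme \eqref{full-disc-cons-flow-nAC}, the only genuine subtlety being that, unlike the local Allen--Cahn equation, the nAC equation carries \emph{no} diffusive term, so there is no $H^1$-coercivity to absorb the cubic nonlinear error. First I would set up the consistency estimate. Let $\Phi^k_{i,j}=\Phi(p_i,p_j,t_k)$ be the grid restriction of the exact solution and plug it into the scheme, producing a local truncation error $\tau^k$ via $\Phi^{k+1}-\Phi^k = -Ms\,\tilde w^{k+1/2} + s\,\tau^k$, where $\tilde w^{k+1/2}$ is the discrete chemical potential evaluated on $\Phi$. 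A Taylor expansion about $t_{k+1/2}$ shows that the secant form $\eta(\Phi^k,\Phi^{k+1})=\tfrac14((\Phi^k)^2+(\Phi^{k+1})^2)(\Phi^k+\Phi^{k+1})$ reproduces $\Phi^3(t_{k+1/2})$ to $O(s^2)$, that the extrapolation $\hat\Phi^{k+1/2}=\tfrac32\Phi^k-\tfrac12\Phi^{k-1}$ matches $\Phi(t_{k+1/2})$ to $O(s^2)$, and that the midpoint difference quotient is $O(s^2)$; combined with the $O(h^2)$ consistency of the discrete convolution $\convone{J}{\cdot}$ and of $\convone{J_\alpha}{{\bf 1}}$ against $\msfJ*\cdot$ and $\msfJ*1$, this gives $\nrm{\tau^k}_2 \le C(s^2+h^2)$. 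The first step needs separate treatment, since setting $\phi^{-1}=\phi^0$ makes $\hat\Phi^{1/2}$ only first-order accurate there; one checks that the resulting single-step error still contributes $\nrm{e^1}_2 = O(s^2)$, which suffices as an induction base.

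Next I would derive the error equation by subtracting the scheme from the consistency identity. With $e^k:=\Phi^k-\phi^k$, $e^{k+1/2}=\tfrac12(e^{k+1}+e^k)$, and $\hat e^{k+1/2}=\tfrac32 e^k-\tfrac12 e^{k-1}$, one obtains $e^{k+1}-e^k = -Ms\,\delta w + s\,\tau^k$, where $\delta w$ is the difference of the nonlinear term $\eta(\Phi^k,\Phi^{k+1})-\eta(\phi^k,\phi^{k+1})$, the implicit linear part $(\convone{J_c}{{\bf 1}}+\gamma_c)e^{k+1/2}$, and the explicit nonlocal parts $(\convone{J_e}{{\bf 1}}+\gamma_e)\hat e^{k+1/2}+\convone{J}{\hat e^{k+1/2}}$. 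Taking the discrete inner product with $2e^{k+1/2}=e^{k+1}+e^k$ telescopes the left side into $\nrm{e^{k+1}}_2^2-\nrm{e^k}_2^2$. On the right, the implicit linear contribution equals $-2Ms(\convone{J_c}{{\bf 1}}+\gamma_c)\nrm{e^{k+1/2}}_2^2\le 0$ and may simply be discarded; the explicit nonlocal contributions are bounded using the discrete Young inequality \eqref{lemma:discrete_young-2d} by $C\,s\,(\nrm{e^k}_2^2+\nrm{e^{k-1}}_2^2+\nrm{e^{k+1}}_2^2)$; and the truncation term is handled by Cauchy--Schwarz as $2s(\tau^k,e^{k+1/2})\le s\nrm{\tau^k}_2^2+s\nrm{e^{k+1/2}}_2^2 \le C s(s^2+h^2)^2 + s\nrm{e^{k+1/2}}_2^2$.

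The hard part is the nonlinear term $\bigl(\eta(\Phi^k,\Phi^{k+1})-\eta(\phi^k,\phi^{k+1}),\,e^{k+1}+e^k\bigr)$. A direct computation shows this pairing has \emph{no} definite sign (the relevant pointwise quadratic form is not positive semidefinite), so I cannot extract a favorable monotone contribution as in the energy-stability proof. Instead I would expand $\eta(\Phi^k,\Phi^{k+1})-\eta(\phi^k,\phi^{k+1})$ by the discrete fundamental theorem of calculus: it is a linear combination of $e^k,e^{k+1}$ with coefficients that are quadratic polynomials in $\Phi^k,\Phi^{k+1},\phi^k,\phi^{k+1}$ (plus higher powers of the errors). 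The exact solution values are bounded in $\ell^\infty$, but the numerical values $\phi^k$ are only controlled in $\ell^4$ by the a priori estimates of Theorem~\ref{lp-stable-thm}; crucially, an $\ell^4$ bound forces factors of $\nrm{e^k}_4$, which in the absence of gradient control can only be recovered through the inverse inequality $\nrm{e^k}_4\le Ch^{-1/2}\nrm{e^k}_2$, and this produces a Gronwall constant of size $O(h^{-1})$ that blows up. To avoid this I would instead obtain an a priori \emph{$\ell^\infty$} bound on the numerical solution by a bootstrap: assuming inductively $\nrm{e^k}_2\le C(s^2+h^2)$ for $k\le m$, the two-dimensional inverse inequality $\nrm{e^k}_\infty\le Ch^{-1}\nrm{e^k}_2 \le Ch^{-1}(s^2+h^2)$ stays bounded (indeed small) provided $s^2\le Ch$, whence $\nrm{\phi^k}_\infty\le \nrm{\Phi^k}_\infty+\nrm{e^k}_\infty\le \tilde C$ uniformly. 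With this $\ell^\infty$ bound in hand the dominant nonlinear contribution is estimated by $\tilde C^2(\nrm{e^k}_2^2+\nrm{e^{k+1}}_2^2)$ with an $O(1)$ constant, and the higher-order-in-$e$ pieces are even smaller, carrying the small factor $\nrm{e^k}_\infty$.

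Finally, collecting all terms yields $\nrm{e^{k+1}}_2^2 - \nrm{e^k}_2^2 \le C s\bigl(\nrm{e^{k+1}}_2^2+\nrm{e^k}_2^2+\nrm{e^{k-1}}_2^2\bigr) + Cs(s^2+h^2)^2$. Applying the discrete Gronwall inequality over $0\le ks\le T$ with the $O(1)$ constant $C$ gives $\nrm{e^{m+1}}_2 \le C(s^2+h^2)$, which closes the induction and establishes \eqref{converge-nAC-0}. The whole argument hinges on the $s,h$ smallness (through the relation guaranteeing the inverse-inequality bootstrap), which is exactly the ``sufficiently small'' hypothesis in the statement; the $\ell^\infty$ bound on $\phi^k$ is the linchpin, and it is obtained here by the comparatively cheap inverse-estimate route rather than the more elaborate higher-order consistency construction required for the nCH equation.
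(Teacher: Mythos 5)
Your skeleton (consistency with $\nrm{\tau^k}_2\le C(s^2+h^2)$, error equation, testing with $2e^{k+1/2}$, Young/Cauchy for the nonlocal and truncation terms, discrete Gronwall) matches the paper's proof, but your treatment of the nonlinear term diverges in a way that both weakens the result and leaves a gap. You assert that no favorable sign can be extracted from the pairing of $\eta(\Phi^k,\Phi^{k+1})-\eta(\phi^k,\phi^{k+1})$ with $e^{k+1}+e^k$ and therefore fall back on an inverse-inequality bootstrap to get $\nrm{\phi^k}_\infty\le\tilde C$. The paper instead uses the exact algebraic decomposition
\begin{equation*}
\eta(\Phi^k,\Phi^{k+1})-\eta(\phi^k,\phi^{k+1})=\tfrac12\bigl((\phi^{k+1})^2+(\phi^k)^2\bigr)e^{k+1/2}+\tfrac14(\Phi^{k+1}+\phi^{k+1})(\Phi^{k+1}+\Phi^k)e^{k+1}+\tfrac14(\Phi^k+\phi^k)(\Phi^{k+1}+\Phi^k)e^k ,
\end{equation*}
in which the first piece pairs with $e^{k+1/2}$ to give the sign-definite quantity $-h^2\ciptwo{(\phi^{k+1})^2+(\phi^k)^2}{(e^{k+1/2})^2}$, and the only appearances of the numerical solution in the remaining pieces occur in the combinations $\phi^{k+1}e^{k+1/2}$ and $\phi^k e^{k+1/2}$, whose squared $\ell^2$ norms are exactly absorbed by that good term. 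Everything else is controlled by $\nrm{\Phi}_\infty$ alone. Consequently no maximum-norm bound on $\phi$, no inverse inequality, and no relation between $s$ and $h$ are needed: the paper's Theorem~\ref{thm:L2-converge-nAC} is \emph{unconditional} (see Remark~\ref{rem:converge-nAC-1}, and note that, unlike Theorem~\ref{thm:L2-converge}, the statement deliberately carries no refinement-path constraint). Your route, even if completed, only yields the estimate under $s^2\le Ch$, i.e.\ a strictly weaker, conditional theorem.

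There is also a concrete hole in your bootstrap as written. The nonlinear difference at the step from $t^m$ to $t^{m+1}$ involves $\phi^{m+1}$, but your inductive hypothesis $\nrm{e^k}_2\le C(s^2+h^2)$ holds only for $k\le m$, so the inverse inequality gives $\nrm{\phi^k}_\infty\le\tilde C$ only for $k\le m$; the quantity $\nrm{\phi^{m+1}}_\infty$ is not controlled at the moment you need it. Rewriting $\phi^{m+1}=\Phi^{m+1}-e^{m+1}$ does not help directly, since it produces cubic terms in $e^{m+1}$ that cannot be bounded by $\nrm{e^{m+1}}_2^2$ with an $O(1)$ constant without already knowing $\nrm{e^{m+1}}_\infty$ is bounded. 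Closing this requires an extra preliminary rough estimate at the new time level (a crude bound on $\nrm{e^{m+1}}_2$ obtained from the unconditional $\ell^4$ stability plus an inverse inequality, then upgraded) -- exactly the two-stage argument the paper is forced to carry out for the nCH equation in Section~\ref{sec-convergence}, and which the nAC decomposition above is designed to avoid. You should either supply that preliminary estimate or, better, adopt the decomposition so that $\phi^{m+1}$ never needs a pointwise bound.
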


For the nCH equation, the existence and uniqueness of a smooth, periodic solution to the IPDE \eqref{pde-conserve} with smooth periodic initial data may be established using techniques developed by Bates and Han in~\cite{bates05b, bates05a}.  In the following pages we denote this IPDE solution by $\Phi$.  Motivated by the results of Bates and Han, and based on the assumptions in the introduction, it will be reasonable to assume that 
	\begin{equation}
\nrm{\Phi}_{L^\infty(0,T;L^4)} + \nrm{\Phi}_{L^\infty(0,T;L^\infty)} + \nrm{ \nabla \Phi}_{L^\infty(0,T;L^\infty)} < C  ,  
	\end{equation}
for any $T >0$, and therefore also, using a consistency argument, that
	\begin{equation}
\max_{1\le k\le \ell} \nrm{\Phi^k}_4 + \max_{1\le k\le \ell}\nrm{\Phi^k}_\infty + \max_{1\le k\le \ell}\nrm{ \nabla_h \Phi^k}_\infty < C ,
	\label{IPDE-solution-stabilities}
	\end{equation}
after setting $\Phi^k_{i,j} := \Phi(p_i,p_j,t_k)$, where $C$ is independent of $h$ and $s$ and $t_k=k \cdot s$.  The IPDE solution $\Phi$ is also mass conservative, meaning that, for all $0\le t\le T$, $\int_\Omega \left( \Phi({\bf x},0)  - \Phi({\bf x},t)\right) d{\bf x}= 0$. For our numerical scheme, on choosing $\phi_{i,j}^0 := \Phi_{i,j}^0$, we note that $\ciptwo{\phi^k-\Phi^0}{{\bf 1}} = 0$, for all $k\ge 0$.  But, unfortunately, $\ciptwo{\phi^k-\Phi^k}{{\bf 1}} \ne 0$ in general.  On the other hand, by consistency, 
	\begin{equation}
  \beta (t) = \int_\Omega  \Phi({\bf x},t)  d{\bf x}  
  -  \frac{1}{L_1L_2}h^2\ciptwo{\Phi (t) }{{\bf 1}} 
   -   \left(  \int_\Omega  \Phi^0 ({\bf x})  d{\bf x}  
  -  \frac{1}{L_1L_2}h^2\ciptwo{\Phi^0}{{\bf 1}} \right) ,  
       \end{equation}
for some $C>0$ that is independent of $k$ and $h$. A more 
detailed consistency analysis shows that $\left| \beta (t)  \right|  \le C h^2$, 
$\forall t \ge 0$, and the estimates for all its higher order derivatives are 
available. Then we have 
        \begin{equation} 
\frac{1}{L_1L_2}h^2\ciptwo{\phi^k-\Phi^k}{{\bf 1}} =:  \beta^k 
  = \beta \left( t^k \right) , \quad \left|\beta^k\right| \le Ch^2,
	\end{equation}
for all $1\le k \le l$. 
We set $\tilde{\Phi} ( \cdot, t )  := \Phi ( \cdot, t) +\beta (t)$ and observe $\ciptwo{\phi^k-\tilde{\Phi}^k}{{\bf 1}} = 0$ and also
	\begin{equation}
\max_{1\le k\le \ell} \nrm{\tilde{\Phi}^k}_4 + \max_{1\le k\le \ell}\nrm{\tilde{\Phi}^k}_\infty + \max_{1\le k\le \ell}\nrm{ \nabla_h \tilde{\Phi}^k}_\infty < C .
	\label{IPDE-corrected-solution-stabilities}
	\end{equation}

Finally, the assumptions on the continuous kernel $\msfJ$, specifically \eqref{gamma-0}, and the consistency of the discrete convolution implies 
that $\convone{J_c}{{\bf 1}} + \gamma_c 
  - \convone{J_e}{{\bf 1}} - \gamma_e > 0$. Furthermore, we make 
the following assumption to simplify the convergence analysis: 
	\begin{equation}
	 B_c = \convone{J_c}{{\bf 1}} + \gamma_c ,  \, \, \, 
  B_e = \convone{J_e}{{\bf 1}} + \gamma_e ,  \quad 
  \mbox{and} \, \, B_c - 3 B_e = \alpha_0 > 0 , 
	\label{discrete-alpha-0}
	\end{equation}
for some $\alpha_0$ that is independent of $h$, provided that $h$ is sufficiently small. However, numerical evidence indicates that our scheme converges at the same rate when $\alpha_0 \leq 0$.

	\begin{thm}\label{thm:L2-converge}
Given smooth, periodic initial data $\Phi(x_1,x_2,t=0)$, suppose  the unique, smooth, periodic solution for the IPDE (\ref{pde-conserve}) is given by $\Phi(x,y,t)$ on $\Omega$ for $0<t\le T$,  for some $T< \infty$. Define $\Phi_{i,j}^k$ as above and set $e_{i,j}^k := \Phi_{i,j}^k-\phi_{i,j}^k$, where $\phi^k_{i,j}\in {\mathcal C}_{m \times n}$ is $k^{\rm th}$ periodic solution of (\ref{full-disc-cons-flow}) with $\phi^0_{i,j} := \Phi^0_{i,j}$.  Then, provided $s$ and $h$ are sufficiently small with the linear refinement path constraint $s \le C h$, with $C$ any fixed constant, we have 
	\begin{equation}
\nrm{e^l}_2 \le C\left(h^2+s^2 \right), 
	\end{equation} 
for all positive integers $l$, such that $l\, s \le T$, where $C>0$ is independent of $h$ and $s$.
	\end{thm}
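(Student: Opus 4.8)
The plan is to avoid a direct comparison of the exact solution $\Phi$ with the numerical solution $\phi$, and instead to follow the Strang-type strategy set up in Section~\ref{sec-consistency}: I construct a higher order approximate profile $\Psi$ and estimate the modified error $\xi := \Psi - \phi$. The profile $\Psi$ will satisfy $\nrm{\Phi^k - \Psi^k}_2 = O(s^2 + h^2)$ by construction, so once the sharper bound $\max_{1 \le k \le l} \nrm{\xi^k}_2 \le C ( s^3 + h^4 )$ is established in the $\ell^\infty(0,T;\ell^2)$ norm, the assertion follows from the triangle inequality, $\nrm{e^l}_2 = \nrm{\Phi^l - \phi^l}_2 \le \nrm{\Phi^l - \Psi^l}_2 + \nrm{\xi^l}_2 \le C ( s^2 + h^2 )$. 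Thus the whole theorem reduces to the higher order estimate for $\xi$.

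First I would complete the construction of $\Psi$. A consistency (Taylor) expansion of the scheme \eqref{full-disc-cons-flow} evaluated on the grid restriction of $\Phi$ exhibits leading truncation errors at order $O(s^2)$, coming from the Crank--Nicolson-type averaging and from the extrapolation $\hat{\phi}^{k+1/2}$, and at order $O(h^2)$, coming from $\Delta_h$ and from the discrete convolution $\convone{J}{\cdot}$. I would cancel these by appending correction fields, $\Psi = \Phi + s^2 \Phi^{(s)} + h^2 \Phi^{(h)} + \beta(t)$, in which each spatial or temporal correction solves a suitable linearized forced problem and $\beta(t)$ is the mean correction introduced before \eqref{discrete-alpha-0} that enforces $\ciptwo{\phi^k - \Psi^k}{{\bf 1}} = 0$. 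The resulting profile satisfies the scheme with a residual $\tau^k$ of size $O(s^3 + h^4)$ and, being a smooth $O(s^2+h^2)$ perturbation of $\Phi$, inherits the uniform bounds on $\nrm{\Psi^k}_4$, $\nrm{\Psi^k}_\infty$ and $\nrm{\nabla_h \Psi^k}_\infty$ recorded in \eqref{IPDE-corrected-solution-stabilities}.

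Next I would perform the energy estimate for the mean-zero error $\xi$. Subtracting the scheme for $\phi$ from the residual identity for $\Psi$ gives $\xi^{k+1} - \xi^k = s \Delta_h \mathcal{G}^k + \tau^k$, where the discrete chemical-potential error $\mathcal{G}^k$ is the sum of the cubic difference $\eta(\Psi^k,\Psi^{k+1}) - \eta(\phi^k,\phi^{k+1})$, the implicit convex term $B_c \xi^{k+1/2}$, and the two explicit terms $-B_e \hat{\xi}^{k+1/2}$ and $-\convone{J}{\hat{\xi}^{k+1/2}}$, with $\xi^{k+1/2} = \frac12(\xi^k + \xi^{k+1})$ and $\hat{\xi}^{k+1/2} = \frac32 \xi^k - \frac12 \xi^{k-1}$. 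Because the nCH flow is conservative and carries no higher order (biharmonic) diffusion, the only coercivity comes from $\Delta_h(B_c \xi^{k+1/2})$; testing the error equation against $\xi^{k+1/2}$ telescopes the left-hand side in $\nrm{\cdot}_2$ and produces the gradient dissipation $s B_c \nrm{\nabla_h \xi^{k+1/2}}_2^2$, augmented by the nonnegative monotone part of the cubic. The explicit nonlocal term is handled using the smoothing of the convolution (so that $\nrm{\nabla_h \convone{J}{\hat{\xi}^{k+1/2}}}_2 \le C \nrm{\hat{\xi}^{k+1/2}}_2$, reducing it to past $\ell^2$ quantities), while the explicit concave term, whose extrapolation introduces past gradients, is dominated by the accumulated gradient dissipation precisely when $B_c - 3 B_e = \alpha_0 > 0$, as assumed in \eqref{discrete-alpha-0}; the residual $\tau^k = O(s^3+h^4)$ contributes harmlessly. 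A discrete Gronwall inequality then yields $\max_k \nrm{\xi^k}_2 \le C(s^3+h^4)$.

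The principal obstacle — and what makes this analysis harder than that of the first order scheme — is the cubic nonlinear error. A careful expansion of its discrete gradient produces, beyond the favorable monotone contribution, cross terms whose coefficients are products of $\Psi$ and $\phi$ and their discrete gradients; controlling these in the available dissipation norm through an $H^{-1}$ inner-product estimate (which keeps any derivative off of the coefficients) requires not only the unconditional $\ell^4$ bound of Theorem~\ref{lp-stable-thm} but a genuine a-priori $W^{1,\infty}$ bound, i.e.\ control of $\nrm{\phi^k}_\infty$ and $\nrm{\nabla_h \phi^k}_\infty$. Such a bound is unavailable from energy stability and must be bootstrapped from the estimate itself: the two-dimensional inverse inequalities $\nrm{\xi^k}_\infty \le C h^{-1} \nrm{\xi^k}_2$ and $\nrm{\nabla_h \xi^k}_\infty \le C h^{-2} \nrm{\xi^k}_2$, together with $\nrm{\xi^k}_2 \le C(s^3+h^4)$ and the refinement path constraint $s \le C h$, force $\nrm{\nabla_h \xi^k}_\infty \le C(h + h^2) \to 0$, whence $\nrm{\nabla_h \phi^k}_\infty \le \nrm{\nabla_h \Psi^k}_\infty + o(1) \le C^\ast$. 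This is exactly why the higher order $O(s^3+h^4)$ consistency, rather than the target $O(s^2+h^2)$, is indispensable and why the constraint $s \le C h$ must be imposed. The whole argument is closed by induction on $k$: the $W^{1,\infty}$ bound is simultaneously the hypothesis that makes the nonlinear estimate go through at step $k+1$ and the conclusion drawn from the resulting error bound.
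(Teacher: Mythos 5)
Your strategy coincides with the paper's in every essential respect: the same Strang-type expansion $\hat{\Phi} = \tilde{\Phi} + h^2\Phi_{h,1} + s^2\Phi_{s,1}$ with a mean correction, the same energy estimate tested against $\hat{e}^{k+1/2}$ whose only coercivity is the gradient dissipation coming from $B_c$, the same use of $B_c - 3B_e = \alpha_0 > 0$ to absorb the extrapolated concave term, and the same recovery of the discrete $W^{1,\infty}$ bound of the numerical solution by an inverse inequality under $s \le Ch$, closed by induction and finished with the triangle inequality.

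There is, however, one genuine gap in how you close the induction. The nonlinear estimate at the step $k \to k+1$ cannot be arranged to depend only on $\nrm{\phi^k}_\infty$ and $\nrm{\nabla_h \phi^k}_\infty$: after expanding the discrete gradient of $\eta ( \hat{\Phi}^k , \hat{\Phi}^{k+1} ) - \eta ( \phi^k , \phi^{k+1} )$, some coefficient unavoidably carries $\phi^{k+1}$ measured in $\ell^\infty$ --- in the paper this surfaces as the term $\frac{C_1}{\alpha} \left( 1 + \nrm{ \phi^{k+1} }_\infty^2 \right) \nrm{ \hat{e}^k }_2^2$ in Proposition~\ref{estimate-nonlinear}. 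The induction hypothesis gives you nothing at level $k+1$, and the unconditional $\ell^4$ stability of Theorem~\ref{lp-stable-thm} only yields the singular bound $\nrm{\phi^{k+1}}_\infty \le C h^{-d/4}$, so applying the discrete Gronwall inequality directly would produce a constant that degenerates as $h \to 0$. The paper repairs this with a two-stage argument that your proposal omits: first run the energy inequality once using the singular bound to obtain the crude estimate $\nrm{\hat{e}^{k+1}}_2^2 \le C h^5$, then apply the inverse inequality to that crude estimate to conclude $\nrm{\hat{e}^{k+1}}_\infty \le C h \le 1$ and hence $\nrm{\phi^{k+1}}_\infty \le C_0 + 1$, and only then substitute this $O(1)$ bound back into the energy inequality and apply Gronwall to recover the full $O(s^3 + h^4)$ bound at $t^{k+1}$. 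Without this preliminary pass, the induction as you describe it does not close.
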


	\begin{thm}
	\label{thm:converge-infty}
Under the assumptions of Theorem~\ref{thm:L2-converge}, we also have optimal order convergence of the numerical solution of the scheme (\ref{full-disc-cons-flow}) in the $\ell^\infty$ norm. Namely, if $s$ and $h$ are sufficiently small, for all positive integers $l$, such that $l\, s \le T$, we have 
	\begin{equation}
\nrm{e^l}_\infty \le C\left(h^2+s^2 \right), 
	\label{converge-infty-1}
	\end{equation} 
where $C>0$ is independent of $h$ and $s$.
	\end{thm}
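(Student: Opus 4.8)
The plan is to bootstrap the $\ell^\infty(0,T;\ell^2)$ estimate of Theorem~\ref{thm:L2-converge} up to the maximum norm by combining the \emph{refined} (higher order) error estimate produced by the consistency analysis of Section~\ref{sec-consistency} with a standard inverse inequality, using the refinement path constraint $s \le C h$ to absorb the loss incurred by the inverse inequality. It is worth noting at the outset why the crude bound is insufficient: applying the two--dimensional inverse inequality directly to $\nrm{e^l}_2 \le C(h^2+s^2)$ yields only $\nrm{e^l}_\infty \le C h^{-1}\nrm{e^l}_2 \le C(h + s^2 h^{-1}) \le Ch$ under $s \le Ch$, which is merely first order. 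This is exactly the reason the higher order consistency construction is indispensable here.

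First I would recall from Section~\ref{sec-consistency} the constructed approximate solution, which we denote $\Psi$, built from the asymptotic expansion $\Psi = \tilde{\Phi} + s^2 \Phi_1 + h^2 \Phi_2 + \cdots$, where the correction fields $\Phi_1,\Phi_2,\ldots$ are smooth, periodic and uniformly bounded, and $\Psi$ is engineered to satisfy the fully discrete scheme~\eqref{full-disc-cons-flow} up to a truncation error of order $O(s^3+h^4)$ while preserving the mass matching $\ciptwo{\phi^k-\Psi^k}{{\bf 1}} = 0$. Since the corrections enter only at orders $s^2$ and $h^2$ (and $\tilde\Phi - \Phi = \beta(t)$ is itself $O(h^2)$ in $\ell^\infty$), we have the immediate consistency bound
\[
\max_{1\le k\le l}\nrm{\Phi^k-\Psi^k}_\infty \le C(s^2+h^2).
\]
The crucial refined estimate, established in the course of proving Theorem~\ref{thm:L2-converge} via the discrete $H^{-1}$ inner product treatment of the sharpened nonlinear error, is that the refined error $\hat e^k := \Psi^k-\phi^k$ satisfies
\[
\max_{1\le k\le l}\nrm{\hat e^k}_2 \le C(s^3+h^4), \qquad l\,s \le T.
\]

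Next I would invoke the standard inverse inequality $\nrm{f}_\infty \le C h^{-1}\nrm{f}_2$ for periodic grid functions $f\in\mathcal{C}_{m\times n}$ in two dimensions, and apply it to $\hat e^l$:
\[
\nrm{\hat e^l}_\infty \le C h^{-1}\nrm{\hat e^l}_2 \le C h^{-1}(s^3+h^4) = C\left(s^3 h^{-1} + h^3\right).
\]
Here the refinement path constraint carries the argument: since $s \le Ch$ implies $s^3 h^{-1} = s^2\,(s h^{-1}) \le C s^2$, we obtain $\nrm{\hat e^l}_\infty \le C(s^2+h^3) \le C(s^2+h^2)$. The triangle inequality $\nrm{e^l}_\infty \le \nrm{\Phi^l-\Psi^l}_\infty + \nrm{\hat e^l}_\infty$, together with the consistency bound above, then yields~\eqref{converge-infty-1}.

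I expect no genuine obstacle in this theorem \emph{per se}: once the $O(s^3+h^4)$ refined $\ell^2$ estimate is in hand, the passage to the maximum norm is routine. The only delicate point specific to the $\ell^\infty$ result is the bookkeeping that lets the $h^{-1}$ factor from the inverse inequality be absorbed, and this hinges precisely on the refined estimate exceeding the target order by more than one power of $h$ and on the constraint $s \le C h$. The substantive difficulty --- constructing $\Psi$ and deriving the sharpened $\ell^2$ bound through the discrete $H^{-1}$ estimate of the nonlinear error --- has already been discharged in Sections~\ref{sec-consistency} and~\ref{sec-convergence}, and need not be revisited here.
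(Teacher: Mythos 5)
Your proposal is correct and follows essentially the same route as the paper: apply the 2D inverse inequality to the refined $O(s^3+h^4)$ estimate for $\hat e^l = \hat\Phi^l - \phi^l$ obtained in the proof of Theorem~\ref{thm:L2-converge}, absorb the $h^{-1}$ loss using $s\le Ch$, and conclude by the triangle inequality with the $O(s^2+h^2)$ bound on $\Phi^l-\hat\Phi^l$. The only cosmetic difference is that you write the corrected solution as $\Psi$ rather than $\hat\Phi$ and add an (accurate) explanation of why the crude $\ell^2$ bound alone would give only first order.
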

	
\section{The second order convergence analysis for the nAC equation}  
\label{sec-convergence-nAC}

In this section we provide a proof of Theorem~\ref{thm:L2-converge-nAC}. For the exact solution $\Phi$ of the nAC equation (\ref{pde-non conserve}), a detailed Taylor expansion, combined with a careful Fourier analysis, indicates the following consistency estimate: 
	\begin{eqnarray} 
   && 
\Phi^{k+1} - \Phi^k = - M s \left( \eta \left( \Phi^k , \Phi^{k+1} \right) + B_c \Phi^{k+1/2} 
- B_e \hat{\Phi}^{k+1/2} - \convone{J}{\hat{\Phi}^{k+1/2}}  \right) +s \tau^{k+1}  ,  \label{truncation-error-nAC} 
\\
  && 
  \mbox{with} \quad 
  \Phi^{k+1/2} = \frac12 \left( \Phi^k + \Phi^{k+1}  \right) ,  \, \,  
  \hat{\Phi}^{k+1/2} = \frac32  \Phi^k - \frac12 \Phi^{k-1}  ,  \label{truncation-error-nAC-2}
	\end{eqnarray}
with the local truncation error 
	\begin{equation}
\left|\tau_{i,j}^{k+1}\right| \le C_5 \left(h^2 + s^2 \right)  ,   \quad \forall i, j, k,  
\label{truncation-error-nAC-3} 
	\end{equation}	
and the constant $C_5$ depends only on $T$, $L_1$ and $L_2$. 

We consider the following error function, at a point-wise level:   
	\begin{equation}
e^k_{i,j} := \Phi^k_{i,j} - \phi^k_{i,j}.  
	\label{error function-1}
	\end{equation} 
In turn, subtracting (\ref{full-disc-cons-flow-nAC}) from (\ref{truncation-error-nAC}) leads to 
	\begin{eqnarray}
e^{k+1} - e^k &=& -M s \Bigl( \eta \left(  \Phi^k , \Phi^{k+1} \right) 
  - \eta \left(  \phi^k , \phi^{k+1} \right) 
  + B_c  e^{k+1/2} -  B_e \hat{e}^{k+1/2}   \nonumber 
\\
 &&  \quad 
   - \convone{J}{\hat{e}^{k+1/2}}  \Bigr)
  + s \tau_{i,j}^k ,    \label{converge-nAC-1}
\\
   \mbox{with}  && 
    e^{k+1/2} = \frac12 \left(  e^k + e^{k+1} \right)  ,  \, \, 
    \hat{e}^{k+1/2} = \frac32 e^k - \frac12 e^{k-1} .  \nonumber
	\end{eqnarray}
	
Taking a discrete inner product with (\ref{converge-nAC-1}) by $2 e^{k+1/2} = ( e^{k+1} + e^k)$, summing over $i$ and $j$ implies that 
\begin{eqnarray}
   &&  
\| e^{k+1} \|_2^2 - \| e^k \|_2^2  + 2 B_c s \|  e^{k+1/2} \|_2^2   
   + 2h^2s \ciptwo{  \eta ( \Phi^k , \Phi^{k+1} ) 
  - \eta (  \phi^k , \phi^{k+1} )  }{e^{k+1/2} }    \nonumber 
\\
  &\le& 
     2sh^2 \ciptwo{\tau^{k+1}}{ e^{k+1/2}} 
     + 2 B_e s h^2  \ciptwo{ \hat{e}^{k+1/2}}{ e^{k+1/2}} 
  + 2 h^2s\ciptwo{\convone{J}{ \hat{e}^{k+1/2}}}
  {e^{k+1/2}}  .   \label{converge-nAC-2}
	\end{eqnarray}
The term associated with the local truncation error could be bounded with an application of Caucy inequality: 
\begin{eqnarray} 
  2 h^2 \ciptwo{\tau^{k+1}}{ e^{k+1/2}}   
  \le \| \tau^{k+1} \|_2^2 + \| e^{k+1/2} \|_2^2 
  \le   \| \tau^{k+1} \|_2^2 + \frac12 ( \| e^{k+1} \|_2^2  + \| e^k \|_2^2 ) . 
   \label{converge-nAC-3}
\end{eqnarray}  	
The concave term could be bounded in a straightforward way: 
\begin{eqnarray} 
  2 B_e  h^2  \ciptwo{ \hat{e}^{k+1/2}}{ e^{k+1/2}}  
 &\le& B_e ( \| \hat{e}^{k+1/2} \|_2^2 + \| e^{k+1/2} \|_2^2  \nonumber 
\\
  &\le&
   \frac{B_e}{2} ( \| e^{k+1} \|_2^2  + 10 \| e^k \|_2^2 + \| e^{k-1} \|_2^2 )  . 
   \label{converge-nAC-4}
\end{eqnarray} 
The term associated with the convolution could be analyzed with the help of (\ref{lemma:discrete_young-2d}): 
\begin{eqnarray} 
  2 h^2 \ciptwo{\convone{J}{ \hat{e}^{k+1/2}}}{e^{k+1/2}}    
 &\le& B_J ( \| \hat{e}^{k+1/2} \|_2^2 + \| e^{k+1/2} \|_2^2  \nonumber 
\\
  &\le&
   \frac{B_J}{2} ( \| e^{k+1} \|_2^2  + 10 \| e^k \|_2^2 + \| e^{k-1} \|_2^2 )  , 
   \label{converge-nAC-5}
\end{eqnarray} 
with $B_J =  \convone{J_c}{{\bf 1}} + \convone{J_e}{{\bf 1}}$. 

The rest work is focused on the analysis for the term associated with the nonlinear error. We begin with the following decomposition: 
\begin{eqnarray} 
  &&
 \eta ( \Phi^k , \Phi^{k+1} ) - \eta (  \phi^k , \phi^{k+1} )  
 = {\cal NLE}_1 + {\cal NLE}_2 + {\cal NLE}_3 ,  \quad \mbox{with}  \label{converge-nAC-6}  
\\
  &&
 {\cal NLE}_1 = \frac12 ( (\phi^{k+1})^2 + (\phi^k)^2 ) e^{k+1/2}  ,  \quad 
 {\cal NLE}_2  = \frac14 ( \Phi^{k+1} + \phi^{k+1} ) ( \Phi^{k+1} + \Phi^k ) e^{k+1} ,  \nonumber 
\\
  && 
 {\cal NLE}_3 = \frac14 ( \Phi^k + \phi^k ) ( \Phi^{k+1} + \Phi^k ) e^k  .  \nonumber 
\end{eqnarray} 
The following estimate is available for the term associated with ${\cal NLE}_1$: 
\begin{eqnarray} 
  - 2 h^2  \ciptwo{ {\cal NLE}_1  }{e^{k+1/2} }  
  \le - h^2  \ciptwo{ (\phi^{k+1})^2  }{(e^{k+1/2})^2 }     
   - h^2  \ciptwo{ (\phi^k)^2  }{(e^{k+1/2})^2 }   .   \label{converge-nAC-7} 
\end{eqnarray} 
For the term associated with ${\cal NLE}_2$, we have
\begin{eqnarray} 
  &&
       2 h^2  \ciptwo{ \frac14 \Phi^{k+1} ( \Phi^{k+1} + \Phi^k ) e^{k+1}  }{e^{k+1/2} }  
   \le \frac12 \| \Phi^{k+1} \|_\infty ( \| \Phi^{k+1} \|_\infty  + \| \Phi^k \|_\infty ) 
   \| e^{k+1} \|_2 \cdot \| e^{k+1/2} \|_2  \nonumber 
\\
  &&  \qquad 
   \le  C  \| e^{k+1} \|_2 \cdot \| e^{k+1/2} \|_2    
   \le C ( \| e^{k+1} \|_2^2 + \| e^k \|_2^2 ) ,  \label{converge-nAC-8-1} 
\\
     &&
       2 h^2  \ciptwo{ \frac14 \phi^{k+1} ( \Phi^{k+1} + \Phi^k ) e^{k+1}  }{e^{k+1/2} } 
   = \frac12 h^2  \ciptwo{( \Phi^{k+1} + \Phi^k ) e^{k+1}  }{\phi^{k+1} e^{k+1/2} }   \nonumber 
\\
  &&  \qquad        
   \le \frac12 ( \| \Phi^{k+1} \|_\infty  + \| \Phi^k \|_\infty ) 
   \| e^{k+1} \|_2 \cdot \| \phi^{k+1} e^{k+1/2} \|_2  
   \le C \| e^{k+1} \|_2 \cdot \| \phi^{k+1} e^{k+1/2} \|_2   
   \nonumber 
\\
  &&  \qquad 
   \le  C  \| e^{k+1} \|_2^2 + \| \phi^{k+1} e^{k+1/2} \|_2    
   = C \| e^{k+1} \|_2^2  + h^2  \ciptwo{ (\phi^{k+1})^2  }{(e^{k+1/2})^2 } ,  
   \label{converge-nAC-8-2}    
   \end{eqnarray} 
with the Cauchy inequality applied in the last step. Consequently, we arrive at 
\begin{eqnarray} 
  - 2 h^2  \ciptwo{ {\cal NLE}_2  }{e^{k+1/2} }  
  \le C ( \| e^{k+1} \|_2^2  + \| e^k \|_2^2 ) 
  + h^2  \ciptwo{ (\phi^{k+1})^2  }{(e^{k+1/2})^2 } .   \label{converge-nAC-8-3} 
\end{eqnarray} 
The estimate for the term associated with ${\cal NLE}_3$ is similar; the details are left to the interested readers: 
\begin{eqnarray} 
  - 2 h^2  \ciptwo{ {\cal NLE}_3  }{e^{k+1/2} }  
  \le C ( \| e^{k+1} \|_2^2  + \| e^k \|_2^2 ) 
  + h^2  \ciptwo{ (\phi^k)^2  }{(e^{k+1/2})^2 } .   \label{converge-nAC-9} 
\end{eqnarray} 
In turn, a combination of (\ref{converge-nAC-7}), (\ref{converge-nAC-8-3}) and (\ref{converge-nAC-9}) into (\ref{converge-nAC-6}) yields the inner product associated with the nonlinear error: 
\begin{eqnarray} 
  - 2 h^2  \ciptwo{  \eta ( \Phi^k , \Phi^{k+1} ) - \eta (  \phi^k , \phi^{k+1} )  }{e^{k+1/2} }  
  \le C ( \| e^{k+1} \|_2^2  + \| e^k \|_2^2 ) .   \label{converge-nAC-10} 
\end{eqnarray} 

  Finally, a substitution of (\ref{converge-nAC-3})-(\ref{converge-nAC-5}) and (\ref{converge-nAC-10}) into (\ref{converge-nAC-2}) indicates that 
\begin{eqnarray}   
\| e^{k+1} \|_2^2 - \| e^k \|_2^2  + 2 B_c s \|  e^{k+1/2} \|_2^2   
   \le C s ( \| e^{k+1} \|_2^2  + \| e^k \|_2^2 + \| e^{k-1} \|_2^2 )  
   + s \| \tau^{k+1} \|_2^2 .   \label{converge-nAC-11}
\end{eqnarray}
An application of discrete Gronwall inequality implies the $\ell^\infty (0,T; \ell^2)$ convergence 
estimate (\ref{converge-nAC-0}), using the local truncation error bound (\ref{truncation-error-nAC-3}). The proof of Theorem~\ref{thm:L2-converge-nAC} is complete.

\begin{rem}   \label{rem:converge-nAC-1}
  In the convergence proof for Theorem~\ref{thm:L2-converge-nAC}, the decomposition (\ref{converge-nAC-6}) has played a key role in the nonlinear error estimates. Due to the well-posed nonlinear inner product in (\ref{converge-nAC-7}), the degree of nonlinearity of the  two other nonlinear inner products could be perfectly controlled, with only the maximum norm bound of the exact solution $\Phi$ needed. 

  As a result of this technique, an estimate for the maximum norm of the numerical solution is avoided, which usually has to be obtained in the nonlinear convergence analysis. Because of this fact, an inverse inequality is not needed in the presented analysis, and the $\ell^\infty (0,T; \ell^2)$ convergence for the nAC equation turns out to be unconditional, i.e., no scaling law between the time step size $s$ and the grid size $h$ is required for the desired convergence result. 
\end{rem} 

\begin{rem}  \label{rem:converge-nAC-2}  
  We also observe that the technical assumption (\ref{discrete-alpha-0}) (for the physical parameters) is not required in the convergence analysis for the nAC equation. 
\end{rem}

\section{Higher order consistency analysis of (\ref{full-disc-cons-flow}) for the nCH equation: asymptotic expansion of the numerical solution}
\label{sec-consistency}

For simplicity of presentation, we denote 
\begin{equation} 
    \tilde{\Phi}^{k+1/2} = \frac12 \left( \tilde{\Phi}^k 
   + \tilde{\Phi}^{k+1}  \right) ,  \, \,  
  \hat{\tilde{\Phi}}^{k+1/2} = \frac32  \tilde{\Phi}^k 
  - \frac12 \tilde{\Phi}^{k-1} . 
\end{equation} 
By consistency, the IPDE solution $\tilde{\Phi}$ solves the discrete equation
	\begin{equation} \label{truncation-error}
\tilde{\Phi}^{k+1} - \tilde{\Phi}^k = s\Dh  \left( \eta \left( \tilde{\Phi}^k , 
\tilde{\Phi}^{k+1} \right) + B_c \tilde{\Phi}^{k+1/2} 
- B_e \hat{\tilde{\Phi}}^{k+1/2} 
  - \convone{J}{\hat{\tilde{\Phi}}^{k+1/2}}  \right) +s \tau^{k+1}  ,
	\end{equation}
where the local truncation error $\tau^{k+1}$ satisfies
	\begin{equation}
\left|\tau_{i,j}^{k+1}\right| \le C_6 \left(h^2 + s^2 \right)  ,
	\end{equation}
for all $i$, $j$, and $k$ for some $C_6\ge0$ that depends only on $T$, $L_1$ and $L_2$.


Meanwhile, it is observed that the leading local truncation error in (\ref{truncation-error}) will not be enough to recover an a-priori 
$W^{1,\infty}$ bound for the 
numerical solution, needed in the stability and convergence analysis. 
To remedy this, we use a higher order consistency analysis, via a perturbation argument, to recover such a bound in later analysis. 
In more detail, we need to construct supplementary fields, $\Phi^1_h$, $\Phi^1_s$, and $\hat{\Phi}$, satisfying
	\begin{equation}
\hat{\Phi} = \tilde{\Phi} + h^2 \Phi_{h,1} + s^2 \Phi_{s,1} ,  
	\label{consistency-1}
	\end{equation} 
so that a higher $O (s^3 + h^4)$ consistency is satisfied with the given numerical scheme (\ref{full-disc-cons-flow}).  The constructed fields $\Phi_{h,1}$, $\Phi_{s,1}$, which will be found using a perturbation expansion, will depend solely on the exact solution $\Phi$.

The following truncation error analysis for the spatial discretization can be obtained by using a straightforward Taylor expansion for the exact solution:  
	\begin{eqnarray} 
\partial_t \tilde{\Phi} = \Delta_h  \left(  \tilde{\Phi}^3 + \left( \convone{J}{{\bf 1}} + \gamma_c - \gamma_e  \right) \tilde{\Phi} - \convone{J}{\tilde{\Phi}}  \right)  + h^2 \g^{(0)} + O(h^4) , \quad \forall\ (i,j) .
	\label{consistency-2}
	\end{eqnarray} 
Here the spatially discrete function $\g^{(0)}$ is smooth enough in the sense that its discrete derivatives are bounded.  Also note that there is no $O(h^3)$ truncation error term, due to the fact that the  centered difference used in the spatial discretization  gives local truncation errors with only even order terms,  $O(h^2)$, $O(h^4)$, etc.  

The spatial correction function $\Phi^1_h$ is given by solving the following equation:  
	\begin{eqnarray} 
\partial_t \Phi_{h,1} = \Delta_h  \left(  3 \tilde{\Phi}^2 \Phi_{h,1}  + \left( \convone{J}{{\bf 1}} + \gamma_c - \gamma_e  \right) \Phi_{h,1} - \convone{J}{\Phi_{h,1}}  \right)  - \g^{(0)} , \quad \forall\ (i,j) .   
	\label{consistency-3}
	\end{eqnarray} 
Existence of a solution of the above linear system of ODEs is a standard exercise. Note that the solution depends only on the exact solution, $\Phi$. In addition, the  divided differences of $\Phi_{h,1}$ of various orders are bounded. 

Now, we define 
	\begin{equation} 
\Phi^*_h := \tilde{\Phi} + h^2 \Phi_{h,1} .
	\label{consistency-4}
	\end{equation} 
A combination of (\ref{consistency-2}) and (\ref{consistency-3}) leads to the fourth order local truncation error for $\Phi^*_h$: 
	\begin{eqnarray} 
\partial_t \Phi^*_h = \Delta_h  \left(  \left( \Phi^*_h \right)^3 + \left(\convone{J}{{\bf 1}} + \gamma_c - \gamma_e  \right) \Phi^*_h   - \convone{J}{\Phi^*_h}  \right)  + O(h^4) , \quad \forall\ (i,j),  
	\label{consistency-5}
	\end{eqnarray} 
for which the following estimate was used: 
	\begin{equation} 
\left( \Phi^*_h \right)^3 = \left( \tilde{\Phi} + h^2 \Phi_{h,1} \right)^3 
= \tilde{\Phi}^3 + 3 h^2 \tilde{\Phi}^2 \Phi_{h,1} + O (h^4) .  
	\label{consistency-6}
	\end{equation} 
We remark that the above derivation is valid since  all $O(h^2)$ terms cancel in the expansion.

Regarding the temporal correction term, we observe that the application  of the second order convex splitting scheme (\ref{full-disc-cons-flow}) for the  profile $\Phi^*_h$ gives
	\begin{eqnarray} 
\frac{ \left( \Phi^*_h \right)^{k+1} - \left( \Phi^*_h \right)^k}{s}  &=& \Delta_h  \Biggl(  \eta \left( ( \Phi^*_h )^k , ( \Phi^*_h )^{k+1}  \right) 
  + B_c \left( \Phi^*_h \right)^{k+1/2}  
  - B_e  \hat{ \Phi^*_h }^{k+1/2}  \nonumber 
\\
  && 
    - \convone{J}{\hat{ \Phi^*_h }^{k+1/2}}  \Biggr) + s^2 \h^{(1)} 
  + O (s^3) + O(h^4)  ,    \quad \forall \ (i,j) ,  
	\label{consistency-7} 
\\
  \mbox{with}  && 
   \left( \Phi^*_h \right)^{k+1/2} 
  = \frac12 \left( \left( \Phi^*_h \right)^k 
  + \left( \Phi^*_h \right)^{k+1} \right)  ,  \, \, 
    \hat{ \Phi^*_h }^{k+1/2} 
  = \frac32 \left( \Phi^*_h \right)^k 
  - \frac12 \left( \Phi^*_h \right)^{k-1} ,  \nonumber 
	\end{eqnarray} 
at any grid point $(i,j)$.   
In turn, the first order temporal correction function  $\Phi_{s,1}$ is given by the solution of the following system of linearized ordinary differential equations  
	\begin{eqnarray} 
\partial_t \Phi_{s,1}  &=& \Delta_h  \Bigl(  
  3 \left( \Phi^*_h \right)^2 \Phi_{s,1} + \left( \convone{J_c}{{\bf 1}}  - \convone{J_e}{{\bf 1}} + \gamma_c  - \gamma_e  \right) \Phi_{s,1}  
  - \convone{J}{\Phi_{s,1}}  \Bigr)  - \h^{(1)}  . 
	\label{consistency-8}
	\end{eqnarray} 
Again, the solution of (\ref{consistency-8}), which exists and is unique, depends solely on the profile $\Phi^*_h$ and is smooth enough in the sense that its  divided differences of various orders are bounded.  
Similar to (\ref{consistency-7}), an application of the second order convex splitting scheme to $\Phi_{s,1}$ reads 
	\begin{eqnarray} 
\frac{ \left( \Phi_{s,1} \right)^{k+1} - \left( \Phi_{s,1} \right)^k}{s}  
 &=& \Delta_h  \Biggl(  \frac12 \left( ( \Phi^*_h )^k 
  + ( \Phi^*_h )^{k+1} \right)  
  \left(  ( \Phi^*_h )^k \Phi_{s,1}^k 
  + ( \Phi^*_h )^{k+1} \Phi_{s,1}^{k+1}  \right)  
  \nonumber 
\\
  && \quad \, \, 
  + \frac14 \left( \Phi_{s,1}^k +  \Phi_{s,1}^{k+1} \right)  
  \left(  \left( ( \Phi^*_h )^k \right)^2  
  + \left( ( \Phi^*_h )^{k+1}  \right)^2  \right)  
  \nonumber 
\\
  && 
  + B_c \Phi_{s,1}^{k+1} 
  - B_e \hat{\Phi_{s,1}}^{k+1/2} 
   - \convone{J}{\hat{ \Phi_{s,1} }^{k+1/2}}  \Biggr)
	\nonumber 
	\\
&&   
  - ( \h^{(1)} )^k + s (\h^{(2)} )^k + O (s^2) + O(s h^2) ,  \quad \forall \ (i,j) , 
     \label{consistency-9}
\\
   \mbox{with}  && 
    \Phi_{s,1}^{k+1/2} 
  = \frac12 \left(  \Phi_{s,1}^k + \Phi_{s,1}^{k+1} \right)  ,  \, \, 
    \hat{ \Phi_{s,1} }^{k+1/2} 
  = \frac32 \Phi_{s,1}^k - \frac12 \Phi_{s,1}^{k-1} .  \nonumber
	\end{eqnarray}     

Therefore, a combination of (\ref{consistency-7}) and  
(\ref{consistency-9}) shows that 
	\begin{eqnarray} 
\frac{ \hat{\Phi}^{k+1} - \hat{\Phi}^k}{s} &=& \Delta_h  \Bigl(  
 \eta \left(  \hat{\Phi}^k , \hat{\Phi}^{k+1} \right) 
  + B_c \hat{\Phi}^{k+1/2} 
  - B_e \hat{\hat{\Phi}}^{k+1/2}
	\nonumber 
	\\
&& - \convone{J}{\hat{ \hat{\Phi} }^{k+1/2}}  \Bigr) 
  + O (s^3 + s^3 h^2 + h^4) , 
  \quad \forall \ (i,j) ,  
	\label{consistency-10}
\\
   \mbox{with}  && 
    \hat{\Phi}^{k+1/2} 
  = \frac12 \left(  \hat{\Phi}^k + \hat{\Phi}^{k+1} \right)  ,  \, \, 
    \hat{ \hat{\Phi} }^{k+1/2} 
  = \frac32 \hat{\Phi}^k - \frac12 \hat{\Phi}^{k-1} ,  \nonumber
	\end{eqnarray}    
in which the construction (\ref{consistency-1}) for the approximate 
solution $\hat{\Phi}$ is recalled and we have used the following estimate 
\begin{eqnarray} 
  &&
    \eta \left(  \hat{\Phi}^k , \hat{\Phi}^{k+1}  \right) 
  =  \eta \left(  (\Phi^*_h )^k + s^2 \Phi_{s,1}^k , 
    (\Phi^*_h )^{k+1} + s^2 \Phi_{s,1}^{k+1}  \right)   \nonumber 
\\
  &=& 
    \eta \left(  (\Phi^*_h )^k ,  (\Phi^*_h )^{k+1}   \right)  
  + \frac12 s^2 \left( ( \Phi^*_h )^k 
  + ( \Phi^*_h )^{k+1} \right)  
  \left(  ( \Phi^*_h )^k \Phi_{s,1}^k 
  + ( \Phi^*_h )^{k+1} \Phi_{s,1}^{k+1}  \right)  
  \nonumber 
\\
  && \quad \, \, 
  + \frac14 s^2 \left( \Phi_{s,1}^k +  \Phi_{s,1}^{k+1} \right)  
  \left(  \left( ( \Phi^*_h )^k \right)^2  
  + \left( ( \Phi^*_h )^{k+1}  \right)^2  \right)   + O (s^4)  .  
  \label{consistency-11}
\end{eqnarray}

\begin{rem} 
For both correction terms, $\Phi_{h,1}$ and $\Phi_{s,1}$, a trivial initial 
data are taken: $\Phi_{h,1} ( \cdot, t=0) \equiv 0$, 
$\Phi_{s,1} ( \cdot, t=0)  \equiv 0$, as given by (\ref{consistency-3}), 
(\ref{consistency-8}), respectively. In addition, both equations are mass 
conserving at a discrete level. Then we conclude that 
\begin{eqnarray} 
     \phi^0 \equiv  \hat{\Phi}^0 ,  \qquad  
     \ciptwo{\phi^k-\tilde{\Phi}^k}{{\bf 1}} = 0  ,  \quad 
   \forall k \ge 0 .    \label{consistency-12}
\end{eqnarray}
These two properties will be used in later analysis. 
\end{rem}

\section{Convergence proof for the nCH equation} 
\label{sec-convergence}

As stated earlier, the purpose of the higher order expansion (\ref{consistency-1}) is to obtain a $W^{1,\infty}$ bound of  the error function via its $L^2$ norm in higher order accuracy  by utilizing an inverse inequality in spatial discretization,  which will be shown below.  A detailed analysis shows that 
	\begin{eqnarray}
   \nrm{  \hat{\Phi} - \Phi }_\infty 
  +  \nrm{  \nabla_h \left( \hat{\Phi} - \Phi \right) }_\infty  
   \le C ( s^2 + h^2 )  , 
	\label{converge-1}
	\end{eqnarray}
since $\nrm{\Phi_{h,1} }_\infty, \nrm{\nabla_h \Phi_{h,1} }_\infty , 
\nrm{\Phi_{s,1} }_\infty , \nrm{\nabla_h \Phi_{s,1} }_\infty \le C$.  Subsequently, the following error function is considered:  
	\begin{equation}
\hat{e}^k_{i,j} := \hat{\Phi}^k_{i,j} - \phi^k_{i,j}.  
	\label{error function-2}
	\end{equation} 
In other words, instead of a direct comparison between the numerical solution $\phi$ and the exact solution $\Phi$ (or $\tilde{\Phi}$), we estimate the error between the numerical solution and the constructed solution to obtain a higher order convergence in the $\nrm{\, \cdot\, }_2$ norm, which follows the technique originally proposed in~\cite{strang64}. 
Subtracting (\ref{full-disc-cons-flow}) from (\ref{consistency-10}) yields
	\begin{eqnarray}
\hat{e}^{k+1} - \hat{e}^k &=& s\Dh\Bigl( 
  \eta \left(  \hat{\Phi}^k , \hat{\Phi}^{k+1} \right) 
  - \eta \left(  \phi^k , \phi^{k+1} \right) 
  + B_c \hat{e}^{k+1/2} 
  -  B_e \hat{\hat{e}}^{k+1/2}   \nonumber 
\\
  &&  \quad 
   - \convone{J}{\hat{ \hat{e} }^{k+1/2}}  \Bigr)
  + s  \hat{\tau}_{i,j}^k ,  \quad \left| \hat{\tau}_{i,j}^k \right| 
  \le C ( s^3 + h^4 ) ,   
   \label{converge-2}
\\
   \mbox{with}  && 
    \hat{e}^{k+1/2} 
  = \frac12 \left(  \hat{e}^k + \hat{e}^{k+1} \right)  ,  \, \, 
    \hat{ \hat{e} }^{k+1/2} 
  = \frac32 \hat{e}^k - \frac12 \hat{e}^{k-1} .  \nonumber
	\end{eqnarray}

\subsection{Preliminary error estimates for linear terms} 

        \begin{prop}
	\label{estimate-linear}
  We have 
	\begin{eqnarray} 
  - 2 h^2 \ciptwo{\convone{J}{ \hat{ \hat{e} }^{k+1/2}  }}{\Dh \hat{e}^{k+1/2}} \le \frac{C_7}{\alpha} \left( \nrm{\hat{e}^{k}}^2_2 
  +  \nrm{\hat{e}^{k-1}}^2_2  \right) 
  +  \alpha \nrm{\nabla_h \hat{e}^{k+1/2}}^2_2 ,  \quad \forall \alpha > 0 , 
  \label{est-linear-1}
\\
  - 2 h^2 \ciptwo{ \hat{ \hat{e} }^{k+1/2} }{\Dh \hat{e}^{k+1/2}} 
  \le  - \left( \nrm{ \nabla_h \hat{e}^{k+1} }_2^2   
  - \nrm{ \nabla_h \hat{e}^{k} }_2^2  \right) 
  + 5  \nrm{ \nabla_h \hat{e}^{k+1/2} }_2^2  
  +   \nrm{ \nabla_h \hat{e}^{k-1/2} }_2^2 ,  
  \label{est-linear-2}
	\end{eqnarray} 
	\end{prop}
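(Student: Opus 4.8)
The plan is to handle the two estimates independently, since \eqref{est-linear-1} is a nonlocal (convolution) bound while \eqref{est-linear-2} is a purely local (Laplacian) one that must produce a telescoping structure.

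For \eqref{est-linear-1} I would simply invoke Lemma~\ref{lem:error-4-2d}. Since $\msfJ=\msfJ_c-\msfJ_e$ is even and smooth by assumptions A1--A2, its vertex restriction $J$ meets the hypotheses of that lemma, so applying it with $\psi=\hat{\hat{e}}^{k+1/2}$ and $\phi=\hat{e}^{k+1/2}$ gives
\[
-2h^2\ciptwo{\convone{J}{\hat{\hat{e}}^{k+1/2}}}{\Delta_h\hat{e}^{k+1/2}}\le\frac{C_2}{\alpha}\nrm{\hat{\hat{e}}^{k+1/2}}_2^2+\alpha\nrm{\nabla_h\hat{e}^{k+1/2}}_2^2.
\]
It then remains to dominate $\nrm{\hat{\hat{e}}^{k+1/2}}_2^2$. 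Using $\hat{\hat{e}}^{k+1/2}=\tfrac32\hat{e}^k-\tfrac12\hat{e}^{k-1}$ with the elementary bound $(a+b)^2\le 2a^2+2b^2$ yields $\nrm{\hat{\hat{e}}^{k+1/2}}_2^2\le\tfrac92\nrm{\hat{e}^k}_2^2+\tfrac12\nrm{\hat{e}^{k-1}}_2^2$, and absorbing the numerical factors into $C_7:=\tfrac92 C_2$ gives \eqref{est-linear-1}. This part is routine and needs no refinement-path constraint.

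For \eqref{est-linear-2} I would first apply discrete summation by parts for periodic grid functions, namely $-h^2\ciptwo{\psi}{\Delta_h\phi}=h^2\ciptwo{\nabla_h\psi}{\nabla_h\phi}$, to rewrite the left side as $2h^2\ciptwo{\nabla_h\hat{\hat{e}}^{k+1/2}}{\nabla_h\hat{e}^{k+1/2}}$. The decisive observation is the identity
\[
\hat{\hat{e}}^{k+1/2}=\hat{e}^{k+1/2}-\tfrac12\left(\hat{e}^{k+1}-2\hat{e}^k+\hat{e}^{k-1}\right),
\]
which exposes the underlying second-difference. Expanding the inner product directly into the three squared norms $\nrm{\nabla_h\hat{e}^{k+1}}_2^2$, $\nrm{\nabla_h\hat{e}^k}_2^2$, $\nrm{\nabla_h\hat{e}^{k-1}}_2^2$ and the three cross products, I would compare term by term against the claimed right-hand side. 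The clean way to finish is to verify the \emph{exact} identity
\[
\text{(right side of \eqref{est-linear-2})}-\Bigl(-2h^2\ciptwo{\hat{\hat{e}}^{k+1/2}}{\Delta_h\hat{e}^{k+1/2}}\Bigr)=\nrm{\tfrac12\nabla_h\hat{e}^{k+1}+\nabla_h\hat{e}^k+\tfrac12\nabla_h\hat{e}^{k-1}}_2^2\ge 0,
\]
so that \eqref{est-linear-2} holds, with equality modulo this nonnegative square. Note that the discrete Young inequality \eqref{lemma:discrete_young-2d} is not needed here; the estimate is an algebraic identity.

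The minor part is \eqref{est-linear-1}; the only genuine work is \eqref{est-linear-2}. The main obstacle there is the bookkeeping needed to show that the specific coefficients $5$ and $1$ in the stated bound are precisely those for which the leftover cross terms assemble into the perfect square $\nrm{\tfrac12\nabla_h\hat{e}^{k+1}+\nabla_h\hat{e}^k+\tfrac12\nabla_h\hat{e}^{k-1}}_2^2$. A careless split via Young's inequality would produce nonsharp constants and, more seriously, destroy the telescoping term $-\bigl(\nrm{\nabla_h\hat{e}^{k+1}}_2^2-\nrm{\nabla_h\hat{e}^k}_2^2\bigr)$ that is essential for the subsequent discrete Gronwall argument. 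I would therefore carry out the expansion exactly rather than estimating the cross terms.
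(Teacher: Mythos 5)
Your proposal is correct and follows essentially the same route as the paper: estimate \eqref{est-linear-1} is Lemma~\ref{lem:error-4-2d} applied to $\psi=\hat{\hat{e}}^{k+1/2}$, $\phi=\hat{e}^{k+1/2}$ followed by a crude norm bound on $\hat{\hat{e}}^{k+1/2}=\tfrac32\hat{e}^k-\tfrac12\hat{e}^{k-1}$, and estimate \eqref{est-linear-2} is summation by parts plus algebra on the three time levels. Your perfect-square identity for \eqref{est-linear-2} checks out and is in fact exactly the slack in the paper's single Cauchy step $-2h^2\ciptwo{\nabla_h\hat{e}^{k+1/2}}{\nabla_h\hat{e}^{k-1/2}}\le\nrm{\nabla_h\hat{e}^{k+1/2}}_2^2+\nrm{\nabla_h\hat{e}^{k-1/2}}_2^2$ after the rewriting $\hat{\hat{e}}^{k+1/2}=-(\hat{e}^{k+1}-\hat{e}^k)+2\hat{e}^{k+1/2}-\hat{e}^{k-1/2}$, so the two derivations coincide term by term, including the telescoping term and the coefficients $5$ and $1$.
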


\begin{proof} 
The first inequality (\ref{est-linear-1}) is a direct application of
Lemma~\ref{lem:error-4-2d} and Cauchy inequality:  
	\begin{eqnarray}
- 2 h^2 \ciptwo{\convone{J}{ \hat{ \hat{e} }^{k+1/2} }}{\Dh \hat{e}^{k+1/2}} 
  &\le& \frac{s C_2}{\alpha} \nrm{ \hat{ \hat{e} }^{k+1/2} }^2_2 
  + \alpha \nrm{\nabla_h \hat{e}^{k+1/2}}^2_2   \nonumber 
\\
  &\le& 
     \frac{C_2}{\alpha} \left( 3 \nrm{ \hat{e}^k }^2_2  
  + \nrm{ \hat{e}^{k-1} }^2_2  \right) 
  + \alpha \nrm{\nabla_h \hat{e}^{k+1/2}}^2_2 . 
	\end{eqnarray}

For the second inequality (\ref{est-linear-2}), we start from the summation 
by parts: 
\begin{eqnarray} 
  - 2 h^2 \ciptwo{ \hat{ \hat{e} }^{k+1/2} }{\Dh \hat{e}^{k+1/2}}  
  =  2 h^2 \ciptwo{ \nabla_h \hat{ \hat{e} }^{k+1/2} }
   {\nabla_h \hat{e}^{k+1/2}} 
  =  h^2 \ciptwo{ \nabla_h \hat{ \hat{e} }^{k+1/2} }
  {\nabla_h \left( \hat{e}^k + \hat{e}^{k+1} \right) } .    
  \label{est-linear-3}
	\end{eqnarray} 
Meanwhile, the term $\hat{ \hat{e} }^{k+1/2}$ can be rewritten as 
\begin{eqnarray} 
  \hat{ \hat{e} }^{k+1/2}  
  = \frac32 \hat{e}^k - \frac12 \hat{e}^{k-1}  
  = - \left( \hat{e}^{k+1} - \hat{e}^k \right) 
  + 2 \hat{e}^{k+1/2} - \hat{e}^{k-1/2}  ,  
  \label{est-linear-4}
\end{eqnarray}
which in turn gives the following estimate: 
\begin{eqnarray} 
  &&
  2 h^2 \ciptwo{ \nabla_h \hat{ \hat{e} }^{k+1/2} }
   {\nabla_h \hat{e}^{k+1/2}}    \nonumber 
\\
  &=& 
   - h^2 \ciptwo{ \nabla_h \left( \hat{e}^{k+1} - \hat{e}^k \right)  }
   {\nabla_h \left( \hat{e}^{k+1} + \hat{e}^k \right) }  
  + 4 \nrm{ \nabla_h \hat{e}^{k+1/2} }_2^2 
  - 2 h^2 \ciptwo{ \nabla_h \hat{e}^{k+1/2} }
   {\nabla_h \hat{e}^{k-1/2} }    \nonumber
\\
  &=& 
   - \left( \nrm{ \nabla_h \hat{e}^{k+1} }_2^2   
  - \nrm{ \nabla_h \hat{e}^{k} }_2^2  \right) 
  + 4 \nrm{ \nabla_h \hat{e}^{k+1/2} }_2^2 
  - 2 h^2 \ciptwo{ \nabla_h \hat{e}^{k+1/2} }
   {\nabla_h \hat{e}^{k-1/2} }    \nonumber 
\\
  &\le&  
   - \left( \nrm{ \nabla_h \hat{e}^{k+1} }_2^2   
  - \nrm{ \nabla_h \hat{e}^{k} }_2^2  \right) 
  + 5  \nrm{ \nabla_h \hat{e}^{k+1/2} }_2^2  
  +   \nrm{ \nabla_h \hat{e}^{k-1/2} }_2^2 .  
  \label{est-linear-5}
\end{eqnarray}
Finally, its combination with (\ref{est-linear-3}) results in 
(\ref{est-linear-2}). The proof of Proposition~\ref{estimate-linear} is complete. 
\end{proof}

\subsection{Preliminary nonlinear error estimates}  

  The $W^{1, \infty}$ bound for the constructed approximate solution 
$\hat{\Phi}$ is guaranteed by the regularity of the exact solution $\Phi$ 
(and henceforth $\tilde{\Phi}$ and the correction terms $\Phi_{h,1}$, 
$\Phi_{s,1}$), at any time step. Similarly, its divided difference in time is also 
bounded point-wise, which comes from the regularity in time for the constructed solution. For the numerical solution $\phi$, its 
global in time $\ell^4$ has been derived in Theorem~\ref{lp-stable-thm}. 
Moreover, to carry out the error estimate for the nonlinear term, 
we need to make an a-priori $W^{1,\infty}$ assumption for the numerical solution at time step $t^k$, and use the $O (s^3 + h^4)$ order convergence 
in $\ell^2$ to recover such an assumption at the next time step $t^{k+1}$. 

	\begin{prop}
	\label{estimate-nonlinear}
Suppose $\phi^j,\,  \hat{\Phi}^j \in \mathcal{C}_{m\times n}$, are periodic with equal means, \emph{i.e.}, 
$\ciptwo{\phi^j-\hat{\Phi}^j}{{\bf 1}}=0$, $j=k, k+1$, and satisfying
	\begin{eqnarray}
  && 
  \nrm{\hat{\Phi}^k}_4 + \nrm{\hat{\Phi}^k}_{\infty} 
+ \nrm{\nabla_h \hat{\Phi}^k }_{\infty} 
  + \nrm{\hat{\Phi}^{k+1} }_4 + \nrm{\hat{\Phi}^{k+1} }_{\infty} 
+ \nrm{\nabla_h \hat{\Phi}^{k+1} }_{\infty} \le  C_{0},  \label{a priori-1} 
\\
  && 
  \nrm{\frac{\hat{\Phi}^{k+1} - \hat{\Phi}^k }{s} }_{\infty}  \le  C_{0},  
  \label{a priori-2} 
\\
  && 
  \nrm{\phi^k}_4 \le C_{0},  \, \,  \nrm{\phi^{k+1} }_4 \le C_{0},  
   \label{a priori-3}   
\\
  &&
  \nrm{\phi^k}_{\infty} + \nrm{\nabla_h \phi^k }_{\infty} \le  C_{0} , 
   \label{a priori-4} 
	\end{eqnarray}
where $C_{0}$ is an $s$, $h$-independent positive constant. Then, there exists a positive constant $C_1$, which depends on $C_0$ but is independent of 
$s$ and $h$, such that
	\begin{eqnarray}
  &&
2h^2\ciptwo{  \eta \left(  \hat{\Phi}^k , \hat{\Phi}^{k+1} \right) 
  - \eta \left(  \phi^k , \phi^{k+1} \right)  }{\Delta_h 
  \hat{e}^{k+1/2} }   \nonumber 
\\
  &\le&
      \frac{C_1}{\alpha^3} \left(  \nrm{ \hat{e}^k }_2^2  
   + \nrm{ \hat{e}^{k+1} }_2^2  \right)
   + \alpha \nrm{\nabla_h  \hat{e}^{k+1/2}  }_2^2   \nonumber 
\\
  &&  
   + \frac{C_1}{\alpha} s^2 \nrm{\nabla_h  \hat{e}^k  }_2^2
   + \frac{C_1}{\alpha} \left( 1 + \nrm{ \phi^{k+1} }_\infty^2 \right)  
    \nrm{ \hat{e}^k }_2^2 ,  \quad  \forall \alpha > 0 .  
  \label{est-nonlinear-1}
	\end{eqnarray}
	\end{prop}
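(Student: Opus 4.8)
The plan is to turn the cubic inner product into a sum of terms each governed by the assumed a priori bounds, using a good-sign structure for the leading part and a substitution that avoids differentiating the one factor, $\phi^{k+1}$, that is not controlled. First I would record the exact cubic identity already verified in the nAC analysis, namely the decomposition \eqref{converge-nAC-6}, now read with $e\mapsto\hat e$ and $\Phi\mapsto\hat\Phi$:
\[
\eta\left(\hat\Phi^k,\hat\Phi^{k+1}\right)-\eta\left(\phi^k,\phi^{k+1}\right) = {\cal NLE}_1+{\cal NLE}_2+{\cal NLE}_3,
\]
where ${\cal NLE}_1=\frac12\left((\phi^{k+1})^2+(\phi^k)^2\right)\hat e^{k+1/2}$, ${\cal NLE}_2=\frac14(\hat\Phi^{k+1}+\phi^{k+1})(\hat\Phi^{k+1}+\hat\Phi^k)\hat e^{k+1}$ and ${\cal NLE}_3=\frac14(\hat\Phi^k+\phi^k)(\hat\Phi^{k+1}+\hat\Phi^k)\hat e^k$. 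I would then move the discrete Laplacian by summation by parts on the periodic grid,
\[
2h^2\ciptwo{{\cal NLE}_1+{\cal NLE}_2+{\cal NLE}_3}{\Delta_h\hat e^{k+1/2}} = -2h^2\ciptwo{\nabla_h\left({\cal NLE}_1+{\cal NLE}_2+{\cal NLE}_3\right)}{\nabla_h\hat e^{k+1/2}},
\]
so that the cubic nonlinearity, rather than the test function, carries the difference operator.

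Next I would expand each $\nabla_h{\cal NLE}_i$ with the symmetric discrete product rule $\nabla_h(fg)=\bar f\,\nabla_h g+\bar g\,\nabla_h f$ (the bars denoting edge averages), and sort the resulting pieces according to which factor is differenced. The decisive point is that the weight $(\phi^{k+1})^2+(\phi^k)^2$ in ${\cal NLE}_1$ is non-negative: the piece in which $\nabla_h$ lands on $\hat e^{k+1/2}$ is
\[
-h^2\ciptwo{(\phi^{k+1})^2+(\phi^k)^2}{\left|\nabla_h\hat e^{k+1/2}\right|^2}\le 0,
\]
which I simply discard. This is the analog of the good nonlinear term \eqref{converge-nAC-7}, and it is what keeps the nonlinearity under control without any $\ell^\infty$ bound on the error itself. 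Every remaining piece carries the difference operator on one of the coefficient factors, which are products of $\hat\Phi^k,\hat\Phi^{k+1}$ (bounded in $W^{1,\infty}$ by \eqref{a priori-1}), of $\phi^k$ (bounded in $W^{1,\infty}$ by \eqref{a priori-4}), and of $\phi^{k+1}$.

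The main obstacle is exactly the factor $\phi^{k+1}$: the coefficient-gradient pieces of ${\cal NLE}_1$ and ${\cal NLE}_2$ would call for $\nabla_h\phi^{k+1}$, for which nothing is assumed---\eqref{a priori-4} controls $\nabla_h\phi^k$ but nothing at level $k+1$, and \eqref{a priori-3} gives only an $\ell^4$ bound on $\phi^{k+1}$. I would resolve this as the abstract advertises, by never differentiating $\phi^{k+1}$: wherever it appears as a coefficient I substitute $\phi^{k+1}=\hat\Phi^{k+1}-\hat e^{k+1}$, so the difference operator falls only on $\hat\Phi^{k+1}$ (bounded) and on $\hat e^{k+1}$ (entering in the controllable gradient form $\nabla_h\hat e^{k+1}$). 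The price is an undifferentiated copy of $\phi^{k+1}$, estimated in $\ell^\infty$; this is what produces the explicit multiplier $\left(1+\nrm{\phi^{k+1}}_\infty^2\right)$ on $\nrm{\hat e^k}_2^2$ in \eqref{est-nonlinear-1}. That factor is deliberately retained rather than bounded, since the $W^{1,\infty}$ bound on $\phi^{k+1}$ is precisely what the surrounding induction is trying to recover.

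Finally I would close by repeated Cauchy--Schwarz/Young and discrete H\"older estimates, fed by the $W^{1,\infty}$ bounds on $\hat\Phi$ and $\phi^k$, the $\ell^4$ bounds \eqref{a priori-3}, and the discrete Young inequality \eqref{lemma:discrete_young-2d} for the remaining products; each splitting against $\nabla_h\hat e^{k+1/2}$ is carried out with weight $\alpha$, generating the single absorbing term $\alpha\nrm{\nabla_h\hat e^{k+1/2}}_2^2$ while the conjugate factors accumulate the $\alpha^{-1}$ and $\alpha^{-3}$ powers (the genuinely cubic pieces, split three times, are responsible for $\alpha^{-3}$). The $s^2\nrm{\nabla_h\hat e^k}_2^2$ term arises from the pieces that retain a $\nabla_h\hat e^k$ factor paired against $\nabla_h\hat e^{k+1/2}$ together with an $O(s)$ temporal coefficient difference (supplied by \eqref{a priori-2}, so that $\hat\Phi^{k+1}-\hat\Phi^k=O(s)$ in $\ell^\infty$): keeping the $s$ with the $\nabla_h\hat e^k$ side, Young's inequality yields $\frac{s^2}{\alpha}\nrm{\nabla_h\hat e^k}_2^2+\alpha\nrm{\nabla_h\hat e^{k+1/2}}_2^2$, which is exactly the higher-order form needed once the whole estimate is multiplied by $s$ in the assembled error inequality. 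The only delicate part is the bookkeeping that lands precisely on the stated powers of $\alpha$ and $s$; the individual bounds are routine.
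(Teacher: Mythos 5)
Your overall strategy (summation by parts, a good-sign leading term, refusing to difference $\phi^{k+1}$, Sobolev embedding plus Young for the $\ell^4$ pieces, retaining $\nrm{\phi^{k+1}}_\infty$ explicitly) matches the spirit of the paper's proof, but the concrete decomposition you start from does not deliver the stated bound. The problem is where the difference operator lands on the \emph{error}. Starting from the nAC-style splitting \eqref{converge-nAC-6} and applying the symmetric product rule, the coefficient-gradient piece of ${\cal NLE}_1$ forces you to difference $(\phi^{k+1})^2$, and your substitution $\phi^{k+1}=\hat\Phi^{k+1}-\hat e^{k+1}$ converts this into a term of the schematic form $\overline{\hat e^{k+1/2}}\;\overline{\phi^{k+1}}\;\nabla_h\hat e^{k+1}\cdot\nabla_h\hat e^{k+1/2}$; likewise the pieces of ${\cal NLE}_2$ and ${\cal NLE}_3$ in which the gradient falls on $\hat e^{k+1}$ or $\hat e^k$ produce $\nabla_h\hat e^{k+1}\cdot\nabla_h\hat e^{k+1/2}$ and $\nabla_h\hat e^{k}\cdot\nabla_h\hat e^{k+1/2}$ with $O(1)$ coefficients. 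None of these can be estimated by the right-hand side of \eqref{est-nonlinear-1}: there is no $\nrm{\nabla_h\hat e^{k+1}}_2^2$ slot at all, and $\nrm{\nabla_h\hat e^{k}}_2^2$ is admissible only with the crucial $s^2$ prefactor. Writing $\nabla_h\hat e^{k+1}=2\nabla_h\hat e^{k+1/2}-\nabla_h\hat e^{k}$ does not help, since the resulting $|\nabla_h\hat e^{k+1/2}|^2$ term carries a sign-indefinite coefficient $\overline{\hat e^{k+1/2}}\,\overline{\phi^{k+1}}$ of uncontrolled size (no $\ell^\infty$ bound on the error at level $k+1$ is available at this stage), so it cannot be absorbed into $\alpha\nrm{\nabla_h\hat e^{k+1/2}}_2^2$ for arbitrary $\alpha$, and the residual $\nrm{\nabla_h\hat e^{k}}_2^2$ appears without $s^2$, which would wreck the Gronwall step. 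This is exactly the obstruction the paper flags: only a $\nrm{\nabla_h\hat e^{k+1/2}}_2^2$ diffusion term is available for absorption, not one at a full time level.

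The paper circumvents this by a different algebraic expansion: it differences the whole cubic form $\eta(\phi^k,\phi^{k+1})$ at once (see \eqref{est-nonlinear-3}) so that the only difference quotients appearing are $D_x\phi^{k+1/2}$ and $D_x\phi^k$, then forms the error so that the outcome \eqref{est-nonlinear-4-1}--\eqref{est-nonlinear-4-4} contains the error under a difference quotient only as $D_x\hat e^{k+1/2}$ --- multiplied by the coefficient ${\cal C}_1$, which is not discarded as manifestly non-negative but shown to be \emph{almost} non-negative with an $O(h)$ defect controlled by the $W^{1,\infty}$ bound on $\phi^k$ --- or as $D_x\hat e^{k}$ multiplied by an $O(s)$ coefficient supplied by \eqref{a priori-2}. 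All other occurrences of the error are left undifferenced, rewritten in terms of $\hat e^{k+1/2}$ and $\hat e^k$ (never $\hat e^{k+1}$ alone with an $\ell^\infty$-type coefficient), and handled in $\ell^4$ via \eqref{est-nonlinear-13} and Young with exponents $4$ and $4/3$ (this, rather than a triple splitting, is the source of $\alpha^{-3}$) or in $\ell^2$ against $\nrm{\phi^{k+1}}_\infty$. Your proposal needs to be rebuilt around an expansion with this property; as written, the key step fails.
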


\begin{proof} 
 For simplicity of presentation, we denote 
${\cal NLE}^k = \eta \left(  \hat{\Phi}^k , \hat{\Phi}^{k+1} \right) 
  - \eta \left(  \phi^k , \phi^{k+1} \right)$. 
A direct application of summation by parts reveals that 
\begin{eqnarray}
  &&
2h^2\ciptwo{  {\cal NLE}^k  }{\Delta_h 
  \hat{e}^{k+1/2} }    
   = - 2h^2\ciptwo{  \nabla_h \left( {\cal NLE}^k \right) }
  {\nabla_h \hat{e}^{k+1/2} }    
  = {\cal NLI}_1^k +  {\cal NLI}_2^k ,   \quad 
  \mbox{with} \label{est-nonlinear-2}   
\\
  &&
   {\cal NLI}_1^k = - 2h^2\ciptwo{  D_x \left( {\cal NLE}^k \right) }
  {D_x \hat{e}^{k+1/2} }  ,  \quad 
   {\cal NLI}_2^k = - 2h^2\ciptwo{  D_y \left( {\cal NLE}^k \right) }
  {D_y \hat{e}^{k+1/2} } .  \nonumber 
	\end{eqnarray}

We focus on the first term ${\cal NLI}_1^k$; the second term 
${\cal NLI}_2^k$ can be analyzed in the same way. In $x$ direction, we 
drop the subscript $j$ in the grid index, just for simplicity of presentation. 
A detailed expansion 
shows that 
\begin{eqnarray} 
   &&
    D_x \left( \eta \left(  \phi^k , \phi^{k+1} \right)  \right)_{i+1/2}   
   \nonumber 
\\
  &=&
   \frac{1}{4 h} \left(   \left(  ( \phi_{i+1}^{k+1} )^2 
  + ( \phi_{i+1}^k )^2  \right) 
   \left(  \phi_{i+1}^{k+1} + \phi_{i+1}^k  \right) 
   - \left(  ( \phi_{i}^{k+1} )^2 + ( \phi_{i}^k )^2  \right) 
   \left(  \phi_{i}^{k+1} + \phi_{i}^k  \right)   \right)    \nonumber 
\\
  &=&
   \frac{1}{4 h}   \left(  ( \phi_{i+1}^{k+1} )^2 
  + ( \phi_{i+1}^k )^2  
  +  (  \phi_{i+1}^{k+1} +  \phi_{i}^{k+1} )  
      ( \phi_{i}^{k+1} + \phi_{i}^k ) \right) 
   \left(  \phi_{i+1}^{k+1} - \phi_{i}^{k+1} 
   + \phi_{i+1}^k  -  \phi_{i}^k   \right)  \nonumber 
\\
  &&
   + \frac{1}{4h} \left(  \phi_{i+1}^k - \phi_{i+1}^{k+1} 
  + \phi_{i}^k - \phi_{i}^{k+1} \right) 
   \left(  \phi_{i}^{k+1} + \phi_{i}^k  \right)  
   \left( \phi_{i+1}^k  -  \phi_{i}^k   \right)   \nonumber 
\\
  &=&
   \frac12   \left(  ( \phi_{i+1}^{k+1} )^2 
  + ( \phi_{i+1}^k )^2  
  +  (  \phi_{i+1}^{k+1} +  \phi_{i}^{k+1} )  
      ( \phi_{i}^{k+1} + \phi_{i}^k ) \right) 
   D_x \phi^{k+1/2}_{i+1/2}   \nonumber 
\\
  &&
   + \frac14 \left(  \phi_{i+1}^k - \phi_{i+1}^{k+1} 
  + \phi_{i}^k - \phi_{i}^{k+1} \right) 
   \left(  \phi_{i}^{k+1} + \phi_{i}^k  \right)  
   D_x \phi^k_{i+1/2}  .    \label{est-nonlinear-3}  
\end{eqnarray}
A similar expansion can be made for 
$ D_x \left( \eta \left(  \hat{\Phi}^k , \hat{\Phi}^{k+1} \right)  \right)_{i+1/2}$. 
In turn, we arrive at 
\begin{eqnarray} 
   &&
    D_x \left(  
     \eta \left(  \hat{\Phi}^k , \hat{\Phi}^{k+1} \right)  
  -  \eta \left(  \phi^k , \phi^{k+1} \right)  \right)_{i+1/2}    
  = {\cal NLE}^k_{1,1} + {\cal NLE}^k_{1,2} 
   +  {\cal NLE}^k_{1,3} + {\cal NLE}^k_{1,4}  ,   
   \nonumber 
\\
  &&
   {\cal NLE}^k_{1,1} =  \frac12 \left(  ( \phi_{i+1}^{k+1} )^2 
  + ( \phi_{i+1}^k )^2  
  +  (  \phi_{i+1}^{k+1} +  \phi_{i}^{k+1} )  
      ( \phi_{i}^{k+1} + \phi_{i}^k ) \right)  
   D_x \hat{e}^{k+1/2}_{i+1/2}   ,   \label{est-nonlinear-4-1} 
\\
  && 
   {\cal NLE}^k_{1,2} =  \frac12 \Biggl( \left( \hat{\Phi}_{i+1}^{k+1} 
   + \phi_{i+1}^{k+1} \right)   \hat{e}_{i+1}^{k+1}    
  +   \left( \hat{\Phi}_{i}^{k+1} + \phi_{i}^{k+1} \right)   
    \hat{e}_{i}^{k+1}    \nonumber 
\\
  && \qquad  \qquad 
  +    (  \hat{e}_{i+1}^{k+1} +  \hat{e}_{i}^{k+1} )  
      ( \phi_{i}^{k+1} + \phi_{i}^k ) 
   +  (  \hat{\Phi}_{i+1}^{k+1} +  \hat{\Phi}_{i}^{k+1} )  
      ( \hat{e}_{i}^{k+1} + \hat{e}_{i}^k )     \Biggr) 
   D_x \hat{\Phi}^{k+1/2}_{i+1/2}  ,    \label{est-nonlinear-4-2} 
\\
  &&
   {\cal NLE}^k_{1,3} =
    \frac14 \left(  \hat{\Phi}_{i+1}^k - \hat{\Phi}_{i+1}^{k+1} 
  + \hat{\Phi}_{i}^k - \hat{\Phi}_{i}^{k+1} \right) 
   \left(  \hat{\Phi}_{i}^{k+1} + \hat{\Phi}_{i}^k  \right)  
   D_x \hat{e}^k_{i+1/2}  ,    \label{est-nonlinear-4-3}  
\\
  &&
   {\cal NLE}^k_{1,4} =
    \frac14 \Biggl( \left(  \hat{e}_{i+1}^k - \hat{e}_{i+1}^{k+1} 
  + \hat{e}_{i}^k - \hat{e}_{i}^{k+1} \right) 
   \left(  \hat{\Phi}_{i}^{k+1} + \hat{\Phi}_{i}^k  \right)    \nonumber 
\\
  &&  \qquad  \qquad 
   +  \left(  \phi_{i+1}^k - \phi_{i+1}^{k+1} 
  + \phi_{i}^k - \phi_{i}^{k+1} \right) 
   \left(  \hat{e}_{i}^{k+1} + \hat{e}_{i}^k  \right)   \Biggr)
   D_x \phi^k_{i+1/2}  .    \label{est-nonlinear-4-4}  
\end{eqnarray}

For the term ${\cal NLE}^k_{1,1}$, we observe the following estimate for 
the nonlinear coefficient: 
\begin{eqnarray} 
  {\cal C}_1 &=&  ( \phi_{i+1}^{k+1} )^2 + ( \phi_{i+1}^k )^2  
  +  (  \phi_{i+1}^{k+1} +  \phi_{i}^{k+1} )  
      ( \phi_{i}^{k+1} + \phi_{i}^k )   \nonumber 
\\
   &=&  
   ( \phi_{i+1}^{k+1} )^2 + ( \phi_{i+1}^k )^2  
  +  \phi_{i+1}^{k+1} \phi_{i}^{k+1} +  ( \phi_{i}^{k+1} )^2  
   +   \phi_{i+1}^{k+1} \phi_{i}^k + \phi_{i}^{k+1} \phi_{i}^k   \nonumber
\\
   &\ge&  
   \frac12 \left( ( \phi_{i+1}^{k+1} )^2 + ( \phi_{i}^{k+1} )^2 \right) 
  + ( \phi_{i+1}^k )^2    
   +   \phi_{i+1}^{k+1} \phi_{i}^k + \phi_{i}^{k+1} \phi_{i}^k   \nonumber
\\
   &\ge&  
   ( \phi_{i+1}^k )^2  - ( \phi_{i}^k )^2 ,  \label{est-nonlinear-5}  
\end{eqnarray}
with a repeated application of Cauchy inequality in the last two steps. 
Meanwhile, the a-priori assumption (\ref{a priori-4}) for the numerical 
solution $\phi$ at time step $t^k$ indicates that 
\begin{eqnarray} 
  - {\cal C}_1 \le \left| \phi_{i+1}^k  + \phi_{i}^k  \right| 
  \cdot \left| \phi_{i+1}^k  - \phi_{i}^k  \right| 
 =  h  \left| \phi_{i+1}^k  + \phi_{i}^k  \right| 
  \cdot \left| D_x \phi_{i+1/2}^k  \right| 
  \le 2 h  \nrm{ \phi^k }_\infty \cdot \nrm{ \nabla_h \phi^k }_\infty 
  \le 2 C_0^2 h , 
  \label{est-nonlinear-6} 
\end{eqnarray} 
at a pointwise level. As a result, its combination with 
(\ref{est-nonlinear-4-1}) implies that 
\begin{eqnarray} 
   - 2h^2\ciptwo{ {\cal NLE}_{1,1}^k }{D_x \hat{e}^{k+1/2} } 
  \le 2 C_0^2 h  \nrm{ D_x \hat{e}^{k+1/2} }_2^2  . 
  \label{est-nonlinear-7} 
\end{eqnarray}

Similar estimates can be derived for ${\cal NLE}^k_{1,3}$. 
The regularity assumption (\ref{a priori-1}), (\ref{a priori-2}) for the 
constructed approximate solution $\hat{\Phi}$ shows that 
\begin{eqnarray} 
   \left| \hat{\Phi}_{i+1}^k - \hat{\Phi}_{i+1}^{k+1}  \right|  
   + \left| \hat{\Phi}_{i}^k - \hat{\Phi}_{i}^{k+1} \right| 
  \le 2 s \nrm{\frac{\hat{\Phi}^{k+1} - \hat{\Phi}^k }{s} }_{\infty}  
  \le  2 C_{0}  s ,   \label{est-nonlinear-8-1} 
\\
   \left| \hat{\Phi}_{i}^{k+1} + \hat{\Phi}_{i}^k  \right| 
  \le  \nrm{ \hat{\Phi}^{k+1} }_\infty + \nrm{ \hat{\Phi}^k }_{\infty}    
  \le 2 C_0 ,  \quad \mbox{so that}    \label{est-nonlinear-8-2}  
\\
  \left| {\cal C}_3 \right| 
  =  \left| \left(  \hat{\Phi}_{i+1}^k - \hat{\Phi}_{i+1}^{k+1} 
  + \hat{\Phi}_{i}^k - \hat{\Phi}_{i}^{k+1} \right) 
   \left(  \hat{\Phi}_{i}^{k+1} + \hat{\Phi}_{i}^k  \right)  \right|  
  \le 4 C_0^2 s  , \label{est-nonlinear-8-3} 
\end{eqnarray}
at a point-wise level. In turn, we arrive at 
\begin{eqnarray} 
   - 2h^2\ciptwo{ {\cal NLE}_{1,3}^k }{D_x \hat{e}^{k+1/2} } 
  &\le& 2 C_0^2 s  \nrm{ D_x \hat{e}^{k+1/2} }_2 
  \cdot \nrm{ D_x \hat{e}^k }_2    \nonumber 
\\
  &\le&
      \frac18 \alpha  \nrm{ D_x \hat{e}^{k+1/2} }_2^2  
  +  \frac{8 C_0^4}{\alpha} s^2  \nrm{ D_x \hat{e}^k }_2^2 . 
  \label{est-nonlinear-9} 
\end{eqnarray}

For the second nonlinear term ${\cal NLE}^k_{1,2}$, we start from a 
rewritten form: 
\begin{eqnarray} 
   {\cal NLE}^k_{1,2}  &=&  
  \frac12  \Biggl( \left( \hat{\Phi}_{i+1}^{k+1} 
   + \phi_{i+1}^{k+1} + \phi_{i}^{k+1} + \phi_{i}^k \right)   
    \hat{e}_{i+1}^{k+1}    
  +   \left( \hat{\Phi}_{i}^{k+1} + 2 \phi_{i}^{k+1} 
    + \phi_{i}^k \right)   
    \hat{e}_{i}^{k+1}    \nonumber 
\\
  &&  
   +  2 (  \hat{\Phi}_{i+1}^{k+1} +  \hat{\Phi}_{i}^{k+1} )  
        \hat{e}_{i}^{k+1/2}     \Biggr)  
    D_x \hat{\Phi}^{k+1/2}_{i+1/2}  ,    \nonumber 
\\
  &=&  
     \Biggl( {\cal C}_{2,1}  \hat{e}_{i+1}^{k+1/2}    
  +   {\cal C}_{2,2}  \hat{e}_{i}^{k+1/2}     
  + \frac12 {\cal C}_{2,3}  \hat{e}_{i+1}^k  
  + \frac12 {\cal C}_{2,4}  \hat{e}_{i}^k \Biggr)  
   D_x \hat{\Phi}^{k+1/2}_{i+1/2}  ,   \label{est-nonlinear-10}   
\\
  \mbox{with} && 
   {\cal C}_{2,1} = - {\cal C}_{2,3} = \hat{\Phi}_{i+1}^{k+1} 
   + \phi_{i+1}^{k+1} + \phi_{i}^{k+1} + \phi_{i}^k ,  \, \, 
  {\cal C}_{2,4} = -  \left( \hat{\Phi}_{i}^{k+1} + 2 \phi_{i}^{k+1} 
    + \phi_{i}^k \right) ,  \nonumber 
\\
   && 
  {\cal C}_{2,2} = 2 \hat{\Phi}_{i}^{k+1} + \hat{\Phi}_{i+1}^{k+1} 
   + 2 \phi_{i}^{k+1} + \phi_{i}^k .  \nonumber 
\end{eqnarray} 
For these nonlinear coefficients, it is clear that 
\begin{eqnarray} 
  \nrm{ {\cal C}_{2,1} }_4 + \nrm{ {\cal C}_{2,2} }_4  
  \le C  \left(  \nrm{\hat{\Phi}^k}_4   
  +  \nrm{\hat{\Phi}^{k+1} }_4 + \nrm{\phi^k}_4 
  +  \nrm{\phi^{k+1} }_4  \right)  \le C C_{0}  ,   
   \label{est-nonlinear-11-1} 
\\
   \nrm{ {\cal C}_{2,3} }_\infty + \nrm{ {\cal C}_{2,4} }_\infty  
  \le C  \left(  \nrm{\hat{\Phi}^k}_\infty   
  +  \nrm{\hat{\Phi}^{k+1} }_\infty + \nrm{\phi^k}_\infty 
  +  \nrm{\phi^{k+1} }_\infty  \right)  
   \le C \left( C_{0}   +  \nrm{\phi^{k+1} }_\infty  \right) ,   
   \label{est-nonlinear-11-2} 
\end{eqnarray}
in which the regularity condition (\ref{a priori-1}) and 
a-priori assumption (\ref{a priori-3})-(\ref{a priori-4}) were repeated 
used in the derivation. In particular, we note that the $\nrm{ \cdot }_4$ bound 
is available for both the approximate solution $\hat{\Phi}$ and the numerical solution $\phi$, at both time steps $t^k$ and $t^{k+1}$, and the same for 
the $\nrm{ \cdot }_\infty$ bound for $\hat{\Phi}$. Meanwhile, in 
$\nrm{ \cdot }_\infty$ norm for the numerical solution $\phi$, we only have 
its bound at time step $t^k$, as an a-priori assumption, and its bound at the 
next time step $t^{k+1}$ has to be obtained by a higher order convergence 
in $\ell^2$ norm via an inverse inequality, as will be shown later. 
As a result, an application of discrete H\"older 
inequality shows that 
\begin{eqnarray} 
  &&
   - 2h^2\ciptwo{ {\cal NLE}_{1,2}^k }{D_x \hat{e}^{k+1/2} }  \nonumber 
\\
  &\le& 
   \Bigl(  2  \left(  \nrm{ {\cal C}_{2,1} }_4 
   + \nrm{ {\cal C}_{2,2} }_4  \right)  \nrm{ \hat{e}^{k+1/2} }_4 
  +   \left( \nrm{ {\cal C}_{2,3} }_\infty 
   + \nrm{ {\cal C}_{2,4} }_\infty  \right)  \nrm{ \hat{e}^k }_2  \Bigr) 
   \nrm{ D_x \hat{e}^{k+1/2} }_2    \nonumber 
\\
  &\le&
   \Bigl(  C C_0  \nrm{ \hat{e}^{k+1/2} }_4 
  +    C \left( C_{0}   +  \nrm{\phi^{k+1} }_\infty  \right)  
   \nrm{ \hat{e}^k }_2  \Bigr)  \nrm{ D_x \hat{e}^{k+1/2} }_2   . 
  \label{est-nonlinear-12} 
\end{eqnarray}
Furthermore, a discrete Sobolev embedding in 2-D gives 
\begin{eqnarray} 
   \nrm{ \hat{e}^{k+1/2} }_4  
  \le  C  \nrm{ \hat{e}^{k+1/2} }_2^{\frac12}   \cdot 
  \nrm{ \nabla_h \hat{e}^{k+1/2} }_2^{\frac12}  ,   \quad 
  \mbox{since}   
  \, \, \ciptwo{ \hat{e}^{k+1/2} }{{\bf 1}} = 0  , 
   \label{est-nonlinear-13} 
\end{eqnarray}
with $C$ independent on $h$; its proof can be found in \cite{guan12a}.  
We note that the zero-mean property of $\hat{e}^{k+1/2}$ comes from 
(\ref{consistency-12}). 
Therefore, the first part in (\ref{est-nonlinear-12}) can be bounded by 
\begin{eqnarray} 
    C C_0  \nrm{ \hat{e}^{k+1/2} }_4  \cdot \nrm{ D_x \hat{e}^{k+1/2} }_2  
   \le M \nrm{ \hat{e}^{k+1/2} }_2^{\frac12}   \cdot 
  \nrm{ \nabla_h \hat{e}^{k+1/2} }_2^{\frac32}  ,  \quad 
   \mbox{with}  \, \, \, M = C C_ 0 . 
\end{eqnarray}
In addition, we use the Young inequality
	\begin{equation} 
a \cdot b \le \frac{a^p}{p} + \frac{b^q}{q} ,  \quad  \forall \ a, b > 0 ,  \quad  \frac{1}{p} + \frac{1}{q} = 1 ,  
  \label{Young-1}
	\end{equation}
with the choices $p=4$, $q=\frac43$, $a = \left( 6 \alpha^{-1} \right)^{\frac34}  M \nrm{ \hat{e}^{k+1/2}  }_2^{\frac12}$,  $b = \left( \frac16 \alpha \right)^{\frac34}  \nrm{ \nabla_h \hat{e}^{k+1/2}  }_2^{\frac32}$, and get  
	\begin{eqnarray} 
  &&
   C C_0  \nrm{ \hat{e}^{k+1/2} }_4  \cdot \nrm{ D_x \hat{e}^{k+1/2} }_2 
  \le  M \nrm{ \hat{e}^{k+1/2} }_2^{\frac12}   \cdot 
  \nrm{ \nabla_h \hat{e}^{k+1/2} }_2^{\frac32}   \nonumber 
\\
  &=& a \cdot b \le  \frac14 a^4 + \frac34 b^{\frac43}
    = \frac14  M^4 \cdot \frac{\left( 6 \right)^3}{\alpha^3}  
  \nrm{ \hat{e}^{k+1/2} }_2^2 
  + \frac{\alpha}{8} \nrm{ \nabla_h \hat{e}^{k+1/2} }_2^2 . 
	\label{est-nonlinear-14} 
	\end{eqnarray}
The bound for the second part in (\ref{est-nonlinear-12}) can be obtained 
by Cauchy inequality: 
\begin{eqnarray} 
    C \left( C_{0}   +  \nrm{\phi^{k+1} }_\infty  \right)  
   \nrm{ \hat{e}^k }_2  \nrm{ D_x \hat{e}^{k+1/2} }_2 
    \le  \frac{C \left( C_0^2 
  + \nrm{\phi^{k+1} }_\infty^2 \right) }{\alpha}     
    \nrm{ \hat{e}^k }_2^2  
   +  \frac18 \alpha \nrm{ D_x \hat{e}^{k+1/2} }_2^2 . 
    \label{est-nonlinear-15}  
\end{eqnarray}
Consequently, a combination of (\ref{est-nonlinear-12}), 
(\ref{est-nonlinear-14}), (\ref{est-nonlinear-15}) yields 
\begin{eqnarray} 
   - 2h^2\ciptwo{ {\cal NLE}_{1,2}^k }{D_x \hat{e}^{k+1/2} }  
    &\le&  \frac{C_8}{\alpha^3}  
   \left(  \nrm{ \hat{e}^k }_2^2 + \nrm{ \hat{e}^{k+1} }_2^2  \right) 
  + \frac{C \nrm{\phi^{k+1} }_\infty^2 }{\alpha}  \nrm{ \hat{e}^k }_2^2   
   \nonumber 
\\
  && 
  + \frac{\alpha}{8} \left( \nrm{ \nabla_h \hat{e}^{k+1/2} }_2^2
  +  \nrm{ D_x \hat{e}^{k+1/2} }_2^2  \right)  . 
  \label{est-nonlinear-16} 
\end{eqnarray}
with $C_8 = 27 M^4 + C C_0^2$. 
 
The analysis for the fourth nonlinear term ${\cal NLE}^k_{1,4}$ is similar to 
that of ${\cal NLE}^k_{1,2}$. Its rewritten form reads: 
\begin{eqnarray} 
   {\cal NLE}^k_{1,4}  &=& 
   \frac14 \Biggl( \left(  \hat{e}_{i+1}^k - \hat{e}_{i+1}^{k+1} 
  + \hat{e}_{i}^k - \hat{e}_{i}^{k+1} \right) 
   \left(  \hat{\Phi}_{i}^{k+1} + \hat{\Phi}_{i}^k  \right)     \nonumber 
\\
  &&   
   +  2 \left(  \phi_{i+1}^k - \phi_{i+1}^{k+1} 
  + \phi_{i}^k - \phi_{i}^{k+1} \right)    \hat{e}_{i}^{k+1/2}   \Biggr)
   D_x \phi^k_{i+1/2}  ,    \nonumber 
\\
  &=&  
     \Biggl( {\cal C}_{4,1}  \hat{e}_{i+1}^{k+1/2}    
  +   {\cal C}_{4,2}  \hat{e}_{i}^{k+1/2}     
  + {\cal C}_{4,3}  \hat{e}_{i+1}^k  
  + {\cal C}_{4,4}  \hat{e}_{i}^k \Biggr)  
   D_x \hat{\Phi}^{k+1/2}_{i+1/2}  ,   \label{est-nonlinear-17}   
\\
  \mbox{with} && 
   {\cal C}_{4,1} = - \frac12 
   \left(  \hat{\Phi}_{i}^{k+1} + \hat{\Phi}_{i}^k  \right)  ,   \quad 
   {\cal C}_{4,2} = \frac12  \left(  \phi_{i+1}^k - \phi_{i+1}^{k+1} 
  + \phi_{i}^k - \phi_{i}^{k+1}  
  -  \hat{\Phi}_{i}^{k+1} - \hat{\Phi}_{i}^k  \right) , \nonumber 
\\
  &&
   {\cal C}_{4,3} = {\cal C}_{4,4} = \frac12  
   \left(  \hat{\Phi}_{i}^{k+1} + \hat{\Phi}_{i}^k  \right) . \nonumber 
\end{eqnarray} 
Similarly, these nonlinear coefficients can be bounded by 
\begin{eqnarray} 
    \nrm{ {\cal C}_{4,1} }_4 + \nrm{ {\cal C}_{4,2} }_4  
  \le C  \left(  \nrm{\hat{\Phi}^k}_4   
  +  \nrm{\hat{\Phi}^{k+1} }_4 + \nrm{\phi^k}_4 
  +  \nrm{\phi^{k+1} }_4  \right)  \le C C_{0}  ,   
   \label{est-nonlinear-18-1} 
\\
   \nrm{ {\cal C}_{4,3} }_\infty + \nrm{ {\cal C}_{4,4} }_\infty  
  \le C  \left(  \nrm{\hat{\Phi}^k}_\infty   
  +  \nrm{\hat{\Phi}^{k+1} }_\infty \right)  
   \le C C_{0} .      
   \label{est-nonlinear-18-2} 
\end{eqnarray}
Note that for ${\cal C}_{4,3}$ and ${\cal C}_{4,4}$, since the numerical solution $\phi$ is not involved, the regularity assumption 
(\ref{a priori-1}),  (\ref{a priori-2}) for the approximate solution $\hat{\Phi}$ 
directly gives a bounded for these two coefficients. This also greatly simplifies 
the analysis below. Then we have 
\begin{eqnarray} 
  &&
   - 2h^2\ciptwo{ {\cal NLE}_{1,4}^k }{D_x \hat{e}^{k+1/2} }  \nonumber 
\\
  &\le& 
   \Bigl(  2  \left(  \nrm{ {\cal C}_{4,1} }_4 
   + \nrm{ {\cal C}_{4,2} }_4  \right)  \nrm{ \hat{e}^{k+1/2} }_4 
  +   \left( \nrm{ {\cal C}_{4,3} }_\infty 
   + \nrm{ {\cal C}_{4,4} }_\infty  \right)  \nrm{ \hat{e}^k }_2  \Bigr) 
   \nrm{ D_x \hat{e}^{k+1/2} }_2    \nonumber 
\\
  &\le&
   C C_0  \left( \nrm{ \hat{e}^{k+1/2} }_4  
  +  \nrm{ \hat{e}^k }_2  \right)  \cdot \nrm{ D_x \hat{e}^{k+1/2} }_2   . 
  \label{est-nonlinear-19} 
\end{eqnarray}
In turn, the estimates (\ref{est-nonlinear-13})-(\ref{est-nonlinear-15}) 
are also valid; consequently, the following estimate can be derived: 
\begin{eqnarray} 
   - 2h^2\ciptwo{ {\cal NLE}_{1,4}^k }{D_x \hat{e}^{k+1/2} }  
    &\le&  \frac{C_9}{\alpha^3}  
   \left(  \nrm{ \hat{e}^k }_2^2 + \nrm{ \hat{e}^{k+1} }_2^2  \right)    
   \nonumber 
\\
  && 
  + \frac{\alpha}{8} \left( \nrm{ \nabla_h \hat{e}^{k+1/2} }_2^2
  +  \nrm{ D_x \hat{e}^{k+1/2} }_2^2  \right)  . 
  \label{est-nonlinear-20} 
\end{eqnarray}

Finally, a combination of (\ref{est-nonlinear-7}), (\ref{est-nonlinear-9}), 
(\ref{est-nonlinear-16}) and (\ref{est-nonlinear-20}) reveals that 
\begin{eqnarray} 
   {\cal NLI}_1^k 
  &=& - 2h^2\ciptwo{  D_x \left( {\cal NLE}^k \right) }
  {D_x \hat{e}^{k+1/2} }    \nonumber 
\\
  &\le&
    \frac{C_{10}}{\alpha^3}  
   \left(  \nrm{ \hat{e}^k }_2^2 + \nrm{ \hat{e}^{k+1} }_2^2  \right)   
   + \frac{C \nrm{\phi^{k+1} }_\infty^2 }{\alpha}  \nrm{ \hat{e}^k }_2^2   
   \nonumber 
\\
  && 
  + \frac{\alpha}{4}  \nrm{ \nabla_h \hat{e}^{k+1/2} }_2^2 
  +  \frac{\alpha}{2} \nrm{ D_x \hat{e}^{k+1/2} }_2^2  
  +  \frac{8 C_0^4}{\alpha} s^2  \nrm{ D_x \hat{e}^k }_2^2  , 
  \label{est-nonlinear-21} 
\end{eqnarray}
by choosing $h$ with $2 C_0^2 h \le \frac{\alpha}{8}$. 
The analysis for ${\cal NLI}_2^k$ is essentially the same: 
\begin{eqnarray} 
   {\cal NLI}_2^k 
  &=& - 2h^2\ciptwo{  D_y \left( {\cal NLE}^k \right) }
  {D_y \hat{e}^{k+1/2} }    \nonumber 
\\
  &\le&
    \frac{C_{10}}{\alpha^3}  
   \left(  \nrm{ \hat{e}^k }_2^2 + \nrm{ \hat{e}^{k+1} }_2^2  \right)   
   + \frac{C \nrm{\phi^{k+1} }_\infty^2 }{\alpha}  \nrm{ \hat{e}^k }_2^2   
   \nonumber 
\\
  && 
  + \frac{\alpha}{4}  \nrm{ \nabla_h \hat{e}^{k+1/2} }_2^2 
  +  \frac{\alpha}{2} \nrm{ D_y \hat{e}^{k+1/2} }_2^2  
  +  \frac{8 C_0^4}{\alpha} s^2  \nrm{ D_y \hat{e}^k }_2^2  ; 
  \label{est-nonlinear-22} 
\end{eqnarray}
and the details are skipped for brevity of presentation. Therefore, 
a combination of (\ref{est-nonlinear-21}) and (\ref{est-nonlinear-22}) 
results in (\ref{est-nonlinear-1}). The proof of Proposition~\ref{estimate-nonlinear} 
is complete. 
\end{proof}

\begin{rem}
In fact, for the nonlinear error term, the form of expansion and decomposition 
in its discrete gradient is not unique. However, the way in our decomposition 
(\ref{est-nonlinear-4-1})-(\ref{est-nonlinear-4-4}) greatly facilitates the 
convergence analysis. 

It is well known that the exact solution $\Phi$ and the nonlinear potential $\Phi^3$ 
have a non-positive $H^{-1}$ inner product, since $3 \Phi^2 \ge 0$. 
However, for the second order numerical approximation 
$\eta \left( \phi^k , \phi^{k+1} \right)$, introduced by (\ref{nlch-scheme-3}), 
its error estimate becomes much more tricky. In the decomposition 
(\ref{est-nonlinear-4-1}), the nonlinear coefficient ${\cal C}_1$ is proven 
to be ``almost" non-negative, as in (\ref{est-nonlinear-5}), and the remainder 
term has an $O (h)$ bound given by (\ref{est-nonlinear-6}), using 
the $W^{1,\infty}$ bound assumption for the numerical solution at $t^k$, 
as given by (\ref{a priori-4}).  This treatment 
assures a controlled property of the nonlinear inner product associated with 
(\ref{est-nonlinear-4-1}). 

Moreover, since the numerical solution $\phi$ is involved with the nonlinear 
coefficient ${\cal C}_1$ in (\ref{est-nonlinear-4-1}), we could take the 
discrete gradient of the approximate solution $\Phi$ in the nonlinear 
expansion (\ref{est-nonlinear-4-2}), and its $\nrm{ \cdot }_\infty$ norm 
is directly bounded by (\ref{a priori-1}). If it is replaced by the discrete 
gradient of the numerical solution, a numerical analysis is not feasible, 
since a bound for $\nrm{ \phi^{k+1} }_\infty$ is not avialable at time 
step $t^{k+1}$. 

Meanwhile, in the nonlinear expansion (\ref{est-nonlinear-4-4}), 
an appearance of the discrete gradient of the numerical solution 
at time step $t^k$ does not cause any theoretical trouble, since 
we have had an a-priori bound (\ref{a priori-4}), which is to be 
recovered by an $O (s^3 + h^4)$ convergence analysis in $\ell^2$ norm. 

For the nonlinear errors appearing in (\ref{est-nonlinear-4-2}), 
(\ref{est-nonlinear-4-4}), we have to rewrite them in terms of 
a nonlinear combination of $\hat{e}^{k+1/2}$ and $\hat{e}^k$. 
The reason is that we only have a well-posed diffusion term of 
$\nrm{ \nabla_h \hat{e}^{k+1/2} }_2^2$; a positive diffusion term 
in either the form of $\nrm{ \nabla_h \hat{e}^{k} }_2^2$ or 
$\nrm{ \nabla_h \hat{e}^{k+1} }_2^2$ is not available in the 
numerical analysis, because of the second order 
numerical approximation. With such a rewriting, the terms involving 
$\hat{e}^{k+1/2}$ only require an $\ell^4$ bound for the numerical 
and approximate solutions, given by (\ref{est-nonlinear-11-1}), 
and the $\ell^4$ estimate for $\hat{e}^{k+1/2}$ is obtained by 
(\ref{est-nonlinear-13}), a discrete Sobolev embedding. In turn, 
these terms can be controlled with the help of Young inequality, 
as in (\ref{est-nonlinear-14}). 

  The terms involving $\hat{e}^k$ can be 
handled by a standard Cauchy inequality, and a coefficient 
$\nrm{ \phi^{k+1} }_\infty^2$ has to be included in the estimate 
(\ref{est-nonlinear-15}). Such a bound is not available at present; it has 
to be obtained from a preliminary estimate before a discrete Gronwall 
inequality is applied; see the analysis in later subsections.  

For the nonlinear expansion in (\ref{est-nonlinear-4-3}), 
we make the nonlinear coefficient of order $O (s)$, as analyzed 
by (\ref{est-nonlinear-8-1})-(\ref{est-nonlinear-8-3}). In addition, such 
a nonlinear coefficient has to be $\hat{\Phi}$ dependent, instead of 
$\phi$ dependent, since we have not had the divided difference bound 
(in time) for the numerical solution. With such an $O (s)$ analysis, 
the nonlinear inner product associated with (\ref{est-nonlinear-4-3}) 
is bounded by (\ref{est-nonlinear-9}), in which the first part can be 
controlled by the diffusion term, and the second part is an $O (s^2)$ 
increment. The stability of such an $O (s^2)$ incremental term is ensured 
by the term $\nrm{ \nabla_h \hat{e}^{k+1} }_2^2   
  - \nrm{ \nabla_h \hat{e}^{k} }_2^2$, which appears in (\ref{est-linear-2}) 
in Proposition~\ref{estimate-linear}, the estimate of the concave diffusion term. 
\end{rem}

	\begin{rem}
	\label{rem-3D-1}
For the 3D case, a discrete Sobolev embedding gives 
\begin{eqnarray} 
   \nrm{ \hat{e}^{k+1/2} }_4  
  \le  C  \nrm{ \hat{e}^{k+1/2} }_2^{\frac14}   \cdot 
  \nrm{ \nabla_h \hat{e}^{k+1/2} }_2^{\frac34}  ,   \quad 
  \mbox{if}   
  \, \, \ciptwo{ \hat{e}^{k+1/2} }{{\bf 1}} = 0  , 
   \label{embedding-l4-3D} 
\end{eqnarray}
which is analogous to (\ref{est-nonlinear-13}) in 2-D; also see the related 
discussions in \cite{guan12a}. In turn, we are able to derive the following result
	\begin{eqnarray}
  &&
2h^3 \cipthree{  \eta \left(  \hat{\Phi}^k , \hat{\Phi}^{k+1} \right) 
  - \eta \left(  \phi^k , \phi^{k+1} \right)  }{\Delta_h 
  \hat{e}^{k+1/2} }   \nonumber 
\\
  &\le&
      \frac{C_1}{\alpha^7} \left(  \nrm{ \hat{e}^k }_2^2  
   + \nrm{ \hat{e}^{k+1} }_2^2  \right)
   + \alpha \nrm{\nabla_h  \hat{e}^{k+1/2}  }_2^2   \nonumber 
\\
  &&  
   + \frac{C_1}{\alpha} s^2 \nrm{\nabla_h  \hat{e}^k  }_2^2
   + \frac{C_1}{\alpha} \left( 1 + \nrm{ \phi^{k+1} }_\infty^2 \right)  
    \nrm{ \hat{e}^k }_2^2 ,  \quad  \forall \alpha > 0 ,   
  \label{est-nonlinear-3D}
	\end{eqnarray}
the only changes being the $\alpha^7$ replaces $\alpha^3$ and we use the triple summation $\cipthree{\, \cdot\, }{\, \cdot\, }$. As a result, a full order convergence in 3D  can be derived in the same manner. The details are omitted in this paper for the sake of brevity. 
	\end{rem}

\subsection{Proof of Theorem~\ref{thm:L2-converge}: 
$\ell^\infty (0,T; \ell^2)$ convergence}  

We begin with an $O (s^3 + h^4)$ convergence assumption of the numerical solution, in $\ell^2$ norm, up to time step $t^k$: 
\begin{eqnarray} 
  \nrm{ \hat{e}^j }_2  \le  C_{11}  e^{C_{12} t^j} \left( s^3 + h^4 \right) ,  
  \quad  \forall 0 \le j \le k  ,  \label{a priori-5}
\end{eqnarray} 
with $C_{11}$, $C_{12}$ independent on $s$ and $h$. Consequently, 
an application of inverse inequality shows that 
\begin{eqnarray} 
   \nrm{ \hat{e}^j }_\infty + \nrm{ \nabla_h \hat{e}^j }_\infty 
   \le  \frac{ C \nrm{ \hat{e}^j }_2 }{ h^{\frac{d}{2} +1} }
  \le  \frac{ C C_{11}  e^{C_{12} t^j} \left( s^3 + h^4 \right) }{ h^{\frac{d}{2} +1} } 
  \le  C h^{1/2}  \le 1 ,  \quad  \forall 0 \le j \le k  , \label{a priori-6} 
\end{eqnarray} 
with the dimension $d=2$ or 3. It is also noted that the linear refinement 
constraint, $s \le Ch$, is used in the above derivation. 
In turn, the a-priori assumption (\ref{a priori-4}) for the numerical solution 
at $t^k$ is valid by setting 
\begin{eqnarray} 
    C_0 = \max_{0 \le j \le k} 
   \left( \nrm{\hat{\Phi}^j}_{\infty} 
   + \nrm{\nabla_h \hat{\Phi}^j }_{\infty}  \right) + 1 .  
  \label{a priori-7} 
\end{eqnarray} 
Moreover, it is clear that an estimate for $\nrm{ \phi^{k+1} }_\infty$ 
is needed in the application of Proposition~\ref{estimate-nonlinear} in the 
nonlinear analysis. For this quantity, we observe that (\ref{a priori-3}), 
which comes from a global in time $\ell^4$ bound for the numerical solution 
(as derived in Theorem~\ref{lp-stable-thm}), implies that 
\begin{eqnarray} 
   \nrm{ \phi^{k+1} }_\infty  
   \le  \frac{ C \nrm{ \phi^{k+1} }_4 }{ h^{\frac{d}{4} } }
  \le  C C_0  h^{-\frac{d}{4} }   ,  \quad 
  \mbox{with $d$ the dimension}  , \label{a priori-8} 
\end{eqnarray} 
in which the first step comes from a similar inverse inequality. 

Now we derive the $\ell^2$ convergence at time step $t^{k+1}$. 
Multiplying by $2h^2 \hat{e}^{k+1/2} = h^2 ( \hat{e}^{k+1} + \hat{e}^k)$, summing over $i$ and $j$, and applying Green's second identity \eqref{green2ndthm-2d}, we have
	\begin{eqnarray}
   &&  
\nrm{\hat{e}^{k+1}}_2^2 - \nrm{ \hat{e}^k}_2^2  
  + 2 B_c s \nrm{ \nabla_h \hat{e}^{k+1/2} }_2^2  \nonumber 
\\
  &=& 
   2h^2s \ciptwo{  \eta \left(  \hat{\Phi}^k , \hat{\Phi}^{k+1} \right) 
  - \eta \left(  \phi^k , \phi^{k+1} \right)  }{\Delta_h 
  \hat{e}^{k+1/2} }   
    + 2sh^2 \ciptwo{\hat{\tau}^{k}}{ \hat{e}^{k+1/2}}  \nonumber 
	\\
&&- 2 B_e s h^2  
  \ciptwo{ \hat{\hat{e}}^{k+1/2}}{\Dh \hat{e}^{k+1/2}} 
  - 2 h^2s\ciptwo{\convone{J}{ \hat{\hat{e}}^{k+1/2}}}
  {\Dh \hat{e}^{k+1/2}}  .   \label{converge-3}
	\end{eqnarray}
Applying Propositions~\ref{estimate-linear}, ~\ref{estimate-nonlinear} 
for linear and nonlinear errors, and using the Cauchy inequality to 
bound the truncation error term: 
       \begin{eqnarray}
2h^2s\ciptwo{ \hat{\tau}^k }{\hat{e}^{k+1/2}} 
  \le s C_{13} 
  \left( s^3 + h^4 \right)^2 + s  \nrm{\hat{e}^{k+1/2}}_2^2 
  \le s C_{13}  \left( s^3 + h^4 \right)^2 
   + \frac{s}{2}   \left(  \nrm{\hat{e}^{k+1}}_2^2  
  +  \nrm{\hat{e}^k}_2^2  \right)  ,   
	\end{eqnarray}
we arrive at 
	\begin{eqnarray}
   &&  
\nrm{\hat{e}^{k+1}}_2^2 - \nrm{ \hat{e}^k}_2^2  
  + s  \left( 2 B_c - 5 B_e - 2 \alpha \right) 
   \nrm{ \nabla_h \hat{e}^{k+1/2} }_2^2   
  + B_e s \left( \nrm{ \nabla_h \hat{e}^{k+1} }_2^2   
  - \nrm{ \nabla_h \hat{e}^{k} }_2^2  \right)  \nonumber 
\\
  &\le& 
   \frac{C_1}{\alpha^3} s \left(  \nrm{ \hat{e}^k }_2^2  
   + \nrm{ \hat{e}^{k+1} }_2^2  \right)  
   + \frac{C_{14}}{\alpha} s \left(  \nrm{ \hat{e}^k }_2^2  
   + \nrm{ \hat{e}^{k-1} }_2^2  \right)    
   + B_e s \nrm{\nabla_h  \hat{e}^{k-1/2}  }_2^2  \nonumber 
\\
  && 
   + \frac{C_1}{\alpha} s^3 \nrm{\nabla_h  \hat{e}^k  }_2^2
   + \frac{C_1}{\alpha} s \nrm{ \phi^{k+1} }_\infty^2   
    \nrm{ \hat{e}^k }_2^2 
    +  s C_{13}  \left( s^3 + h^4 \right)^2 ,  \quad  \forall \alpha > 0 .   \label{converge-4}  
	\end{eqnarray}

\subsubsection{A preliminary estimate for $\nrm{ \phi^{k+1} }_\infty$ } 

Note that an $O (1)$ bound for $\nrm{ \phi^{k+1} }_\infty$ is not available 
at this point, due to the lack of information of the numerical solution at 
time step $t^{k+1}$. We only have (\ref{a priori-8}), which comes from an unconditional $\ell^4$ stability of the numerical solution, and this bound may become singular as $h \to 0$. Meanwhile, such a bound is needed to apply the 
Gronwall inequality. 

  To overcome this difficulty, we derive an estimate, based on 
(\ref{converge-4}), the assumption (\ref{a priori-5}) (up to time 
step $t^k$), and the preliminary bound (\ref{a priori-8}). 
The assumption (\ref{a priori-5}) implies that 
\begin{eqnarray} 
    \nrm{ \hat{e}^k }_2^2  \le C h^6 ,    \quad 
    \nrm{\nabla_h  \hat{e}^k  }_2^2 ,  
    \nrm{\nabla_h  \hat{e}^{k-1/2}  }_2^2  \le C h^4 ,  \quad 
    \nrm{ \phi^{k+1} }_\infty^2  \le  C  h^{-\frac{d}{2} }  , 
\end{eqnarray} 
with the standard constraint $s \le C h$. Furthermore, using the fact that 
$\gamma_0 = B_c - 3 B_e > 0$ and taking $\alpha = \frac{\gamma_0}{2}$, 
we conclude from (\ref{converge-4}) that 
     \begin{eqnarray}
     \nrm{\hat{e}^{k+1}}_2^2   
    \le C \left( h^5  + h^{7 - \frac{d}{2} } \right)  
   \le C h^5 ,   \quad 
  \mbox{since $7 - \frac{d}{2} > 5$ for $d=2,3$ } .  \label{a priori-9}  
	\end{eqnarray}
In turn, an application of inverse inequality shows that 
\begin{eqnarray} 
   \nrm{ \hat{e}^{k+1} }_\infty  
   \le  \frac{ C \nrm{ \hat{e}^{k+1} }_2 }{ h^{\frac{d}{2} } }
  \le  C  h^{\frac52-\frac{d}{2} }  \le C h \le 1 ,  \quad 
  \mbox{with $d=2$ or 3}  . \label{a priori-10} 
\end{eqnarray} 
Consequently, the triangular inequality yields 
\begin{eqnarray} 
   \nrm{ \phi^{k+1} }_\infty 
   \le \nrm{ \hat{\Phi}^{k+1} }_\infty +  \nrm{ \hat{e}^{k+1} }_\infty  
   \le C_{15} := C_0 + 1 .  \label{a priori-11} 
\end{eqnarray}

\begin{rem} 
Of course, the rough estimate (\ref{a priori-9}) is not the convergence result 
that we want. Not only its accuracy is not satisfactory, $O (h^{\frac52})$ 
instead of $O (s^3 + h^4)$, but also its stability is not maintained: 
$O (s^3 + h^4)$ convergence at the previous time step to an order 
$O (h^\frac52)$ at the next time step. The reason for such an 
accuracy deterioration is due to the singular bound (\ref{a priori-8}) 
for $\nrm{ \phi^{k+1} }_\infty$, which comes from the global in time 
$\ell^4$ bound for the numerical solution. The purpose of the rough 
estimate (\ref{a priori-9}) is to derive a preliminary ``convergence" 
result in the $\ell^2$ norm, based on the full convergence result at the 
previous time step, combined with the singular bound (\ref{a priori-8}), 
so that a regular $ O(1)$ bound can be obtained for the 
$\nrm{ \cdot }_\infty$ norm of the numerical solution at the next time 
step with an application of inverse inequality. Subsequently, the 
full order $\ell^2$ convergence at the next time step can be derived 
by using the discrete Gronwall inequality, since an $O (1)$ bound 
for $\nrm{ \phi^{k+1} }_\infty$ has been available. 
\end{rem}

\subsubsection{ $\ell^\infty (0,T; \ell^2 )$ convergence and 
a recovery of the assumption (\ref{a priori-5}) }

A substitution of (\ref{a priori-11}) into (\ref{converge-4}) gives
	\begin{eqnarray}
   &&  
\nrm{\hat{e}^{k+1}}_2^2 - \nrm{ \hat{e}^k}_2^2  
  + s  \left( 2 B_c - 5 B_e - 2 \alpha \right) 
   \nrm{ \nabla_h \hat{e}^{k+1/2} }_2^2   
  + B_e s \left( \nrm{ \nabla_h \hat{e}^{k+1} }_2^2   
  - \nrm{ \nabla_h \hat{e}^{k} }_2^2  \right)  \nonumber 
\\
  &\le& 
   \frac{C_{16}}{\alpha^3} s \left(  \nrm{ \hat{e}^k }_2^2  
   + \nrm{ \hat{e}^{k+1} }_2^2  \right)  
   + \frac{C_{14}}{\alpha} s \nrm{ \hat{e}^{k-1} }_2^2      
   + B_e s \nrm{\nabla_h  \hat{e}^{k-1/2}  }_2^2   \nonumber 
\\
  &&
   + \frac{C_1}{\alpha} s^3 \nrm{\nabla_h  \hat{e}^k  }_2^2 
   +  s C_{13}  \left( s^3 + h^4 \right)^2 ,  \quad  \forall \alpha > 0 .   
    \label{converge-5}  
	\end{eqnarray}
Replacing the index $k$ by $l$, summing on $l$, from $l=0$ to $l = k$, and using $\hat{e}^{0}\equiv0$ (by (\ref{consistency-12})), 
we have
	\begin{eqnarray}
&&
   \nrm{\hat{e}^{k+1} }_{2}^2  + B_e  s \nrm{ \nabla_h \hat{e}^{k+1} }_2^2 
  + s \left( 2 \gamma_0 - 2 \alpha \right) 
   \sum_{l=1}^{k} \nrm{\nabla_h \hat{e}^{l}}_{2}^2 
	\nonumber
	\\
&\le&   s \left( \frac{C_{16}}{\alpha^3} + \frac{C_{14} }{\alpha} \right) 
   \sum_{l=1}^{k}   \nrm{ \hat{e}^l }_2^2
   + s \frac{C_{16} }{\alpha^3} \sum_{l=0}^{k}  \nrm{ \hat{e}^{l+1} }_2^2 
   + \frac{C_1}{\alpha} s^3 \sum_{l=0}^{k} \nrm{\nabla_h  \hat{e}^l  }_2^2
  + s C_{13} \sum_{l=0}^{k}(h^4 + s^3)^2 ,    \label{converge-6}  
	\end{eqnarray}
with $B_c - 3 B_e = \gamma_0 >0$ as in \eqref{discrete-alpha-0}. 
As a direct consequence, by taking $\alpha = \frac{\gamma_0}{2}$, 
the following inequality holds:
	\begin{equation}
\frac{1}{1 - C_{17} s} \nrm{\hat{e}^{k+1}}_{2}^2 
   + B_e  s \nrm{ \nabla_h \hat{e}^{k+1} }_2^2 
  \le  s\frac{C_{18}}{\gamma_0^3} \sum_{l=1}^{k}\nrm{e^{l}}_{2}^2  
  + \frac{C_1}{\alpha} s^3 \sum_{l=0}^{k}  \nrm{\nabla_h  \hat{e}^l  }_2^2 
  +s C_{13} \sum_{l=0}^{k}(h^4 + s^3)^2  ,   \label{converge-7}  
	\end{equation}
with $C_{17} := \frac{C_{16}}{\alpha^3}$ and 
$C_{18} := C_{16} + C_{14}  \alpha^2 $.  We can always choose $s$ 
with $1 - C_{17} s \ge \frac12$. In turn, by denoting 
\begin{eqnarray} 
   G^l = 2 \nrm{\hat{e}^l}_{2}^2 
   + B_e  s \nrm{ \nabla_h \hat{e}^l }_2^2   ,   \label{energy-alternate-1} 
\end{eqnarray} 
we get 
       \begin{equation}
   G^{k+1}  \le  2 s\frac{C_{18}}{\gamma_0^3} \sum_{l=1}^{k}  G^l 
  +s C_{13} \sum_{l=0}^{k}(h^4 + s^3)^2  ,  \label{converge-8}  
	\end{equation}
with the choice of $s$ so that $\frac{C_1 s}{\alpha} \le 2 B_e$. 
An application of the discrete Gronwall inequality yields the desired result: 
\begin{eqnarray} 
    \nrm{\hat{e}^{k+1} }_{2}^2   \le G^{k+1}  
  \le  C_{19}  \left( s^3 + h^4 \right)^2 ,   
  \quad \mbox{so that}   \, \, \, 
     \nrm{\hat{e}^{k+1} }_{2}  \le  \sqrt{C_{19}}  \left( s^3 + h^4 \right)  ,  
   \label{converge-9} 
\end{eqnarray} 
with $C_{19}$ independent on $s$ and $h$. A more detailed exploration 
implies the structure of this constant: $C_{19} = C_{11} e^{C_{12} t^{k+1}}$. 
As a result, the a-priori assumption (\ref{a priori-5}) is recovered at 
the time step $t^{k+1}$ so that an $O (s^4 + h^4)$ convergence in 
$\ell^2$ norm, between the numerical solution and the constructed 
approximate solution $\hat{\Phi}$, has been established, using an induction 
argument. 

  Finally, the proof of Theorem~\ref{thm:L2-converge} can be completed with 
the following application of triangle inequality: 
\begin{eqnarray} 
    \nrm{ e^l }_{2}   =  \nrm{ \phi^l - \Phi^l }_{2} 
    \le  \nrm{ \phi^l - \hat{\Phi}^l }_{2}  
   +   \nrm{ \Phi^l - \hat{\Phi}^l }_{2}  
   \le  C  \left(  s^2 + h^2  \right)  ,    \quad 
  \forall  l \cdot s  \le T ,  
   \label{converge-10} 
\end{eqnarray} 
in which the error estimate (\ref{converge-9}) and the analysis 
(\ref{converge-1}) for the constructed solution are used. 

\begin{rem}  \label{rem:converge-nAC-3}  
  The assumption (\ref{discrete-alpha-0}) (for the physical parameters) is required in the convergence analysis for the nCH equation. Such an assumption is necessary for the convex diffusion part to control the concave diffusion part, due to a subtle estimate (\ref{est-linear-2}). As a consequence of this inequality, the assumption $B_c > 3 B_e$ has to be made to make the convergence analysis pass through. In comparison, for the nAC equation, this assumption is not required, as explained in Remark~\ref{rem:converge-nAC-2}.   
  
  On the other hand, our extensive numerical experiments have implied that, such an assumption only corresponds to a technical difficulty in the convergence analysis. For most practical computational models, the second order convergence is well-preserved as long as the 
positive-diffusivity condition (\ref{gamma-0}) is valid.  
\end{rem}

\begin{rem}   \label{rem:converge-nAC-4}
  We note that the second order $\ell^\infty (0,T; \ell^2)$ convergence for the nCH equation is conditional, i.e., under a mild linear refinement constraint, $s \le C h$. In comparison, the $\ell^\infty (0,T; \ell^2)$ convergence for the nAC equation is unconditional, as explained in Remark~\ref{rem:converge-nAC-1}. 
  
  Such a subtle difference comes from the analysis techniques for the nonlinear inner products. For the nAC equation, the decomposition (\ref{converge-nAC-6}) has greatly facilitated the error estimates, and the maximum norm bound of the numerical solution is not needed in the derivation. However, for the nCH equation, since the discrete $H^1$ inner product of $\hat{e}^{k+1/2}$ and the nonlinear error function has to be analyzed, we need to make an a-priori assumption (\ref{a priori-5}) at the previous time step, obtain a discrete $W^{1,\infty}$ bound of the numerical solution, and the $\ell^\infty (0,T; \ell^2)$ convergence estimate justifies the a-priori assumption at the next time step. This process is further facilitated by the higher order consistency analysis presented in Section~\ref{sec-consistency}. 
\end{rem}

\subsection{Proof of Theorem~\ref{thm:converge-infty}: 
$\ell^\infty (0,T; \ell^\infty)$ convergence}

With the $O (s^3 + h^4)$ convergence result (\ref{converge-9}), 
in $\ell^2$ norm, we apply the inverse inequality and get 
\begin{eqnarray} 
   \nrm{ \hat{e}^{k+1} }_\infty  
   \le  \frac{ C \nrm{ \hat{e}^{k+1} }_2 }{ h^{\frac{d}{2} } }
  \le  \frac{ C \sqrt{C_{19}}  \left( s^3 + h^4 \right)  }{ h^{\frac{d}{2} } } 
  \le C_{20} \left( s^2 + h^2 \right) ,  \quad 
  \mbox{with $d=2$}  , \label{converge-infty-2} 
\end{eqnarray} 
with the linear refinement constraint $s \le C h$ and 
$C_{20} = C \sqrt{C_{19}}$. For the 3-D case, a higher order asymptotic 
expansion of the numerical solution has to be performed so that 
an $O (s^4 + h^4)$ consistency and convergence in $\ell^2$ norm 
are obtained. The details are left to interested readers. 

  Subsequently, by combining the $\ell^\infty$ error estimate 
(\ref{converge-infty-2}) and the analysis 
(\ref{converge-1}) for the constructed solution, we finish the proof of Theorem~\ref{thm:L2-converge} with an application of triangle inequality: 
\begin{eqnarray} 
    \nrm{ e^l }_\infty   =  \nrm{ \phi^l - \Phi^l }_\infty 
    \le  \nrm{ \phi^l - \hat{\Phi}^l }_\infty  
   +   \nrm{ \Phi^l - \hat{\Phi}^l }_\infty  
   \le  C  \left(  s^2 + h^2  \right)  ,    \quad 
  \forall  l \cdot s  \le T .  
   \label{converge-infty-3} 
\end{eqnarray} 

\subsection{The $\ell^\infty (0,T; \ell^\infty)$ convergence for the nAC equation} 

For the second order convex splitting scheme (\ref{full-disc-cons-flow-nAC}) for the nAC equation, the higher order consistency analysis could be performed in the same manner as in Section~\ref{sec-consistency}. In turn, an $\ell^\infty (0,T; \ell^2)$ convergence estimate with an improved order $O (s^3 + h^4)$ is expected, and an application of inverse inequality leads to a similar $\ell^\infty (0,T; \ell^\infty)$ convergence result as Theorem~\ref{thm:converge-infty}, under the linear refinement path constraint $s \le C h$. The proof of the following theorem is skipped for brevity, and the details are left to interested readers.  

	\begin{thm}\label{thm:convergence-inf-nAC}
Under the assumptions of Theorem~\ref{thm:L2-converge-nAC}, we also have optimal order convergence of the numerical solution of the scheme (\ref{full-disc-cons-flow-nAC}) in the $\ell^\infty$ norm. Namely, if $s$ and $h$ are sufficiently small with the linear refinement path constraint $s \le C h$, with $C$ any fixed constant, we have 
	\begin{equation}
\nrm{e^l}_\infty \le C\left(h^2+s^2 \right), 
	\label{converge-infty-nAC-0}
	\end{equation} 
where $C>0$ is independent of $h$ and $s$.
	\end{thm}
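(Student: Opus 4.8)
The plan is to follow exactly the three-stage template established for the nCH equation in Sections~\ref{sec-consistency}--\ref{sec-convergence}, but adapted to the simpler structure of the nAC scheme (\ref{full-disc-cons-flow-nAC}). The obstruction to a naive argument is that the optimal $\ell^2$ estimate (\ref{converge-nAC-0}) is only $O(h^2+s^2)$, so a direct application of the inverse inequality $\nrm{e^l}_\infty \le C \nrm{e^l}_2 / h^{d/2}$ in two dimensions ($d=2$) yields $O((h^2+s^2)/h)$, which loses an entire order and degrades to first-order accuracy. To recover an optimal maximum-norm bound I would first upgrade the $\ell^2$ convergence to order $O(s^3+h^4)$ by comparing $\phi$ not with $\Phi$ but with a constructed approximate solution, and only then invert.

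First I would carry out the higher order consistency analysis for the nAC scheme, mirroring (\ref{consistency-1})--(\ref{consistency-10}). One builds $\hat{\Phi} = \tilde{\Phi} + h^2 \Phi_{h,1} + s^2 \Phi_{s,1}$, where the spatial correction $\Phi_{h,1}$ and temporal correction $\Phi_{s,1}$ solve linearized evolution equations driven by the leading truncation coefficients $\g^{(0)}$ and its temporal analog. The only structural change is that the outer discrete Laplacian $\Dh$ appearing in the nCH correction equations (\ref{consistency-3}), (\ref{consistency-8}) is replaced by the scalar operator $-M(\,\cdot\,)$, so that, for instance, $\Phi_{h,1}$ solves $\partial_t \Phi_{h,1} = -M(3\tilde{\Phi}^2 \Phi_{h,1} + (\convone{J}{{\bf 1}}+\gamma_c-\gamma_e)\Phi_{h,1} - \convone{J}{\Phi_{h,1}}) - \g^{(0)}$. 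These remain linear (nonlocal, but linear) systems whose unique smooth solutions depend only on $\Phi$, with all divided differences bounded, and all $O(h^2)$ and $O(s^2)$ truncation contributions cancel after correction, leaving the $O(s^3+h^4)$ consistency analog of (\ref{consistency-10}).

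Next I would establish the improved $\ell^2$ estimate $\nrm{\hat{e}^l}_2 \le C(s^3+h^4)$ for $\hat{e}^k := \hat{\Phi}^k - \phi^k$. Crucially, the nonlinear error analysis follows the nAC decomposition (\ref{converge-nAC-6})--(\ref{converge-nAC-10}) rather than the $H^{-1}$ machinery of Proposition~\ref{estimate-nonlinear}: because the inner product is taken against $e^{k+1/2}$ (not $\Dh e^{k+1/2}$), the ``almost nonnegative'' term (\ref{converge-nAC-7}) absorbs the dangerous contributions and only the maximum norm of the smooth profile $\hat{\Phi}$ is needed, never that of $\phi$. Hence, exactly as noted in Remark~\ref{rem:converge-nAC-1}, no a priori $W^{1,\infty}$ bound on the numerical solution and no inverse inequality enter at this stage, and the resulting estimate of the form (\ref{converge-nAC-11}) with the new truncation $O(s^3+h^4)$ closes by the discrete Gronwall inequality \emph{unconditionally}. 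This is also why the technical assumption (\ref{discrete-alpha-0}) and any refinement constraint can still be avoided in the $\ell^2$ step.

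Finally I would pass to the maximum norm. Applying the two-dimensional inverse inequality gives $\nrm{\hat{e}^l}_\infty \le C\nrm{\hat{e}^l}_2 / h \le C(s^3+h^4)/h = C(s^3/h + h^3)$, and under the linear refinement path constraint $s \le Ch$ one has $s^3/h \le C s^2$ and $h^3 \le h^2$, so $\nrm{\hat{e}^l}_\infty \le C(s^2+h^2)$ --- this is the \emph{only} place where $s \le Ch$ is invoked, which explains why the $\ell^\infty$ result is conditional while the $\ell^2$ result of Theorem~\ref{thm:L2-converge-nAC} is not. Combining with the approximation bound $\nrm{\hat{\Phi}^l - \Phi^l}_\infty \le C(s^2+h^2)$ (the analog of (\ref{converge-1})) via the triangle inequality yields (\ref{converge-infty-nAC-0}). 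The main obstacle is not any single sharp estimate but the bookkeeping of the consistency construction: one must verify that replacing $\Dh$ by $-M(\,\cdot\,)$ still produces correction fields whose divided differences of all orders are bounded, and that the cancellation of the lower-order truncation terms is exact, so that the upgraded $O(s^3+h^4)$ consistency genuinely holds before the inverse inequality is applied.
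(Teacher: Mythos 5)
Your proposal matches the paper's intended argument exactly: the authors themselves only sketch this proof, stating that one performs the higher order consistency analysis of Section~\ref{sec-consistency} for the nAC scheme to obtain an $O(s^3+h^4)$ estimate in $\ell^2$, then applies the inverse inequality under $s \le Ch$ and concludes as in Theorem~\ref{thm:converge-infty}. Your write-up correctly fills in the details they omit (the correction equations with $\Delta_h$ replaced by $-M(\,\cdot\,)$, the reuse of the unconditional nAC nonlinear decomposition against $\hat{\Phi}$, and the final $s^3/h + h^3 \le C(s^2+h^2)$ computation), so it is the same approach, just more explicit.
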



	\section{Numerical results} \label{sec-numerical results}
In this section we present a few numerical experiments, verifying the convergence results of the second order schemes for the nCH and nAC equations. 
\subsection{Numerical convergence for the nCH equation}
Here we discuss the numerical results for the nCH equation. We present two cases, based on the restriction proposed in Eq.~\eqref{discrete-alpha-0}. These experiments verify the convergence rate in the $\ell^\infty(0,T;\ell^2)$ norm. We use a square domain $\Omega = (-0.5,0.5)^2$ with smooth, periodic initial data $0.5\sin(2\pi x_1)\cos(2 \pi x_2)$. The convolution kernel $\msfJ$ is taken to be 
	\begin{equation}\label{numerical_kernel}
\msfJ = \alpha \exp\left(-\frac{x_1^2+x_2^2}{\sigma^2}\right) , 
	\end{equation}
where $\sigma = 0.05$ and $\alpha = \frac{1}{\sigma^2}$. We extend $\msfJ$ periodically outside of $\Omega$. The other parameters are $\gamma_c =0$ and $\gamma_e = 1$ in the first case, which yields $\gamma_0 = \pi - 3 > 0$; and $\gamma_c =0$ and $\gamma_e = 2$ in the second case, which yields $\gamma_0 = \pi - 6 < 0$. The final time for the tests is given by $T=0.015625$. We take a linear refinement path $s = Ch$ with $C=0.1$, so that the global error is $O(h^2)$ in the $\ell^\infty(0,T;\ell^2)$ norm.  Since we do not have the exact solution -- these are not easily obtained for non-trivial convolution kernels -- we are using the difference between results on successive coarse and fine grids for the numerical comparison. The difference function, $e_{A}$, is evaluated at time $T=0.015625$ using the method described in~\cite{guan12a,hu09,wise10}.  The result is displayed in Tables~\ref{tab:error-1} and~\ref{tab:error-2}.  In both cases the global second-order accuracy of the method is confirmed when a linear refinement path is used.

	\begin{table}[htb]
	\centering	
	\begin{tabular}{|c|c|c|c|}
	\hline
coarse h &fine h& $\|e_{A}\|_2$ & rate
	\\
	\hline
1/128 & 1/256 & 0.003642747274850 & $-$
	\\
	\hline
1/256 & 1/512 & 0.000866930235764 & 2.071039102165065
	\\
	\hline
1/512 & 1/1024 &  0.000216260604309 & 2.003145023681790
	\\
	\hline
1/1024 & 1/2048 & 0.000054113342323 & 1.998714619254957
	\\
	\hline
	\end{tabular}
\caption{The difference between coarse and fine grids of the computed numerical solutions using a linear refinement path, with $\gamma_0 = \pi - 3 $.  The global second-order accuracy of the method is confirmed in the test.}
\label{tab:error-1}
	\end{table}

	\begin{table}[htb]
	\centering	
	\begin{tabular}{|c|c|c|c|}
	\hline
coarse h &fine h& $\|e_{A}\|_2$ & rate
	\\
	\hline
1/128 & 1/256 & 0.005355484518874 & $-$
	\\
	\hline
1/256 & 1/512 & 0.000483125827443 & 3.470546218670092
	\\
	\hline
1/512 & 1/1024 & 0.000139990250322 & 1.787072626339492
	\\
	\hline
1/1024 & 1/2048 & 0.000040099935792 & 1.803654521820036
	\\
	\hline
	\end{tabular}
\caption{The difference between coarse and fine grids of the computed numerical solutions using a linear refinement path, with $\gamma_0 = \pi - 6 $. The global second-order accuracy of the method is confirmed in the test.}
\label{tab:error-2}
	\end{table}

\subsection{Numerical convergence for the nAC equation}
Here we discuss the numerical results for the nAC equation. First we present the experiment verifying the numerical convergence rate. The setting of the experiment is the same as the nCH case, with $\gamma_c =0$ and $\gamma_e = 2$ which yields $\gamma_0 = \pi - 6 < 0$. The result is displayed in Tables~\ref{tab:error-3}.  The global second-order accuracy of the method is confirmed when a linear refinement path is used.

We also present experiments of phase separation described by the nAC equation under the following conditions: {\bf 1)} $\Omega = (-10,10)^2$; {\bf 2)} the size of time step is $s=0.01$, the number of nodes on grid is $512^2$ and the total number of time iterations is $10^4$; {\bf 3)} The convolution kernel $\msfJ$  is a function defined as the difference between two Gaussians:
\begin{equation}\label{kenerl:nuc-J}
 \msfJ = \alpha \exp\left(-\frac{x_1^2+x_2^2}{\sigma_1^2}\right) -\beta \exp\left(-\frac{x_1^2+x_2^2}{\sigma_2^2}\right) ,
\end{equation}
where $\sigma_1 = 0.16$, $\sigma_2 = 0.4$, $\alpha = \frac{0.1}{\sigma_1^2}$ and $\beta = \frac{0.08}{\sigma_2^2}$; {\bf 4)} $\gamma_e = 0$, $\gamma_c = 0.0$. The initial condition of the simulation is a random perturbation of the constant state $\phi_{ave}=0.0$. Figure~\ref{fig:evolve} shows snapshots of the evolution up to time T = 100, and Figure~\ref{fig:energy} shows the corresponding numerical energy for the simulation. The energy is observed to decay as time increases.

	\begin{table}[htb]
	\centering	
	\begin{tabular}{|c|c|c|c|}
	\hline
coarse h &fine h& $\|e_{A}\|_2$ & rate
	\\
	\hline
1/128 & 1/256 & 3.783500401280967e-05 & $-$
	\\
	\hline
1/256 & 1/512 & 9.458990514247017e-06 & 1.999963469064636
	\\
	\hline
1/512 & 1/1024 & 2.364753065037063e-06 & 1.999996683296409
	\\
	\hline
1/1024 & 1/2048 & 5.911879384151948e-07 & 2.000000800048265
	\\
	\hline
	\end{tabular}
\caption{The difference between coarse and fine grids of the computed numerical solutions using a linear refinement path, with $\gamma_0 = \pi - 6 $. The global second-order accuracy of the method is confirmed in the test.}
\label{tab:error-3}
	\end{table}

\begin{figure}
\centerline{\includegraphics[width=1.0\textwidth]{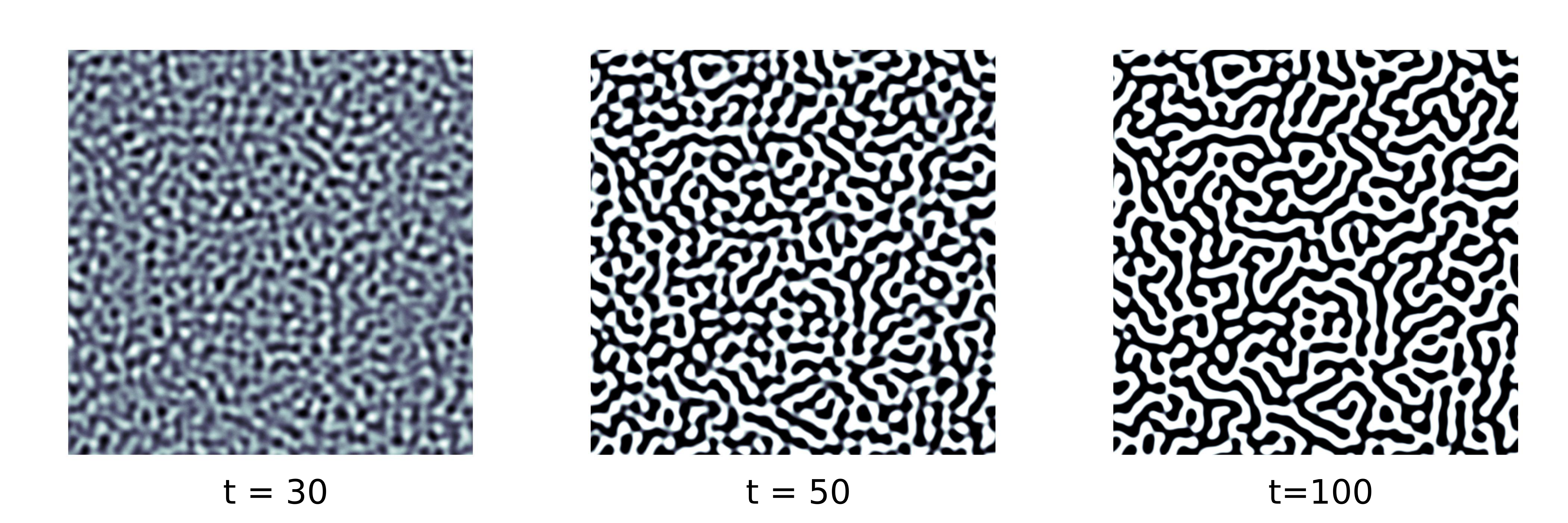}}
\caption{Phase separation described by nAC equation}
	\label{fig:evolve}
	\end{figure}	

\begin{figure}
\centerline{\includegraphics[width=0.50\textwidth]{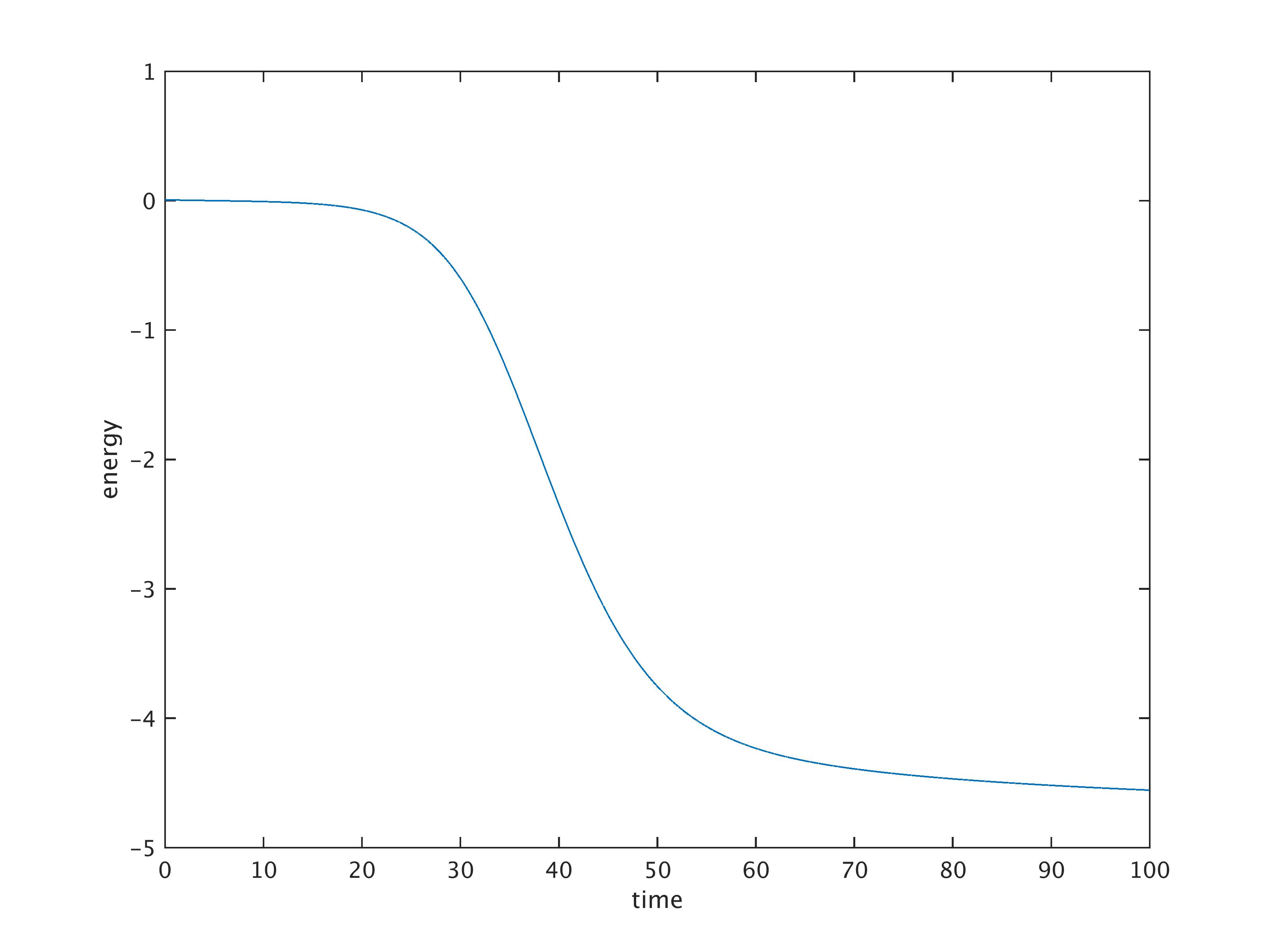}}
\caption{Energy evolution of phase separation}
	\label{fig:energy}
	\end{figure}
	
	\appendix

	\section{Finite difference discretization of space}
	\label{app-discrete-space}

Our primary goal in this appendix is to define some finite-difference operators and provide some summation-by-parts formulas in one and two space dimensions that are used to derive and analyze the numerical schemes. Everything extends straightforwardly to 3D.  We make extensive use of the notation and results for cell-centered functions from~\cite{wise09,wise10}.  The reader is directed to those references for more complete details.  

In 1D we will work on the interval $\Omega = (0,L)$, with $L = m\cdot h$, and in 2D, we work with the rectangle $\Omega = (0,L_1)\times(0,L_2)$, with $L_1 = m\cdot h$ and $L_2 = n\cdot h$, where $m$ and $n$ are positive integers and $h>0$ is the spatial step size.  Define $p_r := (r-\hf)\cdot h$, where $r$ takes on integer or half-integer values.   For any positive integer $\ell$, define $E_\ell = \left\{ p_r \ \middle|\ r=\frac{1}{2},\ldots, \ell+\frac{1}{2}\right\}$, $C_\ell = \left\{p_r \ \middle|\ r=1,\ldots, \ell\right\}$,  $C_{\overline{\ell}} = \left\{ p_r\cdot h\ \middle|\ r=0,\ldots, \ell+1\right\}$.   We need the 1D grid function spaces
	\begin{equation*}
{\mathcal C}_m = \left\{\phi: C_m\rightarrow \mathbb{R} \right\}, \ \  {\mathcal E}_m = \left\{u: E_m \rightarrow\mathbb{R} \right\},
	\end{equation*}
and the 2D grid function spaces
	\begin{align}
&{\mathcal C}_{m\times n} = \left\{\phi: C_m\times C_n \rightarrow \mathbb{R} \right\},  &{\mathcal V}_{m\times n} = \left\{f: E_m\times E_n \rightarrow\mathbb{R} \right\},
    	\nonumber
	\\
&{\mathcal E}^{\rm ew}_{m\times n} = \left\{u: E_m\times C_n \rightarrow\mathbb{R} \right\}, \ \  &{\mathcal E}^{\rm ns}_{m\times n} = \left\{v: C_m\times E_n \rightarrow\mathbb{R}  \right\} .
	\nonumber
	\end{align}

We use the notation $\phi_{i,j} := \phi\left(p_i,p_j\right)$ for \emph{cell-centered} functions, \emph{i.e.}, those in the space ${\mathcal C}_{m\times n}$. In component form \emph{east-west edge-centered} functions, \emph{i.e.}, those in the space ${\mathcal E}^{\rm ew}_{m\times n}$, are identified via $u_{i+\hf,j}:=u(p_{i+\hf},p_j)$.  In component form \emph{north-south edge-centered} functions, \emph{i.e.}, those in the space ${\mathcal E}^{\rm ns}_{m\times n}$,  are identified via $u_{i+\hf,j}:=u(p_{i+\hf},p_j)$. The functions of ${\mathcal V}_{m\times n}$ are called \emph{vertex-centered} functions.   In component form vertex-centered functions are identified via $f_{i+\hf,j+\hf}:=f(p_{i+\hf},p_{j+\hf})$.  The 1D cell-centered and edge-centered functions are easier to express. 

We will need the 1D grid inner-products $\cip{\, \cdot \,}{\, \cdot \,}$ and $\eip{\, \cdot \,}{\, \cdot \,}$ and the 2D grid inner-products $\ciptwo{\, \cdot \,}{\, \cdot \,}$, $\eipew{\, \cdot \,}{\, \cdot \,}$, $\eipns{\, \cdot \,}{\, \cdot \,}$ that are defined in~\cite{wise09,wise10}.

We shall say the cell-centered function $\phi\in {\mathcal C}_{m\times n}$ is periodic if and only if, for all $p,\, q\in\mathbb{Z}$,
	\begin{equation}
\phi_{i+ p\cdot m, j+ q\cdot n} = \phi_{i, j}  \quad i = 1,\ldots,m, \quad j = 1,\ldots, n .
	\end{equation}
Here we have abused notation a bit, since $\phi$ is not explicitly defined on an infinite grid.  Of course, $\phi$ can be extended as a periodic function in a perfectly natural way, which is the context in which we view the last definition. Similar definitions are implied for periodic edge-centered and vertex-centered grid functions.  The 1D and 3D cases are analogous and are suppressed.

The reader is referred to~\cite{wise09,wise10} for the precise definitions of the edge-to-center difference operators  $d_x : {\mathcal E}_{m\times n}^{\rm ew}\rightarrow{\mathcal C}_{m\times n}$ and $d_y : {\mathcal E}_{m\times n}^{\rm ns}\rightarrow{\mathcal C}_{m\times n}$; the $x-$dimension center-to-edge average and difference operators, respectively, $A_x,\, D_x: {\mathcal C}_{m\times n}\rightarrow{\mathcal E}_{m\times n}^{\rm ew}$;  the $y-$dimension center-to-edge average and difference operators, respectively, $A_y,\, D_y: {\mathcal C}_{m\times n}\rightarrow{\mathcal E}_{m\times n}^{\rm ns}$; and the standard 2D discrete Laplacian, $\Dh : {\mathcal C}_{m \times n}\rightarrow{\mathcal C}_{m\times n}$.  These operators have analogs in 1D and 3D that should be clear to the reader.

We will use the grid function norms defined in~\cite{wise09,wise10}.  The reader is referred to those references for the precise definitions of  $\nrm{\, \cdot \,}_2$, $\nrm{\, \cdot \,}_\infty$, $\nrm{\, \cdot \,}_p$ ($1\le p < \infty$), $\nrm{\, \cdot \,}_{0,2}$, $\nrm{\, \cdot \,}_{1,2}$, and $\nrm{\phi}_{2,2}$.  We will specifically use the following inverse inequality in 2D: for any $\phi\in{\mathcal C}_{m\times n}$ and all $1\le p < \infty$
	\begin{equation}
	\label{inverse-ineq}
\nrm{\phi}_{\infty}\le h^{-\frac{2}{p}}\nrm{\phi}_p .
	\end{equation}
Again, the analogous norms in 1D and 3D should be clear.

Using the definitions given in this appendix and in~\cite{wise09,wise10}, we obtain the following summation-by-parts formulas whose proofs are simple:
	\begin{prop}
	\label{sbp-2D-edge}
If $\phi\in{\mathcal C}_{m\times n}$ and $f\in{\mathcal E}_{m\times n}^{\rm ew}$ are periodic then
	\begin{equation}
h^2\, \eipew{D_x \phi}{f} = -h^2\, \ciptwo{\phi}{d_x f},
	\label{sbp-c-ew}
	\end{equation}
and if $\phi\in{\mathcal C}_{m\times n}$ and $f\in{\mathcal E}_{m\times n}^{\rm ns}$ are periodic then
	\begin{equation}
h^2\, \eipns{D_y\phi}{f} = -h^2\, \ciptwo{\phi}{d_y f} .
	\label{sbp-c-ns}
	\end{equation}
	\end{prop}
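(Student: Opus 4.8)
The plan is to establish the east-west identity \eqref{sbp-c-ew} by a discrete integration by parts (Abel summation) in the $x$-direction; the north-south identity \eqref{sbp-c-ns} then follows by the identical argument with the two coordinate directions interchanged, so I would only write out the first case in detail. The entire content is the adjointness (up to sign) of the center-to-edge operator $D_x$ and the edge-to-center operator $d_x$ under the respective grid inner products, with periodicity supplying the vanishing of boundary terms.

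First I would record the two operators in component form: for cell-centered $\phi$ the center-to-edge difference is $(D_x\phi)_{i+\hf,j} = h^{-1}(\phi_{i+1,j}-\phi_{i,j})$, and for east-west edge-centered $f$ the edge-to-center difference is $(d_x f)_{i,j} = h^{-1}(f_{i+\hf,j}-f_{i-\hf,j})$. Substituting the first of these into the definition of the edge inner product (which, consistent with the conventions used throughout, carries no intrinsic $h^2$ weight) and extracting one factor of $h$ gives
\begin{equation}
h^2\,\eipew{D_x\phi}{f} = h\sum_{i=1}^{m}\sum_{j=1}^{n}\left(\phi_{i+1,j}-\phi_{i,j}\right)f_{i+\hf,j}.
\end{equation}

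Next I would split the right-hand side into two sums and shift the index $i\mapsto i-1$ in the term carrying $\phi_{i+1,j}$. Because $\phi$ and $f$ are both periodic, the wrap-around term produced at one end of the range exactly supplies the term missing at the other, so the reindexing is exact over a full period $i=1,\dots,m$ and no boundary contribution survives. This collapses the expression to
\begin{equation}
h\sum_{i,j}\phi_{i,j}\left(f_{i-\hf,j}-f_{i+\hf,j}\right) = -h^2\sum_{i,j}\phi_{i,j}\,(d_x f)_{i,j} = -h^2\,\ciptwo{\phi}{d_x f},
\end{equation}
which is precisely \eqref{sbp-c-ew}.

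The computation is elementary and carries no analytic difficulty; the single point requiring care is the reindexing step, where one must confirm that the periodic extension of $\phi$ and $f$ makes the shifted summation over one period exact. I expect this bookkeeping, rather than any estimate, to be the only (minor) obstacle, and it is exactly what the text's remark that the proofs are ``simple'' refers to.
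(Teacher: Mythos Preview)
Your argument is correct and is exactly the standard Abel summation/index-shift that the paper has in mind; the paper itself omits the proof entirely, stating only that these summation-by-parts formulas have proofs that are ``simple,'' so your write-up supplies precisely the details the authors suppress.
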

      
	\begin{prop}
	\label{green1stthm-2d}
Let $\phi,\, \psi\in {\mathcal C}_{\overline{m}\times\overline{n}}$ be periodic grid functions.  Then
	\begin{equation}
h^2\, \eipew{D_x\phi}{D_x\psi} +h^2\, \eipns{D_y\phi}{D_y\psi} = -h^2\, \ciptwo{\phi}{\Dh\psi} .
	\end{equation}
	\end{prop}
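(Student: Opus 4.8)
The plan is to reduce this two-dimensional discrete Green's identity to the one-dimensional summation-by-parts formulas already recorded in Proposition~\ref{sbp-2D-edge}. The structural fact that drives the proof is that the standard five-point discrete Laplacian factors dimension-by-dimension as $\Dh = d_x D_x + d_y D_y$, where $D_x,\, D_y$ are the center-to-edge difference operators and $d_x,\, d_y$ are the edge-to-center difference operators recalled in the appendix. This factorization is immediate from the component definitions: for a cell-centered $\psi$ one has $\left(D_x \psi\right)_{i+\hf,j} = \left(\psi_{i+1,j}-\psi_{i,j}\right)/h$, an east-west edge-centered quantity, and applying $d_x$ returns the centered second difference $\left(\psi_{i+1,j}-2\psi_{i,j}+\psi_{i-1,j}\right)/h^2$; the $y$-direction is handled identically, and summing the two recovers $\Dh\psi$.

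First I would note that, since $\psi\in{\mathcal C}_{\overline{m}\times\overline{n}}$ is periodic, the functions $D_x\psi\in{\mathcal E}^{\rm ew}_{m\times n}$ and $D_y\psi\in{\mathcal E}^{\rm ns}_{m\times n}$ are themselves periodic edge-centered grid functions, so Proposition~\ref{sbp-2D-edge} applies directly with the choices $f=D_x\psi$ in \eqref{sbp-c-ew} and $f=D_y\psi$ in \eqref{sbp-c-ns}. This yields the two identities
\[
h^2\, \eipew{D_x\phi}{D_x\psi} = -h^2\, \ciptwo{\phi}{d_x D_x\psi}, \qquad h^2\, \eipns{D_y\phi}{D_y\psi} = -h^2\, \ciptwo{\phi}{d_y D_y\psi}.
\]
Adding these two equations and invoking the factorization $\Dh = d_x D_x + d_y D_y$ together with the bilinearity of the cell-centered inner product $\ciptwo{\,\cdot\,}{\,\cdot\,}$ gives
\[
h^2\, \eipew{D_x\phi}{D_x\psi} + h^2\, \eipns{D_y\phi}{D_y\psi} = -h^2\, \ciptwo{\phi}{d_x D_x\psi + d_y D_y\psi} = -h^2\, \ciptwo{\phi}{\Dh\psi},
\]
which is precisely the asserted identity.

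The only points requiring care are the verification of the factorization $\Dh = d_x D_x + d_y D_y$ and the matching of grid-function spaces, so that $D_x\psi$ is indeed a legitimate east-west edge function on which $d_x$ acts (and likewise in $y$); both are routine consequences of the operator definitions in~\cite{wise09,wise10}, so no genuine obstacle arises. Periodicity is what guarantees that the one-dimensional summation-by-parts formulas of Proposition~\ref{sbp-2D-edge} carry no boundary contributions, and hence that the identity holds exactly rather than up to boundary terms.
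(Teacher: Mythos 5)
Your proof is correct and follows exactly the intended route: the paper omits the proof as ``simple,'' and the standard argument is precisely your reduction via Proposition~\ref{sbp-2D-edge} with $f=D_x\psi$ and $f=D_y\psi$ combined with the factorization $\Dh = d_xD_x + d_yD_y$. The periodicity remark correctly identifies why no boundary terms appear, so nothing is missing.
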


	\begin{prop}
	\label{green2ndthm-2d}
Let $\phi,\, \psi\in {\mathcal C}_{m \times n}$ be periodic grid functions.  Then
	\begin{equation}
h^2\, \ciptwo{\phi}{\Dh\psi} = h^2\, \ciptwo{\Dh\phi}{\psi} .
	\end{equation} 
	\end{prop}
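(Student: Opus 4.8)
The plan is to derive this discrete Green's second identity directly from the discrete Green's first identity already established in Proposition~\ref{green1stthm-2d}, exploiting the symmetry of the edge-centered inner products. The key observation is that the right-hand side of Green's first identity is a symmetric bilinear form in its two arguments, so applying that identity with the two arguments in each order produces two expressions that must agree.

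First I would note that any periodic grid function in ${\mathcal C}_{m\times n}$ extends in a canonical way to a periodic function in the ghost-cell space ${\mathcal C}_{\overline{m}\times\overline{n}}$, so Proposition~\ref{green1stthm-2d} applies to periodic $\phi,\psi$. Applying it to the pair $(\phi,\psi)$ gives
\begin{equation}
h^2\, \eipew{D_x\phi}{D_x\psi} + h^2\, \eipns{D_y\phi}{D_y\psi} = -h^2\, \ciptwo{\phi}{\Dh\psi} ,
\end{equation}
while applying it to the swapped pair $(\psi,\phi)$ gives
\begin{equation}
h^2\, \eipew{D_x\psi}{D_x\phi} + h^2\, \eipns{D_y\psi}{D_y\phi} = -h^2\, \ciptwo{\psi}{\Dh\phi} .
\end{equation}

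Second, since $\eipew{\cdot}{\cdot}$ and $\eipns{\cdot}{\cdot}$ are genuine inner products on the respective edge spaces, they are symmetric, so the left-hand sides of the two displays coincide term by term. Consequently $h^2\,\ciptwo{\phi}{\Dh\psi} = h^2\,\ciptwo{\psi}{\Dh\phi}$, and the symmetry of the center inner product $\ciptwo{\cdot}{\cdot}$ then lets me rewrite the right-hand side as $h^2\,\ciptwo{\Dh\phi}{\psi}$, which is exactly the claimed identity.

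As a fallback that avoids invoking Green's first identity, I would instead expand $\Dh = d_x D_x + d_y D_y$ and apply the edge summation-by-parts formulas of Proposition~\ref{sbp-2D-edge} directly to $h^2\,\ciptwo{\phi}{\Dh\psi}$, transferring each divided difference off $\psi$ and onto $\phi$. Either way there is essentially no analytic obstacle: the entire content is the self-adjointness of the five-point Laplacian under periodic boundary conditions, which is a formal consequence of the symmetric structure already packaged in Proposition~\ref{green1stthm-2d}. The only point requiring a little care is the function-space bookkeeping — checking that the periodic extension places $\phi,\psi$ in the domain of the first-identity statement and that no boundary terms survive — but under periodicity every such term cancels, so this step is routine.
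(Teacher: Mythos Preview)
Your proposal is correct. The paper does not actually supply a proof of this proposition: it simply lists it among the summation-by-parts formulas ``whose proofs are simple'' and moves on. Your derivation from Proposition~\ref{green1stthm-2d} via the symmetry of the edge inner products is exactly the standard route and is presumably what the authors had in mind; your fallback via Proposition~\ref{sbp-2D-edge} is equally valid.
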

Analogs in 1D and 3D of the summation-by-parts formulas above are straightforward. 

	\section*{Acknowledgements} 
The authors greatly appreciate many helpful discussions with Steven M. Wise, in particular for his insightful suggestion and comments.  JSL acknowledges partial support from NSF-CHE 1035218, NSF-DMR 1105409 and NSF-DMS 1217273. CW acknowledges partial support from NSF-DMS 1418689 and NSFC 11271281.

	\bibliographystyle{plain}
	\bibliography{NonLocalCH_2nd.bib}

	\end{document}